\documentclass{amsart}

\usepackage{amsfonts,amssymb,amsmath}

\newtheorem{theorem}{Theorem} 
\newtheorem*{theorem*}{Theorem} 
\newtheorem{lemma}{Lemma} 
 
\newtheorem{proposition}{Proposition}

\title{Automorphisms of a Free Centre-by-Centre-by-Metabelian Group of Rank 3} 
\author{C. E. Kofinas} 
\address{University of the Aegean, Department of Mathematics, Karlovassi GR-83200, Samos, Greece}
\email{kkofinas@aegean.gr}

\begin{document}

\begin{abstract} 
Let $F_{3}$ be the free group of rank $3$ and let $G_{3} = F_{3}/[F_{3}^{\prime\prime}, F_{3}, F_{3}]$, that is, $G_{3}$ is a free centre-by-centre-by-metabelian group of rank $3$. We show that ${\rm Aut}(G_{3})$ contains a proper finitely generated subgroup that is dense with respect to the formal power series topology.

\bigskip

\noindent\emph{Keywords}: free centre-by-centre-by-metabelian groups, automorphisms of relatively free groups, formal power series topology, dense subgroups of automorphism groups.

\bigskip

\noindent Mathematics Subject Classification 2010: 20F28, 20E36. 
\end{abstract}

\maketitle

\section{Introduction}\label{sec1}

Let $F_{3}$ be the free group of rank $3$ freely generated by the set $\{f_{1}, f_{2}, f_{3}\}$. We write $G_{3} = F_{3}/[F_{3}^{\prime\prime}, F_{3}, F_{3}]$ and $x_{i} = f_{i}[F_{3}^{\prime\prime}, F_{3}, F_{3}]$ for $i = 1, 2, 3$. Thus, $G_{3}$ is a free centre-by-centre-by-metabelian group of rank $3$, freely generated by the set $\{x_{1}, x_{2}, x_{3}\}$. The natural mapping from $F_{3}$ onto $G_{3}$ induces a group homomorphism from ${\rm Aut}(F_{3})$ into ${\rm Aut}(G_{3})$. The image of this homomorphism is denoted by $T_{3}$, and the automorphisms of $G_{3}$ that belong to $T_{3}$ are called tame; otherwise they are called non-tame (or wild). 

It was shown in \cite{hur} that the intersection of the lower central series of $G_{3}$ is trivial. Hence, we may consider the topology on $G_{3}$ corresponding to the lower central series $G_{3} \supseteq \gamma_{2}(G_{3}) \supseteq \gamma_{3}(G_{3}) \supseteq \cdots$. For $c \geq 2$, the natural mapping from $G_{3}$ onto $G_{3}/\gamma_{c}(G_{3})$ induces a group homomorphism from ${\rm Aut}(G_{3})$ into ${\rm Aut}(G_{3}/\gamma_{c}(G_{3}))$. Write ${\rm I}_{c}{\rm A}_{G_{3}}$ for the kernel of this homomorphism. By \cite{and}, $\bigcap_{i \geq 2}{\rm I}_{i}{\rm A}_{G_{3}} = \{1\}$. Therefore, the topology corresponding to the lower central series induces a topology on ${\rm Aut}(G_{3})$ that corresponds to the series
\begin{equation*}
{\rm Aut}(G_{3}) \supset {\rm I}_{2}{\rm A}_{G_{3}} \supset {\rm I}_{3}{\rm A}_{G_{3}} \supset \ldots.
\end{equation*}
This topology is called the formal power series topology.

The motivation for studying ${\rm Aut}(G_{3})$ comes from recent progress on automorphism groups of relatively free groups. For a free centre-by-metabelian group of rank $3$, it was shown in \cite{kof1} that its automorphism group contains a finitely generated subgroup that is dense with respect to the formal power series topology. In rank $2$, it remains open whether a finitely generated dense subgroup exists. In rank $n \geq 4$, the automorphism group of a free centre-by-centre-by-metabelian group of rank $n$ is finitely generated (see \cite{kof2}) and hence the problem of finding a finitely generated dense subgroup is trivial. It is therefore natural to ask whether a finitely generated dense subgroup exists in ${\rm Aut}(G_{3})$. Our aim is to extend the corresponding result from \cite{kof1} for a free centre-by-metabelian group of rank $3$ and prove the following result.

\begin{theorem}\label{the}
Let $G_{3} = F_{3}/[F_{3}^{\prime\prime}, F_{3}, F_{3}]$ be a free centre-by-centre-by-metabelian group of rank $3$ freely generated by the set $\{x_{1}, x_{2}, x_{3}\}$. Let ${\rm Aut}(G_{3})$ be the automorphism group of $G_{3}$ and let $T_{3}$ be the group of tame automorphisms of $G_{3}$. Then, the subgroup generated by $T_{3}$ and the non-tame automorphisms $\mu$, $w$ and $\alpha$ defined by
\begin{equation*}
\begin{split}
\mu(x_{1}) &= x_{1}[x^{-1}_{1},[x_{1},[x_{2},x_{3}]]], \mu(x_{i}) = x_{i} \text{ for } i=2, 3,\\
w(x_{1}) &= x_{1}[[x_{1},x_{2}],[x_{1},x_{3}]], w(x_{i}) = x_{i} \text{ for } i=2, 3,\\
\alpha(x_{1}) &= x_{1}[[x_{1},x_{2}],[x_{1},x_{3}],x_{1}], \alpha(x_{i}) = x_{i} \text{ for } i=2, 3,
\end{split}
\end{equation*}
is a proper subgroup of ${\rm Aut}(G_{3})$ that is dense in ${\rm Aut}(G_{3})$ with respect to the formal power series topology.
\end{theorem}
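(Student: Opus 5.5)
Density in the formal power series topology amounts to showing that, for every $c \geq 2$, the image of $H = \langle T_{3}, \mu, w, \alpha \rangle$ in ${\rm Aut}(G_{3}/\gamma_{c}(G_{3}))$ coincides with the image of ${\rm Aut}(G_{3})$. Equivalently, ${\rm Aut}(G_{3}) = H \cdot {\rm I}_{c}{\rm A}_{G_{3}}$ for every $c$. I would argue by induction on $c$. For $c = 2$ the image of ${\rm Aut}(G_{3})$ is $GL_{3}(\mathbb{Z})$, which is already realised by $T_{3}$. For the inductive step I would use the standard filtration quotients ${\rm I}_{c}{\rm A}_{G_{3}}/{\rm I}_{c+1}{\rm A}_{G_{3}}$. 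Such a quotient embeds, via $\phi \mapsto (x_{i}^{-1}\phi(x_{i}))_{i}$, into $\bigoplus_{i=1}^{3} \gamma_{c}(G_{3})/\gamma_{c+1}(G_{3})$. This graded piece is $L_{c}$ of the relatively free Lie ring $L = L(F_{3})/\ldots$ corresponding to the variety $[\mathfrak{A}^{2},\mathfrak{E},\mathfrak{E}]$.

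It then suffices to show that every element of ${\rm I}_{c}{\rm A}_{G_{3}}$ agrees modulo ${\rm I}_{c+1}{\rm A}_{G_{3}}$ with an element of $H \cap {\rm I}_{c}{\rm A}_{G_{3}}$. I would identify the image of ${\rm I}_{c}{\rm A}$ with the space of derivations of degree $c-1$ of $L$ that lift to automorphisms. In the metabelian and centre-by-metabelian cases this image is described by a trace-type condition: the derivation is determined by a triple $(u_{1},u_{2},u_{3})$, and a criterion of Bachmuth/Morozov type characterises which triples are realisable.

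I would split $L_{c}$ according to the layers $L_{c} \cap \gamma_{2}(L)'$ (the metabelian part) versus the two central layers. On the metabelian part I would use known results on ${\rm Aut}$ of the free metabelian group of rank $3$ and of the free centre-by-metabelian group of rank $3$ (\cite{kof1}). There, tame automorphisms together with $\mu$-type (and $w$-type) automorphisms generate a dense subgroup. Conjugating by $GL_{3}(\mathbb{Z})$ and by IA-tame automorphisms, and taking commutators $[\phi, \tau]$ with $\tau \in {\rm I}_{2}{\rm A}\cap T_{3}$, I would raise degree and produce enough elements in each graded piece. The genuinely new contribution is the extra centre $[F''_{3},F_{3}]/[F''_{3},F_{3},F_{3}]$, whose lowest degree is $5$. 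I would show that $\alpha$, which has weight $5$, together with its conjugates and iterated commutators with tame IA-automorphisms, spans (as a $GL_{3}$-module, modulo the images of tame automorphisms and of $\mu, w$) the whole cokernel contributed by this layer in every degree $c \geq 5$.

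Properness I would prove by exhibiting an invariant. I would find a homomorphism from ${\rm Aut}(G_{3})$, or from a suitable subgroup of finite index containing $H$, to an abelian group which is finitely generated on $H$ but has non-finitely-generated image. For example, I would take the image of ${\rm I}_{c}{\rm A}$ in a graded quotient modulo tame automorphisms, summed over $c$: an infinite-dimensional space on which $H$ only reaches a finitely generated submodule. Alternatively, I would show that ${\rm Aut}(G_{3})$ is not finitely generated, as in rank $3$ for the centre-by-metabelian case, via a surjection onto a non-finitely-generated group. Then $H$, being finitely generated modulo $T_{3}$ (which is finitely generated), cannot be all of ${\rm Aut}(G_{3})$.

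The main obstacle I expect is the inductive step in the top central layer. It requires an explicit description of $[F''_{3},F_{3}]/[F''_{3},F_{3},F_{3}]$ as a module, presumably as a quotient of a tensor product via a homology-type computation. One then has to show that the commutators of $\alpha$ with tame IA-automorphisms generate it modulo the tame and lower-layer contributions. I would first attempt this with explicit basic commutators $[[x_{i},x_{j}],[x_{k},x_{l}],x_{m},\ldots]$, proceeding degree by degree.
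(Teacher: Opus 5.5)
There is a genuine gap, and it is the step you flag yourself. Your reduction to graded pieces is sound. So is your properness argument in its second form: ${\rm Aut}(G_{3})$ surjects onto ${\rm Aut}(M_{3})$, which is not finitely generated, while $\langle T_{3},\mu,w,\alpha\rangle$ is finitely generated; this is exactly the paper's argument.

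The problem is your claim that $\alpha$, its $T_{3}$-conjugates and its commutators with tame IA-automorphisms span the contribution of $[G_{3}^{\prime\prime},G_{3}]$ in every degree $c\geq 5$. That is not an auxiliary step; it is the whole new content of the theorem beyond \cite{kof1}, and you give no mechanism that works uniformly in $c$. Describing $[F_{3}^{\prime\prime},F_{3}]/[F_{3}^{\prime\prime},F_{3},F_{3}]$ as a module and checking basic commutators degree by degree is an infinite family of computations with nothing driving an induction.

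Two further points are asserted without support. First, you identify $\gamma_{c}(G_{3})/\gamma_{c+1}(G_{3})$ with a graded piece of a relatively free Lie ring; this is not automatic for such varieties, and you do not need it. Second, you treat the ``metabelian'' and ``central'' layers separately inside each graded piece. When you lift an approximation from ${\rm Aut}(C_{3})$, the error lands in the kernel $H_{3}$ of $\phi_{3}$, and you would still have to show that this error can be corrected inside your subgroup in the same degree.

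The paper avoids grading in the new layer altogether. Since $\phi_{3}$ is onto and $\phi_{3}(\langle T_{3},\mu,w\rangle)=\langle T_{C_{3}},\widehat\mu,\widehat w\rangle$ is dense, every $\theta\in{\rm I}_{k}{\rm A}_{G_{3}}$ factors as an element of ${\rm I}_{k}\langle T_{3},\mu,w\rangle$, times an element of $H_{3}\cap{\rm I}_{k}{\rm A}_{G_{3}}$, times an element of ${\rm I}_{k+1}{\rm A}_{G_{3}}$ (Proposition~\ref{propo4}). So it suffices to prove the \emph{exact} inclusion $H_{3}\subseteq\langle T_{3},w,\alpha\rangle$, which the paper obtains by showing $H_{3}=\langle\alpha,\alpha_{2,(0,0,0)}\rangle^{T_{3}}$:
\begin{itemize}
\item[(i)] Because $H_{3}$ is abelian and fixes $G_{3}^{\prime\prime}$, conjugation by $\sigma_{i,j}$, $\pi_{i,j}$, $k_{i}$ reduces $H_{3}$ to the normal closure of the $\alpha_{i,(m_{1},m_{2},m_{3})}$ with all $m_{j}\geq 0$.
\item[(ii)] The key relations eliminating $m_{2}$, $m_{3}$ and $m_{1}$ are imported from relations in ${\rm Aut}(C_{3})$ proved in \cite{kof1}. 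The import uses the transfer $\theta\mapsto(\theta)_{k}$ of Lemma~\ref{leem7}, which turns $x_{1}\mapsto x_{1}w$ (with $w\in G_{3}^{\prime\prime}$) into $x_{1}\mapsto x_{1}[w,x_{k}]$. This transfer is multiplicative, commutes with conjugation by $\tau_{i,j}$, and depends only on $\phi_{3}(\theta)$.
\item[(iii)] Finally, $\alpha_{3,(0,0,0)}=\sigma_{2,3}^{-1}\alpha_{2,(0,0,0)}^{-1}\sigma_{2,3}$ and $\alpha_{2,(0,0,0)}=wk_{2}^{-1}\pi_{3,2}w\pi_{3,2}^{-1}k_{2}$.
\end{itemize}
This transfer from ${\rm Aut}(C_{3})$, which gives an exact rather than degree-wise description of the kernel, is the idea your plan is missing.
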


Note that, by \cite{che}, the automorphism $\mu$ is non-tame, and by \cite{gule}, the automorphisms $w$ and $\alpha$ are non-tame.

It was proved in \cite{nie2} that ${\rm Aut}(F_{3})$ is finitely generated. Therefore, $T_{3}$ is also finitely generated. Thus, Theorem \ref{the} shows that ${\rm Aut}(G_{3})$ contains a dense subgroup generated by an explicit finite set.

\section{Preliminaries and Background}\label{sec2}

\subsection{Notation}

Let $G$ be a group. For $a, b \in G$, we write $a^{b} = b^{-1}ab$ and $[a, b] = a^{-1}b^{-1}ab$ for the commutator of $a$ and $b$. We adopt the left-normed convention for commutators: for $g_{1}, \ldots, g_{c} \in G$, with $c \geq 3$, we write $[g_{1}, \ldots, g_{c}] = [[g_{1}, \ldots, g_{c-1}], g_{c}]$. For subgroups $A$ and $B$ of $G$, we denote by $[A, B]$ the subgroup of $G$ generated by all commutators $[a, b]$, where $a \in A$ and $b \in B$. We adopt the left-normed convention for subgroups as well, so that for subgroups $A$, $B$ and $C$ of $G$, we write $[A, B, C] = [[A, B], C]$. For a subset $X$ of $G$, $\langle X \rangle$ denotes the subgroup of $G$ generated by $X$. For a subgroup $H$ of $G$, $\langle X \rangle^{H}$ denotes the subgroup of $G$ generated by all the elements $y^{-1}xy$, where $y \in H$ and $x \in X$.  We write $G^{\prime}$ for the derived subgroup of $G$, that is, $G^{\prime} = [G, G]$, and $G^{\prime\prime} = (G^{\prime})^{\prime}$. For a positive integer $c$, let $\gamma_{c}(G)$ be the $c$-th term of the lower central series of $G$. Note that $\gamma_{2}(G) = G^{\prime}$. We say that $G$ is residually nilpotent if $\bigcap_{i \geq 1}\gamma_{i}(G) = \{1\}$. 

Let ${\rm Aut}(G)$ be the automorphism group of $G$. For $c \geq 2$, the natural group epimorphism from $G$ onto $G/\gamma_{c}(G)$ induces a group homomorphism from ${\rm Aut}(G)$ into ${\rm Aut}(G/\gamma_{c}(G))$. We denote the kernel of this homomorphism by ${\rm I}_{c}{\rm A}_{G}$. The elements of ${\rm I}_{2}{\rm A}_{G}$, that is, the automorphisms of $G$ that induce the identity mapping on the abelianization of $G$, are called IA-automorphisms of $G$. It was proved in \cite[Theorem 1]{and} that the series
${\rm I}_{2}{\rm A}_{G} \supset {\rm I}_{3}{\rm A}_{G} \supset \ldots$ is central. Furthermore, if $G$ is residually nilpotent, then by \cite[Theorem 2]{and}, $\bigcap_{i \geq 2}{\rm I}_{i}{\rm A}_{G} = \{1\}$.

\subsection{Automorphisms of Relatively Free Groups}

Let $F_{n}$ be the free group of finite rank $n$, with $n \geq 2$, freely generated by the set $\{f_{1}, \ldots, f_{n}\}$.  Let $V$ be a variety of groups and let $V(F_{n})$ be the verbal subgroup of $F_{n}$ corresponding to $V$. Thus, the group $F_{n}(V) = F_{n}/V(F_{n})$ is a relatively free group of rank $n$ in $V$. For background on varieties of groups and relatively free groups, see \cite{hane}. The natural mapping from $F_{n}$ onto $F_{n}(V)$ induces a group homomorphism from ${\rm Aut}(F_{n})$ into ${\rm Aut}(F_{n}(V))$. The image of this homomorphism is denoted by $T_{F_{n}(V)}$, and the automorphisms of $F_{n}(V)$ which belong to $T_{F_{n}(V)}$ are called tame; otherwise they are called non-tame (or wild).

It is well known that ${\rm Aut}(F_{n})$ is finitely generated and a generating set has been given in \cite{nie2}. For $n = 3$, the Nielsen automorphisms $k_1$, $\pi_{1,2}$, $\sigma_{1,2}$ and $\tau$, defined by 
\begin{align*}
k_{1}(f_{1}) &= f^{-1}_{1},  k_{1}(f_{2}) = f_{2},  k_{1}(f_{3}) = f_{3}, \\
\pi_{1,2}(f_{1}) &= f_{1}f_{2}, \pi_{1,2}(f_{2}) = f_{2}, \pi_{1,2}(f_{3}) = f_{3}, \\
\sigma_{1,2}(f_{1}) &= f_{2},  \sigma_{1,2}(f_{2}) = f_{1},  \sigma_{1,2}(f_{3}) = f_{3}, \\
\tau(f_{1}) &= f_{2},  \tau(f_{2}) = f_{3},  \tau(f_{3}) = f_{1},
\end{align*}
form a generating set of ${\rm Aut}(F_{3})$. Since the tame automorphisms of $G_3$ are induced by automorphisms of $F_{3}$, the corresponding automorphisms of $G_3$, which we also denote by $k_{1}, \pi_{1,2}, \sigma_{1,2}, \tau$, generate $T_{3}$. For a generating set of $T_{3}$ consisting of only two elements, see \cite{new}.

Let $n$ be a positive integer, with $n \geq 2$. We write $M_{n} = F_{n}/F_{n}^{\prime\prime}$ and $z_{i} = f_{i}F_{n}^{\prime\prime}$ for $i = 1, \ldots, n$. Thus, $M_{n}$ is a free metabelian group of rank $n$ freely generated by the set $\{z_{1}, \ldots, z_{n}\}$. We write $C_{n} = F_{n}/[F_{n}^{\prime\prime}, F_{n}]$ and $y_{i} = f_{i}[F_{n}^{\prime\prime}, F_{n}]$ for $i = 1, \ldots, n$. Thus, $C_{n}$ is a free centre-by-metabelian group of rank $n$ freely generated by the set $\{y_{1}, \ldots, y_{n}\}$. Furthermore, we write $G_{n} = F_{n}/[F_{n}^{\prime\prime}, F_{n}, F_{n}]$ and $x_{i} = f_{i}[F_{n}^{\prime\prime}, F_{n}, F_{n}]$ for $i = 1, \ldots, n$. Thus, $G_{n}$ is a free centre-by-centre-by-metabelian group of rank $n$ freely generated by the set $\{x_{1}, \ldots, x_{n}\}$. Observe that $[G_{n}^{\prime\prime}, G_{n}] = [F_{n}^{\prime\prime}, F_{n}]/[F_{n}^{\prime\prime}, F_{n}, F_{n}]$. Therefore, $G_{n}/[G_{n}^{\prime\prime}, G_{n}] \cong C_{n}$. Moreover, by \cite[Theorem 6.1]{ngule}, $[G_{n}^{\prime\prime}, G_{n}]$ is the centre of $G_{n}$.

Note that for $n \neq 3$, all automorphisms of $M_{n}$ are tame (see \cite{bamu2}, \cite{rom}). For $n = 3$, ${\rm Aut}(M_{3})$ is not finitely generated (see \cite{bamu}). In particular, ${\rm Aut}(M_{3})$ contains non-tame automorphisms.

For the proof of the following result, see \cite[Lemma 3.1]{brgu}.

\begin{proposition}\label{propo1}
For integers $n, c \geq 2$, let $V(F_{n})$ be a fully invariant subgroup of $F_{n}$ such that $\gamma_{c+1}(F_{n}^{\prime}) \subseteq V(F_{n}) \subseteq F_{n}^{\prime\prime}$, and let $W_{n} = F_{n}/V(F_{n})$. Then, an endomorphism $\phi$ of $F_{n}$ induces an automorphism of $W_{n}$ if and only if $\phi$ induces an automorphism of the free metabelian group $M_{n}$.
\end{proposition}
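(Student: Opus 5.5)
The last statement is Proposition \ref{propo1}, which the paper attributes to \cite[Lemma 3.1]{brgu}. Here is how I would prove it directly. Fix $\phi$ and write $U = F_{n}^{\prime\prime}$. Since $V(F_{n})$ and $U$ are fully invariant, $\phi$ induces endomorphisms $\bar\phi$ of $W_{n}$ and $\phi_{M}$ of $M_{n}$. Moreover $W_{n}/(U/V(F_{n})) \cong M_{n}$, and $\phi_M$ is the map induced by $\bar\phi$ on this quotient.

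\emph{Necessity.} Suppose $\bar\phi$ is an automorphism. Then it maps $W_{n}^{\prime\prime} = U/V(F_{n})$ onto itself, so it induces a surjective endomorphism of $M_{n}$, namely $\phi_M$. Finitely generated metabelian groups are residually finite, hence Hopfian. So $\phi_M$ is also injective, and therefore an automorphism.

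\emph{Sufficiency.} Suppose $\phi_M$ is an automorphism. I first show $\bar\phi$ is surjective. Let $H = \bar\phi(W_{n})$. Then $H\,W_{n}^{\prime\prime} = W_{n}$, and $W_{n}^{\prime\prime} = \gamma_{2}(W_{n}^{\prime})$. Set $N = W_{n}^{\prime}$. Since $V(F_n)\supseteq\gamma_{c+1}(F_n')$, $N$ is nilpotent of class at most $c$. Put $K = H \cap N$. From $HN^{\prime} = W_{n}$ we get $N = N \cap H N^{\prime} = K N^{\prime}$, using $N^{\prime} \subseteq N$. A subgroup of a nilpotent group that supplements the derived subgroup is the whole group, so $K = N$. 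Hence $N \subseteq H$, and with $HN^{\prime} = W_{n}$ this gives $H = W_{n}$. So $\bar\phi$ is surjective.

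For injectivity I again want $W_{n}$ to be Hopfian. Relatively free groups of finite rank are Hopfian whenever they are residually finite. Here $W_{n}$ is an extension of a nilpotent group by a finitely generated metabelian group, and I expect it to be finitely generated and residually finite. The cleanest route avoids residual finiteness: $W_{n}$ is relatively free, so a surjective endomorphism sends the free generators to another generating $n$-tuple. A generating $n$-tuple of a relatively free group of rank $n$ that is Hopfian is a free basis. Finitely generated soluble-by-nilpotent groups of this type, being nilpotent-by-metabelian, satisfy max-n, since they are finitely generated abelian-by-polycyclic-ish (Hall). Hence they are Hopfian.

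The main obstacle is this Hopficity step. I would invoke P. Hall's theorem that finitely generated nilpotent-by-abelian groups are residually finite and satisfy max-n. $W_{n}$ has $W_{n}^{\prime}$ nilpotent, so it is nilpotent-by-abelian, and a finitely generated group satisfying max-n is Hopfian. This settles injectivity and completes the proof.
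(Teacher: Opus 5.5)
The paper gives no proof of this proposition; it cites \cite[Lemma 3.1]{brgu}, so your argument has to stand on its own. Your necessity direction is fine. Your surjectivity argument in the sufficiency direction is also correct. The Dedekind-law step holds, a subgroup of a nilpotent group that supplements the derived subgroup is the whole group, and $W_{n}^{\prime}$ has class at most $c$ because $\gamma_{c+1}(F_{n}^{\prime}) \subseteq V(F_{n})$.

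The gap is injectivity, and it is genuine. The theorem you invoke is false: finitely generated nilpotent-by-abelian groups need not satisfy max-n, nor be residually finite. P. Hall's own examples are $2$-generator centre-by-metabelian groups whose centre contains an arbitrary countable abelian group, for instance $\mathbb{Q}$ or a free abelian group of infinite rank.

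The failure already occurs inside the scope of the proposition. Take $V(F_{n}) = [F_{n}^{\prime\prime}, F_{n}]$, which is allowed with $c = 2$, so that $W_{n} = C_{n}$. Then $F_{n}^{\prime\prime}/[F_{n}^{\prime\prime}, F_{n}]$ is a central subgroup that is not finitely generated, so $C_{n}$ does not satisfy max-n. Your intermediate remark that a generating $n$-tuple of a Hopfian relatively free group is a free basis also does not help, since it presupposes the Hopficity you are trying to prove.

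The missing idea is that no Hopficity is needed, because relative freeness supplies a right inverse. Since you have shown $\bar\phi$ is surjective, choose $u_{i} \in F_{n}$ with $\bar\phi(u_{i}V(F_{n})) = f_{i}V(F_{n})$ for $i = 1, \ldots, n$. Let $\psi$ be the endomorphism of $F_{n}$ with $\psi(f_{i}) = u_{i}$.

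\begin{itemize}
\item Then $\bar\phi\bar\psi$ is the identity on $W_{n}$, so $\bar\psi$ is injective.
\item Passing to $M_{n}$ gives $\phi_{M}\psi_{M} = 1$, so $\psi_{M} = \phi_{M}^{-1}$ is an automorphism of $M_{n}$.
\item Your supplement argument, applied to $\psi$ instead of $\phi$, shows that $\bar\psi$ is surjective.
\end{itemize}

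Hence $\bar\psi$ is an automorphism of $W_{n}$, and $\bar\phi = \bar\psi^{-1}$ is one as well. This completes the sufficiency direction without any appeal to max-n or residual finiteness.
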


Let $n \geq 2$.  Since $G_{n}/[G_{n}^{\prime\prime}, G_{n}] \cong C_{n}$, the natural mapping from $G_{n}$ onto $C_{n}$ induces a group homomorphism from ${\rm Aut}(G_{n})$ into ${\rm Aut}(C_{n})$. Similarly, since $C_{n}/C_{n}^{\prime\prime} \cong M_{n}$, the natural mapping from $C_{n}$ onto $M_{n}$ induces a group homomorphism from ${\rm Aut}(C_{n})$ into ${\rm Aut}(M_{n})$. By Proposition \ref{propo1}, we get the following result.

\begin{lemma}\label{leem1}
For an integer $n \geq 2$, let $M_{n}$ be a free metabelian group of rank $n$, let $C_{n}$ be a free centre-by-metabelian group of rank $n$ and let $G_{n}$ be a free centre-by-centre-by-metabelian group of rank $n$. Then:
\begin{enumerate}
\item An endomorphism of $G_{n}$ is an automorphism if and only if it induces an automorphism of $M_{n}$.

\item The group homomorphism from ${\rm Aut}(G_{n})$ into ${\rm Aut}(C_{n})$, induced from the natural mapping from $G_{n}$ onto $C_{n}$ is surjective. 

\item The group homomorphism from ${\rm Aut}(C_{n})$ into ${\rm Aut}(M_{n})$, induced from the natural mapping from $C_{n}$ onto $M_{n}$ is surjective.
\end{enumerate}
\end{lemma}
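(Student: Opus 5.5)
We derive all three parts of Lemma \ref{leem1} from Proposition \ref{propo1}. Two facts are needed to apply it: each of the relevant verbal subgroups lies between some $\gamma_{c+1}(F_n')$ and $F_n''$, and every endomorphism of a relatively free group lifts to an endomorphism of $F_n$.

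For the sandwich, take $V(F_n) = [F_n'', F_n, F_n]$ for $G_n$ and $V(F_n) = [F_n'', F_n]$ for $C_n$. Both are fully invariant and contained in $F_n''$. Since $F_n'' = [F_n', F_n'] = \gamma_2(F_n')$, we get
\begin{equation*}
\gamma_4(F_n') = [\gamma_2(F_n'), F_n', F_n'] \subseteq [F_n'', F_n, F_n] \subseteq [F_n'', F_n] .
\end{equation*}
So Proposition \ref{propo1} applies with $c = 3$ to both $W_n = G_n$ and $W_n = C_n$.

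\emph{Part (1).} Let $\psi$ be an endomorphism of $G_n$. Because $G_n$ is relatively free on $x_1, \dots, x_n$, we can choose words $u_i \in F_n$ with $\psi(x_i) = u_i V(F_n)$. Let $\phi$ be the endomorphism of $F_n$ with $f_i \mapsto u_i$. Then $\phi$ induces $\psi$ on $G_n$, and $\phi$ also induces the endomorphism $\bar\psi$ of $M_n$ that $\psi$ determines on $G_n/G_n'' \cong M_n$. Proposition \ref{propo1} says $\phi$ induces an automorphism of $G_n$ if and only if it induces one of $M_n$. This is exactly the statement that $\psi$ is an automorphism if and only if $\bar\psi$ is.

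\emph{Parts (2) and (3).} We prove surjectivity by lifting. For (2), let $\theta \in {\rm Aut}(C_n)$. Lift it to an endomorphism $\phi$ of $F_n$ as above. By Proposition \ref{propo1} applied to $C_n$, the map $\phi$ induces an automorphism of $M_n$. By Proposition \ref{propo1} applied to $G_n$, the same $\phi$ then induces an automorphism $\psi$ of $G_n$. Both $\psi$ and $\theta$ are induced by $\phi$, so the image of $\psi$ under ${\rm Aut}(G_n) \to {\rm Aut}(C_n)$ agrees with $\theta$ on the generators $y_i$. Hence it equals $\theta$.

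Part (3) is the same argument one level down. Given $\beta \in {\rm Aut}(M_n)$, lift it to $\phi$ on $F_n$. Since $\phi$ induces the automorphism $\beta$ of $M_n$, Proposition \ref{propo1} gives an automorphism of $C_n$ induced by $\phi$, and this maps to $\beta$.

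The only step needing care is the containment $\gamma_{c+1}(F_n') \subseteq V(F_n)$. It reduces to the identity $[F_n'',F_n,F_n] \supseteq [\gamma_2(F_n'),F_n',F_n'] = \gamma_4(F_n')$, which is immediate. The rest is bookkeeping showing that the induced maps are compatible with the natural projections $G_n \to C_n \to M_n$, and that compatibility holds because all three maps come from the single endomorphism $\phi$ of $F_n$.
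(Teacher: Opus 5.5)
Your proposal is correct and takes the same route as the paper, which derives all three parts directly from Proposition~\ref{propo1}. You fill in the checks the paper leaves implicit: the containments $\gamma_4(F_n') \subseteq [F_n'',F_n,F_n] \subseteq [F_n'',F_n] \subseteq F_n''$, so the proposition applies with $c=3$, and the lifting of endomorphisms of the relatively free groups to a single endomorphism of $F_n$ that induces compatible maps on $G_n$, $C_n$ and $M_n$.
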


The group ${\rm Aut}(C_{n})$ has been studied in \cite{sto}. For $2 \leq n \leq 3$, it was proved in \cite[Theorem]{sto} that ${\rm Aut}(C_{n})$ is not finitely generated. Furthermore, a minimal (infinite) set of generators of ${\rm Aut}(C_{2})$ has been given in \cite{kof1}. For $n \geq 4$, it was proved in \cite[Theorem]{sto} that ${\rm Aut}(C_{n})$ is generated by the tame automorphisms and one more non-tame automorphism (the non-tameness of this automorphism follows from \cite{gule}). Note that this result has been generalized in \cite{pap}.
 
Let $2 \leq n \leq 3$. Since, by Lemma \ref{leem1} (2), there is a group epimorphism from ${\rm Aut}(G_{n})$ onto ${\rm Aut}(C_{n})$, and ${\rm Aut}(C_{n})$ is not finitely generated, it follows that ${\rm Aut}(G_{n})$ is not finitely generated. For $n \geq 4$, using the work of \cite{pap}, it was proved in \cite[Proposition 4]{kof2} that ${\rm Aut}(G_{n})$ is generated by the tame automorphisms and one more non-tame automorphism.

\subsection{The Formal Power Series Topology}

For our terminology and notation, we follow \cite{dp} (see also \cite{bd}). Let $G$ be a relatively free residually nilpotent group of finite rank $n$, with $n \geq 2$. For an integer $k \geq 2$ and for a subgroup $N$ of ${\rm Aut}(G)$, we write ${\rm I}_{k}N = N \cap {\rm I}_{k}{\rm A}_{G}$ (that is, ${\rm I}_{k}N$ is the set of elements of $N$ that induce the identity mapping on $G/\gamma_{k}(G)$). Note that ${\rm I}_{k}N$ is a normal subgroup of ${\rm I}_{2}N$. The following result is a special case of \cite[Proposition 2.1]{dp}.

\begin{proposition}\label{propo2}
For a positive integer $n \geq 2$, let $G$ be a relatively free residually nilpotent group of finite rank $n$. Let $N$ be a subgroup of ${\rm Aut}(G)$ such that $T_{G} \subseteq N$. Then, $N$ is dense in ${\rm Aut}(G)$ with respect to the formal power series topology, if and only if ${\rm I}_{k}{\rm A}_{G} = ({\rm I}_{k}N){\rm I}_{k+1}{\rm A}_{G}$ for all $k \geq 2$.
\end{proposition}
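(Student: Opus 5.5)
We prove Proposition \ref{propo2} by first restating density in terms of cosets. A basic neighbourhood of $\phi \in {\rm Aut}(G)$ in the formal power series topology is $\phi\,{\rm I}_{k}{\rm A}_{G}$ for some $k \geq 2$. Each ${\rm I}_{k}{\rm A}_{G}$ is the kernel of a homomorphism, hence normal in ${\rm Aut}(G)$. So $N$ is dense if and only if every such coset meets $N$, that is,
\begin{equation*}
{\rm Aut}(G) = N\,{\rm I}_{k}{\rm A}_{G} \quad \text{for all } k \geq 2. \qquad (\ast)
\end{equation*}
The whole argument consists of comparing $(\ast)$ with the condition ${\rm I}_{k}{\rm A}_{G} = ({\rm I}_{k}N){\rm I}_{k+1}{\rm A}_{G}$ for all $k \geq 2$.

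\emph{Density implies the condition.} Suppose $N$ is dense, and take $\phi \in {\rm I}_{k}{\rm A}_{G}$. By $(\ast)$ for $k+1$ there is $\nu \in N$ with $\nu^{-1}\phi \in {\rm I}_{k+1}{\rm A}_{G}$. Then
\begin{equation*}
\nu \in \phi\,{\rm I}_{k+1}{\rm A}_{G} \subseteq {\rm I}_{k}{\rm A}_{G},
\end{equation*}
so $\nu \in N \cap {\rm I}_{k}{\rm A}_{G} = {\rm I}_{k}N$. Hence $\phi = \nu(\nu^{-1}\phi) \in ({\rm I}_{k}N){\rm I}_{k+1}{\rm A}_{G}$. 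The reverse inclusion holds trivially, so the condition follows.

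\emph{The condition implies density.} We prove $(\ast)$ by induction on $k$.

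For the base case $k=2$ we need ${\rm Aut}(G) = N\,{\rm I}_{2}{\rm A}_{G}$. The quotient ${\rm Aut}(G)/{\rm I}_{2}{\rm A}_{G}$ embeds into ${\rm Aut}(G/\gamma_{2}(G)) \cong GL_{n}(\mathbb{Z})$. The tame automorphisms already map onto $GL_{n}(\mathbb{Z})$: the images of the Nielsen automorphisms are sign changes, permutation matrices and elementary transvections, and these generate $GL_{n}(\mathbb{Z})$. Therefore
\begin{equation*}
{\rm Aut}(G) = T_{G}\,{\rm I}_{2}{\rm A}_{G} \subseteq N\,{\rm I}_{2}{\rm A}_{G},
\end{equation*}
using the hypothesis $T_{G} \subseteq N$. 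This base case is the only place where that hypothesis enters, and it is the one step needing an outside fact, namely that $GL_{n}(\mathbb{Z})$ is generated by the images of the Nielsen generators. I expect this to be the main, though mild, obstacle.

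For the inductive step, assume ${\rm Aut}(G) = N\,{\rm I}_{k}{\rm A}_{G}$ and take $\phi \in {\rm Aut}(G)$. Write $\phi = \nu\psi$ with $\nu \in N$ and $\psi \in {\rm I}_{k}{\rm A}_{G}$. By hypothesis $\psi = \nu'\chi$ with $\nu' \in {\rm I}_{k}N \subseteq N$ and $\chi \in {\rm I}_{k+1}{\rm A}_{G}$. Then
\begin{equation*}
\phi = (\nu\nu')\chi \in N\,{\rm I}_{k+1}{\rm A}_{G},
\end{equation*}
which is $(\ast)$ for $k+1$. This completes the induction, so $N$ is dense.
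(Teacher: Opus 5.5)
Your proof is correct. The paper gives no proof of its own for this statement. It quotes it as a special case of \cite[Proposition 2.1]{dp}, and what you have written is a self-contained version of the standard argument.

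Your steps are as follows. Density is equivalent to ${\rm Aut}(G) = N\,{\rm I}_{k}{\rm A}_{G}$ for all $k \geq 2$, because the ${\rm I}_{k}{\rm A}_{G}$ are normal and their cosets form a basis. The filtration condition then follows by approximating $\phi \in {\rm I}_{k}{\rm A}_{G}$ at level $k+1$. Conversely, you climb the filtration by induction, with $T_{G} \subseteq N$ entering only at $k = 2$. Residual nilpotency plays no role in this equivalence; it only makes the topology Hausdorff.

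You should state explicitly the hypothesis behind ${\rm Aut}(G/\gamma_{2}(G)) \cong GL_{n}(\mathbb{Z})$: that $G/\gamma_{2}(G)$ is free abelian of rank $n$, equivalently $V(F_{n}) \subseteq F_{n}^{\prime}$. This holds for $M_{3}$, $C_{3}$ and $G_{3}$, the only groups to which the paper applies the proposition.

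The hypothesis cannot be dropped. Take $G = F_{n}/F_{n}^{\prime}F_{n}^{5} \cong (\mathbb{Z}/5\mathbb{Z})^{n}$, which is relatively free and trivially residually nilpotent. Every ${\rm I}_{k}{\rm A}_{G}$ is trivial, so $N = T_{G}$ satisfies the filtration condition. But $T_{G}$ is the set of matrices of determinant $\pm 1$ in $GL_{n}(\mathbb{Z}/5\mathbb{Z}) = {\rm Aut}(G)$ and misses ${\rm diag}(2, 1, \ldots, 1)$, so it is not dense.

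With that hypothesis made explicit, your base case ${\rm Aut}(G) = T_{G}\,{\rm I}_{2}{\rm A}_{G}$ is justified and the argument is complete.
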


Note that $M_{3}$ is residually nilpotent by \cite{gru}. Hurley \cite[Proposition 7.11 and its Corollary]{hur} claimed that the quotient $F_{n}^{\prime\prime}/[F_{n}^{\prime\prime}, F_{n}]$ is free abelian for all $n \geq 2$. Using this, he constructed a faithful embedding of $C_{n}$ into the unit group of a suitable graded power-series ring over $\mathbb{Z}$. From this embedding, he deduced that $C_{n}$ is residually nilpotent (in fact, residually torsion-free nilpotent). For $n\geq 4$, this argument fails. Gupta \cite{gup} exhibited a non-trivial fully invariant elementary abelian $2$-subgroup in the centre of $C_{n}$. This torsion makes Hurley's embedding non-faithful. However, as explicitly observed in \cite[Remark 2]{gup}, $F_{3}^{\prime\prime}/[F_{3}^{\prime\prime}, F_{3}]$ is free abelian, $C_{3}$ admits Hurley's embedding and is residually nilpotent. The construction of $G_{n}$ is inductive and uses only the fact that $F_{n}^{\prime\prime}/[F_{n}^{\prime\prime}, F_{n}]$ is free abelian (see \cite[Lemma 8.1, Proposition 8.2 and Theorem 8.3]{hur}). Although this fails for $n \geq 4$, it is true for $n = 3$. Hence the faithful embedding of $G_{3}$ (\cite[Theorem 8.3]{hur}) and the resulting residual nilpotency of $G_{3}$ (\cite[Theorem 8.4]{hur}) remain valid. Therefore, ${\rm Aut}(M_{3})$, ${\rm Aut}(C_{3})$ and ${\rm Aut}(G_{3})$ all admit the formal power series topology induced by their respective lower central series.

\section{A Dense Subgroup of ${\rm Aut}(G_{3})$}

Throughout the paper, the term ``dense'' refers to density with respect to the formal power series topology. It follows from \cite{che} that there is a subgroup of ${\rm Aut}(M_{3})$ generated by the tame automorphisms and one more non-tame automorphism of $M_{3}$ that is dense in ${\rm Aut}(M_{3})$. Using this result for ${\rm Aut}(M_{3})$, the following result was proved in \cite[Theorem 3]{kof1}.

\begin{proposition}\label{propo3}
Let $C_{3} = F_{3}/[F_{3}^{\prime\prime}, F_{3}]$ be a free centre-by-metabelian group of rank $3$ freely generated by the set $\{y_{1}, y_{2}, y_{3}\}$. Then, the subgroup of ${\rm Aut}(C_{3})$ generated by $T_{C_{3}}$ and the automorphisms $\widehat\mu$ and $\widehat w$ defined by 
\begin{equation*}
\begin{split}
\widehat\mu(y_{1}) &= y_{1}[y^{-1}_{1}, [y_{1}, [y_{2}, y_{3}]]], \widehat\mu(y_{i}) = y_{i}, \text{ for } i = 2, 3, \\
\widehat w(y_{1}) &= y_{1}[[y_{1}, y_{2}], [y_{1}, y_{3}]], \widehat w(y_{i}) = y_{i}, \text{ for } i = 2, 3,
\end{split}
\end{equation*}
is dense in ${\rm Aut}(C_{3})$, with respect to the formal power series topology.
\end{proposition}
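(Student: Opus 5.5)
We would prove this as a lifting argument from ${\rm Aut}(M_{3})$, with the density criterion of Proposition \ref{propo2} as the organising tool. Let $N$ be the subgroup of ${\rm Aut}(C_{3})$ generated by $T_{C_{3}}$, $\widehat\mu$ and $\widehat w$. By Proposition \ref{propo2} it is enough to show ${\rm I}_{k}{\rm A}_{C_{3}} = ({\rm I}_{k}N){\rm I}_{k+1}{\rm A}_{C_{3}}$ for every $k \geq 2$. Let $\rho \colon {\rm Aut}(C_{3}) \to {\rm Aut}(M_{3})$ be the natural map. It is surjective by Lemma \ref{leem1}(3). Its kernel $K$ consists of the automorphisms $y_{i} \mapsto y_{i}u_{i}$ with $u_{i} \in C_{3}^{\prime\prime}$. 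Since $C_{3}^{\prime\prime}$ is central, $K$ is abelian and $K \cong (C_{3}^{\prime\prime})^{3}$. Because $F_{3}^{\prime\prime}/[F_{3}^{\prime\prime},F_{3}]$ is free abelian, $K$ is a graded free abelian group, where degree means lower-central degree, starting at $4$. The image $\rho(\widehat\mu)$ is Chein's non-tame automorphism of $M_{3}$, so $\rho(N)$ is dense in ${\rm Aut}(M_{3})$ by \cite{che}. Proposition \ref{propo2} for $M_{3}$ then gives ${\rm I}_{k}{\rm A}_{M_{3}} = ({\rm I}_{k}\rho(N)){\rm I}_{k+1}{\rm A}_{M_{3}}$.

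\emph{Step 1 (the kernel).} We first show that for each $d \geq 4$ the degree-$d$ part of $K$ (that is, $K \cap {\rm I}_{d}{\rm A}_{C_{3}}$ modulo ${\rm I}_{d+1}{\rm A}_{C_{3}}$) is contained in $N\,{\rm I}_{d+1}{\rm A}_{C_{3}}$. Modulo $[C_{3}^{\prime\prime},C_{3}]$, the degree-$d$ component of $C_{3}^{\prime\prime}$ is spanned by the elements $[[y_{a},y_{b},y_{c_{1}},\ldots],[y_{e},y_{f},y_{g_{1}},\ldots]]$. We use three operations.
\begin{itemize}
\item Conjugating $\widehat w$ by permutations and inversions of generators produces the generators $y_{i}\mapsto y_{i}[[y_{i},y_{j}],[y_{i},y_{l}]]$.
\item Conjugating by $\pi_{i,j}$-type tame automorphisms and multiplying out linearizes these, giving all degree-$4$ generators $y_{i}\mapsto y_{i}[[y_{a},y_{b}],[y_{c},y_{e}]]$. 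Central elements are additive, so bilinearity in the entries is available.
\item Taking commutators of such a kernel element $\kappa$ with tame IA-automorphisms $\theta$ raises the degree. The commutator $[\kappa,\theta]$ lies in $K$, and its leading term is $\kappa$'s value with $\theta$ applied to the entries, minus the original.
\end{itemize}
By induction on $d$ these operations should produce every degree-$d$ generator. The bookkeeping is the $\mathbb{Z}GL_{3}$-module structure of $C_{3}^{\prime\prime}/[C_{3}^{\prime\prime},C_{3}]$, for which we would rely on Hurley's graded description of this quotient.

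\emph{Step 2 (lifting).} Let $\phi \in {\rm I}_{k}{\rm A}_{C_{3}}$. Then $\rho(\phi) \in {\rm I}_{k}{\rm A}_{M_{3}}$, so $\rho(\phi) = \rho(\psi)\eta$ with $\psi \in N$, $\rho(\psi) \in {\rm I}_{k}\rho(N)$ and $\eta \in {\rm I}_{k+1}{\rm A}_{M_{3}}$. The automorphism $\psi$ only acts trivially on $C_{3}/\gamma_{k}(C_{3})C_{3}^{\prime\prime}$. Its defect on the generators lies in $C_{3}^{\prime\prime}$ and may have degree below $k$ (possible only when $k > 4$). We remove these defects degree by degree. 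If the lowest defect has degree $d<k$, the defect map is an element of the degree-$d$ part of $K$. By Step 1 we multiply $\psi$ by an element of $N$ agreeing with its inverse modulo ${\rm I}_{d+1}{\rm A}_{C_{3}}$. This preserves the image in ${\rm Aut}(M_{3})$ modulo ${\rm I}_{k}$, and after finitely many steps $\psi \in {\rm I}_{k}N$. Then $\psi^{-1}\phi$ lies in ${\rm I}_{k}{\rm A}_{C_{3}}$ and maps into ${\rm I}_{k+1}{\rm A}_{M_{3}}$. Lifting $\eta$ via Lemma \ref{leem1}(3) to an element of ${\rm I}_{k+1}{\rm A}_{C_{3}}$ (any lift of a map that is trivial mod $\gamma_{k+1}$ can be chosen to act as $y_{i}\mapsto y_{i}\cdot(\text{degree}\ge k+1)$), we see that $\psi^{-1}\phi$ differs from a member of ${\rm I}_{k+1}{\rm A}_{C_{3}}$ by an element of $K\cap{\rm I}_{k}{\rm A}_{C_{3}}$. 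Step 1 absorbs that element into ${\rm I}_{k}N\,{\rm I}_{k+1}{\rm A}_{C_{3}}$, which proves the criterion.

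The main obstacle is Step 1: showing that tame conjugates of $\widehat w$ and their iterated commutators with tame IA-automorphisms fill every graded piece of $K$. We would first settle degree $4$ by an explicit basis check, since $C_{3}^{\prime\prime}$ modulo degree $5$ is spanned by the three products $[[y_{a},y_{b}],[y_{c},y_{e}]]$. The higher degrees would then follow by induction, using that $[C_{3}^{\prime\prime}, C_{3}]=1$ makes the action of tame IA-automorphisms on the entries a derivation-like substitution, which generates the higher components.
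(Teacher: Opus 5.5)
Your architecture is reasonable, and Step 2 is essentially sound: it is the $C_3$-over-$M_3$ analogue of Proposition~\ref{propo4}, with degree-by-degree defect removal in place of adjoining the whole kernel. The genuine gap is Step 1. It carries all of the content, and you only assert it.

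Identify $K\cap{\rm I}_d{\rm A}_{C_3}$ modulo ${\rm I}_{d+1}{\rm A}_{C_3}$ with ${\rm Hom}(\mathbb{Z}^3,\mathrm{gr}_d\,C_3'')$. You then need, for every $d\geq 5$, that this lattice is spanned \emph{over $\mathbb{Z}$} by two kinds of elements. The first are the leading terms of your commutators $[\kappa,\theta]$ with $\kappa$ of lower degree; these replace each $u_i$ by $\theta^{\pm1}(u_i)u_i^{-1}$ exactly. The second are tame conjugates.

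Degree $4$ is a finite check and it does go through. Nothing beyond degree $4$ is verified. ``A derivation-like substitution generates the higher components'' restates the claim; it is not an argument. You would have to reach the leading terms of every automorphism $y_i\mapsto y_i[[y_a,y_b]^g,[y_c,y_e]]$ (other generators fixed) with $g=y_1^{m_1}y_2^{m_2}y_3^{m_3}$ arbitrary, including the diagonal ones $[[y_a,y_b]^g,[y_a,y_b]]$.

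Integrality is essential here. Proposition~\ref{propo2} demands the exact equality ${\rm I}_k{\rm A}_{C_3}=({\rm I}_kN){\rm I}_{k+1}{\rm A}_{C_3}$. A rational spanning argument, or one that reaches only a finite-index sublattice, proves nothing. (Also, $[C_3'',C_3]=1$ in $C_3$, so the module you mean is $C_3''\cong F_3''/[F_3'',F_3]$.)

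There is also a mismatch between your two steps. As stated, Step 1 only places $K\cap{\rm I}_d{\rm A}_{C_3}$ inside $N\,{\rm I}_{d+1}{\rm A}_{C_3}$. In Step 2, however, the correcting element $n$ must leave $\rho(\psi)$ unchanged modulo ${\rm I}_{k+1}{\rm A}_{M_3}$. Knowing only $n\equiv\kappa^{-1}$ modulo ${\rm I}_{d+1}{\rm A}_{C_3}$ gives $\rho(n)\in{\rm I}_{d+1}{\rm A}_{M_3}$ with $d+1\leq k$, which is not enough. All your operations stay inside the normal subgroup $K$, so you should state Step 1 as $K\cap{\rm I}_d{\rm A}_{C_3}\subseteq (N\cap K)(K\cap{\rm I}_{d+1}{\rm A}_{C_3})$.

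For comparison, the paper does not reprove the statement; it cites \cite[Theorem 3]{kof1}. That argument is the one the paper transplants to $G_3$ in Proposition~\ref{propo4} and in the reduction leading to Proposition~\ref{propo5}. One adjoins the whole kernel $K$ of ${\rm Aut}(C_3)\to{\rm Aut}(M_3)$ and lifts Chein's density. One then shows by \emph{exact} group identities that $K$ lies in the normal closure of $\widehat w$ under $T_{C_3}$. The reduction goes first to tame conjugates of $y_1\mapsto y_1[[y_1,y_2]^g,[y_1,y_3]]$. It then shifts the exponents of $g$ down to $0$ by conjugating with the partial conjugations $\tau_{i,j}$ and with $\pi_{i,j}$, $k_i$, $\sigma_{i,j}$; these are the $C_3$-identities imported in Lemma~\ref{leem8}.

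That route gives $K\subseteq N$ outright, which makes your Step 1 and the defect removal trivial and avoids all graded and integrality bookkeeping. A graded proof of Step 1 would in effect have to reproduce these parameter-reduction identities on leading terms, in every degree.
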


Using Proposition \ref{propo3}, we may find a subgroup of ${\rm Aut}(G_{3})$ that is dense in ${\rm Aut}(G_{3})$. From now on, we write $\phi_{3}$ for the group epimorphism from ${\rm Aut}(G_{3})$ onto ${\rm Aut}(C_{3})$ induced by the natural mapping from $G_{3}$ onto $C_{3}$ and $H_{3}$ for the kernel of $\phi_{3}$.

\begin{proposition}\label{propo4}
Let $G_{3} = F_{3}/[F_{3}^{\prime\prime}, F_{3}, F_{3}]$ be a free centre-by-centre-by-metabelian group of rank $3$ freely generated by the set $\{x_{1}, x_{2}, x_{3}\}$. Let $\mu$ and $w$ be the automorphisms of $G_{3}$ defined by 
\begin{equation*}
\begin{split}
\mu(x_{1}) &= x_{1}[x^{-1}_{1}, [x_{1}, [x_{2}, x_{3}]]], \mu(x_{i}) = x_{i}, \text{ for } i = 2, 3, \\
w(x_{1}) &= x_{1}[[x_{1}, x_{2}], [x_{1}, x_{3}]],  w(x_{i}) = x_{i}, \text{ for } i = 2, 3.
\end{split}
\end{equation*}
Then, the subgroup $\langle T_{3}, w, \mu \rangle H_{3}$ of ${\rm Aut}(G_{3})$ is dense in ${\rm Aut}(G_{3})$, with respect to the formal power series topology.
\end{proposition}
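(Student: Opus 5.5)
We use the criterion of Proposition \ref{propo2}. Set $N = \langle T_{3}, w, \mu \rangle H_{3}$; since $T_{3} \subseteq N$, it is enough to show ${\rm I}_{k}{\rm A}_{G_{3}} = ({\rm I}_{k}N){\rm I}_{k+1}{\rm A}_{G_{3}}$ for all $k \geq 2$. The idea is to pull back the corresponding statement for $C_{3}$ along $\phi_{3}$, using that $H_{3} \subseteq N$ absorbs the kernel.

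\textbf{Step 1: compatibility of $\phi_{3}$ with the filtrations.} First, $\phi_{3}$ maps $T_{3}$ onto $T_{C_{3}}$, $\mu$ to $\widehat\mu$ and $w$ to $\widehat w$. Hence $\phi_{3}(N) = \langle T_{C_{3}}, \widehat\mu, \widehat w\rangle =: \widehat N$, which is dense in ${\rm Aut}(C_{3})$ by Proposition \ref{propo3}. Next, the natural map $G_{3} \to C_{3}$ sends $\gamma_{k}(G_{3})$ onto $\gamma_{k}(C_{3})$, so $\phi_{3}({\rm I}_{k}{\rm A}_{G_{3}}) \subseteq {\rm I}_{k}{\rm A}_{C_{3}}$. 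I also need the reverse inclusion ${\rm I}_{k}{\rm A}_{C_{3}} \subseteq \phi_{3}({\rm I}_{k}{\rm A}_{G_{3}})$. Take $\psi \in {\rm I}_{k}{\rm A}_{C_{3}}$. Then $\psi(y_{i}) = y_{i}u_{i}$ with $u_{i} \in \gamma_{k}(C_{3})$. Lift each $u_{i}$ to some $v_{i} \in \gamma_{k}(G_{3})$, which is possible since $\gamma_{k}(G_{3})$ maps onto $\gamma_{k}(C_{3})$. Define the endomorphism $\theta$ of $G_{3}$ by $x_{i} \mapsto x_{i}v_{i}$. It induces $\psi$ on $C_{3}$, hence induces an automorphism of $M_{3}$, and so $\theta$ is an automorphism by Lemma \ref{leem1}(1). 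By construction $\theta \in {\rm I}_{k}{\rm A}_{G_{3}}$ and $\phi_{3}(\theta) = \psi$.

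\textbf{Step 2: the density criterion.} Let $\alpha \in {\rm I}_{k}{\rm A}_{G_{3}}$. Then $\phi_{3}(\alpha) \in {\rm I}_{k}{\rm A}_{C_{3}}$. By Proposition \ref{propo2} applied to $\widehat N$ in ${\rm Aut}(C_{3})$ (legitimate, since $C_{3}$ is residually nilpotent), we can write $\phi_{3}(\alpha) = \widehat\beta\widehat\gamma$ with $\widehat\beta \in {\rm I}_{k}\widehat N$ and $\widehat\gamma \in {\rm I}_{k+1}{\rm A}_{C_{3}}$.

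By Step 1, choose $\gamma \in {\rm I}_{k+1}{\rm A}_{G_{3}}$ with $\phi_{3}(\gamma) = \widehat\gamma$. Set $\beta = \alpha\gamma^{-1}$. Then $\beta \in {\rm I}_{k}{\rm A}_{G_{3}}$, since ${\rm I}_{k+1}{\rm A}_{G_{3}} \subseteq {\rm I}_{k}{\rm A}_{G_{3}}$. Also $\phi_{3}(\beta) = \widehat\beta \in \widehat N = \phi_{3}(N)$, so $\beta \in N\ker\phi_{3} = NH_{3} = N$. Therefore $\beta \in {\rm I}_{k}N$, and $\alpha = \beta\gamma \in ({\rm I}_{k}N){\rm I}_{k+1}{\rm A}_{G_{3}}$. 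This proves the criterion for every $k \geq 2$, and Proposition \ref{propo2} gives density.

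\textbf{Main obstacle.} The only delicate point is the lifting in Step 1: constructing $\gamma$ inside ${\rm I}_{k+1}{\rm A}_{G_{3}}$, not merely some preimage under $\phi_{3}$. Surjectivity of $\phi_{3}$ from Lemma \ref{leem1}(2) alone is not enough here. The argument above handles this by building the lift explicitly on generators, using that the lower central terms of $G_{3}$ map onto those of $C_{3}$, with Lemma \ref{leem1}(1) guaranteeing invertibility.
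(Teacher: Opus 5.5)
Your proof is correct and is essentially the paper's argument: you pull the density criterion for $\langle T_{C_{3}}, \widehat\mu, \widehat w\rangle$ (Proposition \ref{propo3}) back along $\phi_{3}$ and absorb the kernel into $H_{3} \subseteq N$. Two of your steps tighten points the paper passes over quickly: your explicit lift of ${\rm I}_{k}{\rm A}_{C_{3}}$ into ${\rm I}_{k}{\rm A}_{G_{3}}$ via Lemma \ref{leem1}(1), where the paper derives $\phi_{3}({\rm I}_{k}{\rm A}_{G_{3}}) = {\rm I}_{k}{\rm A}_{C_{3}}$ from surjectivity alone, and your choice to lift $\widehat\gamma$ first and place $\beta = \alpha\gamma^{-1}$ in $NH_{3} = N$, where the paper takes $g_{1} \in {\rm I}_{k}N_{1}$ although only ${\rm I}_{k}N_{2}$ is justified.
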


\begin{proof}
Throughout the proof, we write $N_{1} = \langle T_{3}, \mu, w \rangle$ and $N_{2} = N_{1}H_{3}$. By Proposition \ref{propo2}, it suffices to show that ${\rm I}_{k}{\rm A}_{G_{3}} = ({\rm I}_{k}N_{2}){\rm I}_{k+1}{\rm A}_{G_{3}}$ for all $k \geq 2$. Fix $k \geq 2$. It suffices to show that ${\rm I}_{k}{\rm A}_{G_{3}} \subseteq ({\rm I}_{k}N_{2}){\rm I}_{k+1}{\rm A}_{G_{3}}$. Write $\widehat N_{1} = \langle T_{C_{3}}, \widehat\mu, \widehat w \rangle$. By Proposition \ref{propo2}, we have ${\rm I}_{k}{\rm A}_{C_{3}} = ({\rm I}_{k}\widehat N_{1}){\rm I}_{k+1}{\rm A}_{C_{3}}$. We observe that $\phi_{3}(N_{1}) = \widehat N_{1}$. Furthermore, by Lemma \ref{leem1} (2), $\phi_{3}$ is surjective and hence $\phi_{3}({\rm I}_{k}{\rm A}_{G_{3}}) = {\rm I}_{k}{\rm A}_{C_{3}}$. Therefore,
\begin{equation*}
\phi_{3}({\rm I}_{k}{\rm A}_{G_{3}}) = {\rm I}_{k}{\rm A}_{C_{3}} = ({\rm I}_{k}\widehat N_{1}){\rm I}_{k+1}{\rm A}_{C_{3}}= \phi_{3}(({\rm I}_{k}N_{1}){\rm I}_{k+1}{\rm A}_{G_{3}}).
\end{equation*}
 Let $\theta \in {\rm I}_{k}{\rm A}_{G_{3}}$. By the above equation, we may write $\theta = g_{1}g_{2}h$, where $g_{1} \in {\rm I}_{k}N_{1}$, $g_{2} \in {\rm I}_{k+1}{\rm A}_{G_{3}}$ and $h \in H_{3}$. Therefore, $\theta = g_{1}hg_{2}[g_{2}, h]$. Since $N_{1} \leq N_{2}$, $g_{1} \in {\rm I}_{k}N_{2}$. Note that $h \in H_{3} \cap {\rm I}_{k}{\rm A}_{G_{3}} = {\rm I}_{k}H_{3}$. Since $H_{3} \leq N_{2}$, it follows that $h \in {\rm I}_{k}N_{2}$. Furthermore, since ${\rm I}_{k+1}{\rm A}_{G_{3}}$ is a normal subgroup of ${\rm Aut}(G_{3})$, $g_{2}[g_{2}, h] \in {\rm I}_{k+1}{\rm A}_{G_{3}}$. Hence, $\theta \in {\rm I}_{k}N_{2}{\rm I}_{k+1}{\rm A}_{G_{3}}$, and this proves the inclusion.
\end{proof}

\section{Structure of the Kernel $H_{3}$}\label{sec3}

\subsection{Identical Relations in $G_{3}$}

By the standard group commutator identities, we have 
\begin{equation*}
\begin{split}
[ab, c] &= [a, c][a, c, b][b, c], [a, bc] = [a, c][a, b][a, b, c], \\
[a^{-1}, b] &= [a, b]^{-1}[[a, b]^{-1}, a^{-1}], [a, b^{-1}] = [a, b]^{-1}[[a, b]^{-1}, b^{-1}].
\end{split}
\end{equation*}
Moreover, by the Three Subgroups Lemma, for any group $G$ we have $[G^{\prime\prime}, G^{\prime}] \subseteq [G^{\prime\prime}, G, G]$. Therefore, it is elementary to prove the following results, which will be used repeatedly.

\begin{lemma}\label{leem2}
Let $G$ be a centre-by-centre-by-metabelian group, that is, $[G^{\prime\prime}, G, G] = \{1\}$, and let $x, y, z, t \in G$, $u, v, w \in G^{\prime}$ and $r, s \in G^{\prime\prime}$. Then: 
\begin{enumerate}
\item $[u, v, w] = 1$.

\item $[u^{-1}, v] = [u, v]^{-1}$.

\item $[uv, w] = [u, w][v,w]$ and $[u, vw] = [u, v][u, w]$.

\item $[u^{v}, w] = [u, w]$.

\item If $x \equiv y \pmod {G^{\prime}}$, then $[u^{x}, v] = [u^{y}, v]$. In particular, $[u^{xy}, v] = [u^{yx}, v]$.

\item $[rs, x] = [r, x][s, x]$.

\item $[r, xy] = [r, x][r, y]$.

\item $[u, v, x^{-1}] = [u^{-1}, v, x] = [u, v^{-1}, x] = [[u, v]^{-1}, x] = [u, v, x]^{-1}$.

\item $[u^{x}, v^{y}, z] = [u^{xy^{-1}}, v, z]$.

\end{enumerate}
\end{lemma}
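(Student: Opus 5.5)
The plan is to derive each item directly from the displayed commutator identities, using two structural facts. First, $G^{\prime\prime}$ lies in the second centre of $G$, because $[G^{\prime\prime}, G, G] = \{1\}$. Second, by the Three Subgroups Lemma remark, $[G^{\prime\prime}, G^{\prime}] \subseteq [G^{\prime\prime}, G, G] = \{1\}$, so $G^{\prime\prime}$ commutes elementwise with $G^{\prime}$. In particular, for $u, v \in G^{\prime}$ we have $[u,v] \in G^{\prime\prime}$, and hence $[u,v]$ commutes with every element of $G^{\prime}$. This already gives (1): $[u,v,w] = [[u,v],w] = 1$, since $[u,v] \in G^{\prime\prime}$ and $w \in G^{\prime}$.

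For (2) and (3) I would expand using the standard identities and kill the correction terms by (1). For example, $[uv,w] = [u,w][u,w,v][v,w]$, and $[u,w,v] = 1$ by (1). Similarly, $[u^{-1},v] = [u,v]^{-1}[[u,v]^{-1},u^{-1}]$, and the last factor is trivial because $[u,v]^{-1} \in G^{\prime\prime}$ commutes with $u^{-1} \in G^{\prime}$.

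For (4), write $u^{v} = u[u,v]$ and apply (3). The extra term is $[[u,v],w]$, which is $1$ by (1).

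For (5), put $y = xc$ with $c \in G^{\prime}$. Then $u^{y} = (u^{x})^{c}$, which reduces to (4) applied with the element $u^{x}$ of $G^{\prime}$. The particular case follows because $xy \equiv yx \pmod{G^{\prime}}$.

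For (6), expand $[rs,x] = [r,x][r,x,s][s,x]$. Here $[r,x] \in [G^{\prime\prime},G]$ is central, so the middle factor vanishes.

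For (7), expand $[r,xy] = [r,y][r,x][r,x,y]$. The factor $[r,x,y]$ is trivial, and the central elements $[r,x]$ and $[r,y]$ commute with each other.

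For (8), I would use that the map $s \mapsto [s,x]$ is a homomorphism from $G^{\prime\prime}$ into the centre of $G$, by (6), and that the map $x \mapsto [s,x]$ is a homomorphism on $G$, by (7). From (7) we get $[s,x^{-1}] = [s,x]^{-1}$, since $[s,1] = 1$. Taking $s = [u,v]$ gives $[u,v,x^{-1}] = [u,v,x]^{-1}$. From (2) we have $[u^{-1},v] = [u,v]^{-1}$, and $[u,v^{-1}] = [u,v]^{-1}$ follows analogously, using the fourth displayed identity and the commutation of $G^{\prime\prime}$ with $G^{\prime}$. Applying (6) with $s = [u,v]$ then gives $[[u,v]^{-1},x] = [u,v,x]^{-1}$, which finishes the chain.

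Item (9) is the step I expect to be the main obstacle, since it mixes conjugation by arbitrary elements with the three-fold commutator. My plan is as follows.

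\begin{enumerate}
\item Since $[s,z]$ is central for $s \in G^{\prime\prime}$, we have $[s^{g},z] = [s,z]^{g}$, and conjugation fixes central elements. So $[s^{g},z] = [s,z]$.
\item Take $s = [u^{x y^{-1}},v]$ and $g = y$. This gives $[u^{x},v^{y},z] = [[u^{xy^{-1}},v]^{y},z] = [u^{xy^{-1}},v,z]$.
\end{enumerate}

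This settles (9) without any further expansion. The only care needed throughout is tracking which elements lie in $G^{\prime\prime}$ and which lie in $[G^{\prime\prime},G]$, which is central.
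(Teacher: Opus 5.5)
Your route is the one the paper intends: it gives no proof beyond pointing to the standard identities and to $[G^{\prime\prime},G^{\prime}]\subseteq[G^{\prime\prime},G,G]$. Items (1)--(8) check out. The only slip there is small. In the second half of (3), removing $[u,v,w]$ leaves $[u,w][u,v]$. To reorder this to $[u,v][u,w]$ you need $G^{\prime\prime}$ to be abelian, which holds because $G^{\prime\prime}\subseteq G^{\prime}$ and $[G^{\prime\prime},G^{\prime}]=\{1\}$.

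The real problem is step 1 of (9), which you rightly flagged as the crux. The equation $[s^{g},z]=[s,z]^{g}$ is not a commutator identity, and centrality of $[s,z]$ does not produce it. You only use centrality afterwards, to drop the conjugation by $g$. The identity valid in every group is $[s^{g},z^{g}]=[s,z]^{g}$, that is, $[s^{g},z]=[s,z^{g^{-1}}]^{g}$. Nothing you wrote justifies replacing $z^{g^{-1}}$ by $z$.

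The conclusion $[s^{g},z]=[s,z]$ is nevertheless true, and the repair uses only facts you already have. Write $s^{g}=s[s,g]$. Here $[s,g]\in G^{\prime\prime}$ because $G^{\prime\prime}$ is normal, and $[s,g]\in[G^{\prime\prime},G]$ is central. Then (6) gives $[s^{g},z]=[s,z]\,[[s,g],z]=[s,z]$.

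Alternatively, write $z^{g^{-1}}=z[z,g^{-1}]$ with $[z,g^{-1}]\in G^{\prime}$. Then (7) and $[G^{\prime\prime},G^{\prime}]=\{1\}$ give $[s,z^{g^{-1}}]=[s,z]$, and conjugating the central element $[s,z]$ by $g$ changes nothing.

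With this repair, your step 2 goes through as written and (9) follows.
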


\begin{lemma}\label{extrlem1}
\begin{enumerate}
\item Let $\theta$ be an automorphism of $G_{3}$ that induces the identity map on $G_{3}/G_{3}^{\prime\prime}$. Then, $\theta(u) = u$ for all $u \in G_{3}^{\prime\prime}$.

\item Let $\theta_{1}$ and $\theta_{2}$ be automorphisms of $G_{3}$ that induce the identity map on $G_{3}/G_{3}^{\prime\prime}$. Then, $\theta_{1}\theta_{2} = \theta_{2}\theta_{1}$.
\end{enumerate}
\end{lemma}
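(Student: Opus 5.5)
We write $\theta(x_i) = x_i u_i$ with $u_i \in G_{3}^{\prime\prime}$ for $i=1,2,3$, which is possible because $\theta$ induces the identity on $G_{3}/G_{3}^{\prime\prime}$. For part (1), the key observation is that $G_{3}^{\prime\prime}$ is generated, as a group, by elements $[a,b]$ with $a,b \in G_{3}^{\prime}$. So it suffices to show that $\theta([a,b]) = [a,b]$ for all $a,b \in G_{3}^{\prime}$.

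\textbf{Step 1: $\theta$ on $G_{3}^{\prime}$.} We first show that for every $a \in G_{3}^{\prime}$ we have $\theta(a) = a c_a$ with $c_a \in [G_{3}^{\prime\prime}, G_{3}]$, which is central in $G_{3}$ (as recalled above, by \cite[Theorem 6.1]{ngule}). It is enough to check this on commutators $[g,h]$ with $g,h \in G_{3}$. Since $\theta$ is the identity modulo $G_{3}^{\prime\prime}$, we can write $\theta(g) = g r$ and $\theta(h) = h s$ with $r,s \in G_{3}^{\prime\prime}$. Expanding with the identities at the start of this section,
\[
[gr, hs] \equiv [g,h] \pmod{[G_{3}^{\prime\prime}, G_{3}]}.
\]
This holds because every extra factor lies in $[G_{3}^{\prime\prime}, G_{3}]$: the commutators $[r,h]$, $[g,s]$, $[r,s]$ clearly do, and conjugates of elements of $[G_{3}^{\prime\prime}, G_{3}]$ stay there, since this subgroup is central and hence normal. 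Products and inverses preserve the property, because the $c$'s are central. For example, $\theta(ab) = a c_a\, b c_b = ab\, c_a c_b$.

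\textbf{Step 2: $\theta$ on $G_{3}^{\prime\prime}$.} For $a,b \in G_{3}^{\prime}$ we get
\[
\theta([a,b]) = [a c_a, b c_b] = [a,b],
\]
since $c_a$ and $c_b$ are central. As $G_{3}^{\prime\prime}$ is generated by such commutators, $\theta$ fixes $G_{3}^{\prime\prime}$ pointwise. This proves (1).

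\textbf{Part (2).} We show that $\theta_1\theta_2$ and $\theta_2\theta_1$ agree on each generator $x_i$. Write $\theta_1(x_i) = x_i u_i$ and $\theta_2(x_i) = x_i v_i$ with $u_i, v_i \in G_{3}^{\prime\prime}$. By part (1), $\theta_1(v_i) = v_i$ and $\theta_2(u_i) = u_i$. Hence
\[
\theta_1\theta_2(x_i) = \theta_1(x_i v_i) = x_i u_i v_i, \qquad \theta_2\theta_1(x_i) = x_i v_i u_i.
\]
These are equal because $G_{3}^{\prime\prime}$ is abelian: $[G_{3}^{\prime\prime}, G_{3}^{\prime\prime}] \subseteq [G_{3}^{\prime\prime}, G_{3}^{\prime}] \subseteq [G_{3}^{\prime\prime}, G_{3}, G_{3}] = 1$, using the Three Subgroups Lemma fact noted above. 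So $\theta_1\theta_2 = \theta_2\theta_1$.

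The step needing the most care is the bookkeeping in Step 1, where one must confirm that every correction term produced by the commutator expansions lies in the central subgroup $[G_{3}^{\prime\prime}, G_{3}]$. Centrality makes each such check routine.
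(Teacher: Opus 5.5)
Your proof is correct. The paper gives no written proof of this lemma. It only remarks, just before stating it, that the Three Subgroups Lemma gives $[G^{\prime\prime}, G^{\prime}] \subseteq [G^{\prime\prime}, G, G]$, so $[G_{3}^{\prime\prime}, G_{3}^{\prime}] = 1$, and calls the result elementary.

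That fact alone gives part (1) in one line and makes your Step 1 unnecessary. For $a, b \in G_{3}^{\prime}$ the hypothesis already gives $\theta(a) = ar$ and $\theta(b) = bs$ with $r, s \in G_{3}^{\prime\prime}$. Since $G_{3}^{\prime\prime}$ centralizes $G_{3}^{\prime}$ (the same fact you use in part (2)), $r$ and $s$ are central in $G_{3}^{\prime}$. Hence $\theta([a,b]) = [ar, bs] = [a,b]$ directly.

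Your detour through the central subgroup $N = [G_{3}^{\prime\prime}, G_{3}]$ is valid, just longer. Two small points on it:
\begin{itemize}
\item Centrality of $N$ follows immediately from $[G_{3}^{\prime\prime}, G_{3}, G_{3}] = 1$; you do not need the cited result identifying the centre.
\item Your list of correction terms in Step 1 omits factors such as $[[g,h], s]$ and $[[g,hs], r]$. These also lie in $N$, so nothing breaks. The cleanest justification is that $rN$ and $sN$ are central in $G_{3}/N$, so $[gr, hs] \equiv [g,h] \pmod{N}$.
\end{itemize}

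Part (2) is exactly the expected argument: evaluate both composites on the generators, use part (1), and use that $G_{3}^{\prime\prime}$ is abelian.
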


\subsection{The Kernel $H_{3}$}

Recall that $H_{3}$ is the kernel of the group epimorphism $\phi_{3}$ from ${\rm Aut}(G_{3})$ onto ${\rm Aut}(C_{3})$. Thus, $H_{3}$ consists of all automorphisms of $G_{3}$ that induce the identity mapping on $G_{3}/[G_{3}^{\prime\prime}, G_{3}]$. Equivalently, $\theta \in H_{3}$ if and only if $\theta(x_{i}) = x_{i}c_{i}$, $i = 1, 2, 3$, for some $c_{1}, c_{2}, c_{3} \in [G_{3}^{\prime\prime}, G_{3}]$. By Lemma \ref{extrlem1}, $H_{3}$ is an abelian group that acts trivially on $G_{3}^{\prime\prime}$. This fact will be used repeatedly without further notice.

By Proposition \ref{propo4}, the subgroup $\langle T_{3}, \mu, w\rangle H_{3}$ of ${\rm Aut}(G_{3})$ is dense in ${\rm Aut}(G_{3})$. Therefore, studying the kernel $H_{3}$ is crucial for proving Theorem \ref{the}. The study of $H_{3}$ is organized as follows. In this section, we study $H_{3}$ as a group on which $T_{3}$ acts as a group of operators by conjugation and give a generating set of $H_{3}^{T_{3}}$. In Section \ref{sec4}, we find a connection between elements in $H_{3}$ and automorphisms of $C_{3}$. Using this connection, we transfer computational results on ${\rm Aut}(C_{3})$ (proved in \cite{kof1}) to $H_{3}$. In Section \ref{sec5}, by a reductive method we show that $H_{3}$ is the normal closure under $T_{3}$ of two automorphisms. 

\subsection{Generating Sets of the Kernel $H_{3}$}\label{sec3}

Throughout Sections 4--7, we use the notation
\begin{equation*}
A = [x_{1}, x_{2}], B = [x_{1}, x_{3}], C = [x_{2}, x_{3}].
\end{equation*}
For our purposes, for $i, j, \ell \in \{1, 2, 3\}$, we define the following tame automorphisms of $G_{3}$, which we will use repeatedly:
\begin{itemize}
\item[] $k_{i}(x_{i}) = x^{-1}_{i}$, $k_{i}(x_{\ell}) = x_{\ell} \text{ for } \ell \neq i$.

\item[] $\sigma_{i, j}(x_{i}) = x_{j}$, $\sigma_{i, j}(x_{j}) = x_{i}$, $\sigma_{i, j}(x_{\ell}) = x_{\ell} \text{ for } \ell \neq i, j$.

\item[] $\pi_{i, j}(x_{i}) = x_{i}x_{j}$, $\pi_{i, j}(x_{\ell}) = x_{\ell} \text{ for } \ell \neq i$.
\end{itemize}

With $g \equiv x_{1}^{m_{1}}x_{2}^{m_{2}}x_{3}^{m_{3}} \pmod {G^{\prime}_{3}}$, where $m_{1}, m_{2}, m_{3} \in \mathbb{Z}$, we define the following endomorphisms of $G_{3}$:
\begin{enumerate}
\item[] $\alpha_{i, (m_{1}, m_{2}, m_{3})}(x_{1}) = x_{1}[[x_{1}, x_{2}]^{g}, [x_{1}, x_{3}], x_{i}], \quad \alpha_{i, (m_{1}, m_{2}, m_{3})}(x_{j}) = x_{j} \text{ for } j = 2, 3$,

\item[] $\beta_{i, (m_{1}, m_{2}, m_{3})}(x_{1}) = x_{1}[[x_{1}, x_{2}]^{g}, [x_{2}, x_{3}], x_{i}], \quad \beta_{i, (m_{1}, m_{2}, m_{3})}(x_{j}) = x_{j} \text{ for } j = 2, 3$,

\item[] $f_{1, i, (m_{1}, m_{2}, m_{3})}(x_{1}) = x_{1}[[x_{1}, x_{2}]^{g}, [x_{1}, x_{2}], x_{i}], \quad f_{1, i, (m_{1}, m_{2}, m_{3})}(x_{j}) = x_{j} \text{ for } j = 2, 3$,

\item[] $f_{2, i, (m_{1}, m_{2}, m_{3})}(x_{1}) = x_{1}[[x_{1}, x_{3}]^{g}, [x_{1}, x_{3}], x_{i}], \quad f_{2, i, (m_{1}, m_{2}, m_{3})}(x_{j}) = x_{j} \text{ for } j = 2, 3$,

\item[] $f_{3, i, (m_{1}, m_{2}, m_{3})}(x_{1}) = x_{1}[[x_{1}, x_{3}]^{g}, [x_{2}, x_{3}], x_{i}], \quad f_{3, i, (m_{1}, m_{2}, m_{3})}(x_{j}) = x_{j} \text{ for } j = 2, 3$,

\item[] $f_{4, i, (m_{1}, m_{2}, m_{3})}(x_{1}) = x_{1}[[x_{2}, x_{3}]^{g}, [x_{2}, x_{3}], x_{i}], \quad f_{4, i, (m_{1}, m_{2}, m_{3})}(x_{j}) = x_{j} \text{ for } j = 2, 3$.
\end{enumerate}

Note that, by Lemma \ref{leem1} (1), all the above endomorphisms are automorphisms of $G_{3}$.

\begin{lemma}\label{leem3}
$H_{3}$ is the normal closure under $T_{3}$ of all the automorphisms $\alpha_{i, (m_{1}, m_{2}, m_{3})}$, $\beta_{i, (m_{1}, m_{2}, m_{3})}$ and $f_{j, i, (m_{1}, m_{2}, m_{3})}$, with $i = 1, 2, 3$ and $j = 1, \ldots, 4$. 
\end{lemma}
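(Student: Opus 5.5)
We first identify $H_{3}$ with a group of tuples and then reduce $[G_{3}^{\prime\prime}, G_{3}]$ to a set of generators.

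\emph{Step 1: $H_{3}$ as tuples.} Every $\theta \in H_{3}$ has the form $\theta(x_{i}) = x_{i}c_{i}$ with $c_{i} \in [G_{3}^{\prime\prime}, G_{3}]$. Conversely, any triple $(c_{1}, c_{2}, c_{3})$ of elements of $[G_{3}^{\prime\prime}, G_{3}]$ defines an endomorphism inducing the identity on $C_{3}$, hence on $M_{3}$. By Lemma \ref{leem1} (1) this endomorphism is an automorphism. Since $[G_{3}^{\prime\prime}, G_{3}]$ is central and $H_{3}$ fixes $G_{3}^{\prime\prime}$ pointwise, composition corresponds to coordinatewise multiplication. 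So $H_{3} \cong [G_{3}^{\prime\prime}, G_{3}]^{3}$ as abelian groups. In particular, $H_{3}$ is generated by the automorphisms that move only one generator $x_{i}$, by a single element $[r, x_{\ell}]$ with $r \in G_{3}^{\prime\prime}$.

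Conjugating by the permutations $\sigma_{i,j}$ moves the modified generator to $x_{1}$. Conjugation by a permutation also permutes the letters inside $c$, but this is harmless because the generating set of $[G_{3}^{\prime\prime}, G_{3}]$ we use is closed under such relabelling. So it suffices to treat $\theta(x_{1}) = x_{1}c$, $\theta(x_{j}) = x_{j}$ for $j = 2,3$, with $c$ running over a generating set of $[G_{3}^{\prime\prime}, G_{3}]$.

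\emph{Step 2: generators of $[G_{3}^{\prime\prime}, G_{3}]$.} The group $G_{3}^{\prime\prime}$ is generated by commutators $[u^{g}, v^{h}]$ with $u, v \in \{A, B, C\}$, and conjugates of these. By Lemma \ref{leem2} (3)--(7), $[\,\cdot\,, x_{i}]$ is bilinear here, so $[G_{3}^{\prime\prime}, G_{3}]$ is generated by elements $[u^{g}, v^{h}, x_{i}]$. By Lemma \ref{leem2} (9), (5) and (8), each of these reduces to $[u^{g}, v, x_{i}]$ with $g \equiv x_{1}^{m_{1}}x_{2}^{m_{2}}x_{3}^{m_{3}}$, up to inversion. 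Swapping $u$ and $v$ uses $[u^{g}, v] = [v^{g^{-1}}, u]^{-1}$, which again follows from (9) and (8). This leaves, up to sign, the six unordered pairs
\[
\{A,B\},\ \{A,C\},\ \{A,A\},\ \{B,B\},\ \{B,C\},\ \{C,C\}.
\]
These are exactly the elements $c$ appearing in $\alpha$, $\beta$ and $f_{1}, \ldots, f_{4}$. Inverses of the generators are inverse automorphisms, since $H_{3}$ is abelian. This establishes the generation claim, modulo the permutation-closure bookkeeping in Step 1.

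\emph{Step 3: the main obstacle.} The delicate point is Step 1's reduction to the $x_{1}$ slot. Conjugating $\theta$ with $\theta(x_{2}) = x_{2}c$ by $\sigma_{1,2}$ gives an automorphism of the form $x_{1} \mapsto x_{1}\sigma_{1,2}(c)$. Here $\sigma_{1,2}(c)$ has the form $[u'^{g'}, v', x_{i'}]$, but with $u', v'$ possibly equal to $A^{-1}$, for example $\sigma_{1,2}(A) = A^{-1}$. We absorb such signs using Lemma \ref{leem2} (2), (4) and (8), possibly at the cost of changing the exponent vector of $g'$. Since all $(m_{1}, m_{2}, m_{3})$ are allowed in the lemma, this costs nothing. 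So the lemma in fact holds even without the normal-closure operation beyond these permutations. The real content is the careful verification of the identities $[u^{g}, v^{h}, x] = [u^{gh^{-1}}, v, x]$ and the sign rules, which is routine from Lemma \ref{leem2}.
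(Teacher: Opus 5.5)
Your proof is correct and follows the paper's own argument. You move every modification onto $x_{1}$ by conjugating with $\sigma_{1,2}$ and $\sigma_{1,3}$, write elements of $[G_{3}^{\prime\prime},G_{3}]$ as products of $[[w_{1},w_{2}]^{g_{1}},[w_{3},w_{4}]^{g_{2}},w_{5}]$, shift the conjugation onto one entry with Lemma~\ref{leem2} (9), and reorder the pair with Lemma~\ref{leem2} (8)--(9). Your Step 3 ``relabelling'' worry is unnecessary, since $\sigma_{1,2}(c)$ is just another element of $[G_{3}^{\prime\prime},G_{3}]$ and so is already covered by Step 2; this is exactly how the paper handles it via $\theta=\theta_{1}\theta_{2}^{\sigma_{1,2}}\theta_{3}^{\sigma_{1,3}}$.
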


\begin{proof}
Recall that an element $\theta \in H_{3}$ is defined by $\theta(x_{i}) = x_{i}u_{i}$, where $u_{i} \in [G_{3}^{\prime\prime}, G_{3}]$, $i = 1, 2, 3$. Observe that $\theta = \theta_{1}\theta_{2}^{\sigma_{1,2}}\theta_{3}^{\sigma_{1,3}}$, where $\theta_{1}$, $\theta_{2}$ and $\theta_{3}$ are the elements of $H_{3}$ defined by $\theta_{1}(x_{1}) = x_{1}u_{1}$, $\theta_{2}(x_{1}) = x_{1}\sigma_{1,2}(u_{2})$, $\theta_{3}(x_{1}) = x_{1}\sigma_{1,3}(u_{3})$ and $\theta_{1}(x_{j}) = \theta_{2}(x_{j}) = \theta_{3}(x_{j}) = x_{j}$ for $j = 2, 3$. Therefore, $H_{3}$ is the normal closure under $T_{3}$ of all automorphisms of the form 
\begin{equation*}
x_{1} \mapsto x_{1}u,\quad  x_{i} \mapsto x_{i} \text{ for } i = 2, 3,
\end{equation*}
where $u \in [G_{3}^{\prime\prime}, G_{3}]$. Note that every element of $G_{3}^{\prime\prime}$ can be written as a product of elements of the form $[[w_{1}, w_{2}]^{g_{1}}, [w_{3}, w_{4}]^{g_{2}}]$, where $w_{1}, \ldots, w_{4} \in \{x_{1}, x_{2}, x_{3}\}$. Hence, by Lemma \ref{leem2} (1), Lemma \ref{leem2} (6) and Lemma \ref{leem2} (7), we deduce that every element of $[G_{3}^{\prime\prime}, G_{3}]$ may be written as a product of elements of the form $[[w_{1}, w_{2}]^{g_{1}}, [w_{3}, w_{4}]^{g_{2}}, w_{5}]$, where $w_{1}, \ldots, w_{5} \in \{x_{1}, x_{2}, x_{3}\}$ and $g_{1}, g_{2} \in G_{3}$. Since, by Lemma \ref{leem2} (9), 
\begin{equation*}
[[w_{1}, w_{2}]^{g_{1}}, [w_{3}, w_{4}]^{g_{2}}, w_{5}] = [[w_{1}, w_{2}]^{g_{1}g^{-1}_{2}}, [w_{3}, w_{4}], w_{5}], 
\end{equation*}
it follows that $H_{3}$ is the normal closure under $T_{3}$ of all automorphisms of the form 
\begin{equation*}
x_{1} \mapsto x_{1}[[w_{1}, w_{2}]^{h}, [w_{3}, w_{4}], w_{5}], \quad x_{i} \mapsto x_{i} \text{ for } i = 2, 3,
\end{equation*}
where $h \in C_{3}$. By Lemma \ref{leem2} (8) and Lemma \ref{leem2} (9), 
\begin{equation*}
\begin{split}
[[w_{1}, w_{2}]^{h}, [w_{3}, w_{4}], w_{5}] &= [[[w_{3}, w_{4}], [w_{1}, w_{2}]^{h}]^{-1}, w_{5}] = [[w_{3}, w_{4}], [w_{1}, w_{2}]^{h}, w_{5}]^{-1} \\
&= [[w_{3}, w_{4}]^{h^{-1}}, [w_{1}, w_{2}], w_{5}]^{-1}, 
\end{split}
\end{equation*}
and the result follows. 
\end{proof}

\begin{lemma}\label{leem4}
$H_{3}= \langle \alpha_{1, (m_{1}, m_{2}, m_{3})}, \alpha_{2, (m_{1}, m_{2}, m_{3})}, \alpha_{3, (m_{1}, m_{2}, m_{3})}: m_{1}, m_{2}, m_{3} \in \mathbb{Z}\rangle^{T_{3}}$.
\end{lemma}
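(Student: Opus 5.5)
Starting from Lemma \ref{leem3}, it is enough to show that each $\beta_{i,(m_1,m_2,m_3)}$ and each $f_{j,i,(m_1,m_2,m_3)}$ ($j=1,\dots,4$) lies in the normal closure $K=\langle \alpha_{i,(m_1,m_2,m_3)}\rangle^{T_3}$. The reverse inclusion $K\subseteq H_3$ is immediate, because each $\alpha_{i,(m)}$ lies in $H_3$ and $H_3$ is normal.

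Two general facts will drive the argument. First, $H_3$ is abelian and acts trivially on $G_3''$ (Lemma \ref{extrlem1}). Hence an element of $H_3$ of the form $x_1\mapsto x_1u$, $x_j\mapsto x_j$ is determined by $u\in[G_3'',G_3]$, and composing two such elements multiplies their $u$'s. Second, for a tame $\tau$ the conjugate $\tau^{-1}\theta\tau$ of such a $\theta$ can be computed by applying $\tau$ to the defining word, and Lemma \ref{leem2} rearranges the result.

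The key tool is conjugation by $\pi_{1,2}$ and $\pi_{1,3}$ and their inverses. Take, for example, $\pi_{3,2}$, which sends $x_3\mapsto x_3x_2$ and fixes $x_1,x_2$. Conjugating $\alpha_{i,(m)}$ by it changes $B=[x_1,x_3]$ into $[x_1,x_3x_2]=[x_1,x_2]B^{x_2}$, modulo terms that die in a triple commutator with two entries from $G_3'$. By Lemma \ref{leem2}(3),(5),(9), the new image of $x_1$ splits as a product of two factors: one of the form $[A^{g'},B,x_{i'}]$, an $\alpha$-type factor, and one of the form $[A^{g},A,x_{i'}]$, an $f_1$-type factor. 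Since $H_3$ is abelian, this expresses $f_{1,\ast}$ modulo $K$.

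We must also account for the conjugating automorphism moving $x_3$ itself. Here we use the device from the proof of Lemma \ref{leem3}: split an element of $H_3$ into its $x_1$-part and its parts on $x_2$ and $x_3$, and transport the latter to $x_1$-parts via $\sigma_{1,2}$ and $\sigma_{1,3}$. These extra parts will be triple commutators such as $[A^g,B,\cdot]$ placed on $x_3$. After applying $\sigma_{1,3}$ they become commutators of the $\beta$- or $f$-type, so we need an inductive ordering of the types.

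I propose the order $\alpha\to f_1,f_2\to\beta,f_3\to f_4$:
\begin{itemize}
\item $f_1$: use $\pi_{3,2}$ to turn $B$ into $A$;
\item $f_2$: use $\pi_{2,3}$ to turn $A$ into $B$;
\item $\beta$ and $f_3$: use $\pi_{1,\ast}$-type substitutions $x_1\mapsto x_1x_2$, under which $B\mapsto$ a product involving $C$;
\item $f_4$: obtained last from $f_3$ or $\beta$.
\end{itemize}
At each stage we write
\[
\tau^{-1}\gamma\tau \;=\; (\text{new generator})\cdot(\text{product of already obtained generators}),
\]
and solve for the new generator. The exponent vector $(m_1,m_2,m_3)$ transforms linearly under $\tau$, and so does the last entry $x_i$ (via Lemma \ref{leem2}(7),(8)). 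Since $\tau$ is invertible we can reach every triple $(m_1,m_2,m_3)$ and every $i$.

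I expect the main obstacle to be bookkeeping. When $x_1$ itself is moved, as with $\pi_{1,2}$, the automorphism no longer fixes $x_2,x_3$ in the conjugated form, and the conjugates $g$ acquire extra factors. We have to check carefully that every cross term lies in a type already shown to be in $K$, using only Lemma \ref{leem2}(4),(5),(9) to normalise conjugating elements and Lemma \ref{leem2}(8) to handle inverses. A further subtle point is that symmetric types like $f_1$ may only appear through combinations such as $[A^g,A,x][A^{g^{-1}},A,x]^{\pm1}$. In that case I would first isolate $[A^{g},A,x]$ by choosing two different conjugating tame maps whose contributions differ by exactly one such term.
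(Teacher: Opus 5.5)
Your plan is essentially the paper's proof: conjugate the $\alpha_{i,(n)}$ by $\pi_{1,2}$ and $\pi_{3,2}$ (and a $\beta$ by $\pi_{1,3}$), split the result with Lemma~\ref{leem2}, and solve for the new generator. The paper only orders the steps differently, taking $\beta$ first, then $f_{1}$, then $f_{4}$, and obtaining $f_{2}$ and $f_{3}$ directly as $\sigma_{2,3}$-conjugates of $f_{1}$ and $\beta^{-1}$.

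The obstacles you anticipate do not arise. Every conjugator used (including $\pi_{1,2}$) maps $\langle x_{2},x_{3}\rangle$ onto itself, so the conjugates still fix $x_{2}$ and $x_{3}$, and no $\sigma_{1,2}$/$\sigma_{1,3}$ transport is needed. Likewise, the $\pi_{3,2}$-conjugate of $\alpha_{i,(n)}$ already produces plain factors $[A^{g},A,x_{k}]$ (for $i=1,2$, exactly one such factor times one $\alpha$-factor), so no isolation trick is required.
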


\begin{proof}
Let $B_{1} = \langle \alpha_{1, (m_{1}, m_{2}, m_{3})}, \alpha_{2, (m_{1}, m_{2}, m_{3})}, \alpha_{3, (m_{1}, m_{2}, m_{3})}: m_{1}, m_{2}, m_{3} \in \mathbb{Z}\rangle^{T_{3}}$. We claim that $\beta_{i, (m_{1}, m_{2}, m_{3})} \in B_{1}$ for all $i = 1, 2, 3$ and $m_{1}, m_{2}, m_{3} \in \mathbb{Z}$ and $f_{j, i, (m_{1}, m_{2}, m_{3})} \in B_{1}$ for all $i = 1, 2, 3$, $j = 1, 2, 3, 4$ and $m_{1}, m_{2}, m_{3} \in \mathbb{Z}$.

We first show that $\beta_{i, (m_{1}, m_{2}, m_{3})} \in B_{1}$ for all $i = 1, 2, 3$ and $m_{1}, m_{2}, m_{3} \in \mathbb{Z}$. First assume that $i = 1$. For $n_{1}, n_{2}, n_{3} \in \mathbb{Z}$, by a direct calculation we get
\begin{equation*}
\begin{split}
\pi_{1, 2}\alpha_{1, (n_{1}, n_{2}, n_{3})}\pi^{-1}_{1, 2}(x_{1}) = x_{1}x_{2}[[x_{1}x_{2}, x_{2}]^{(x_{1}x_{2})^{n_{1}}x_{2}^{n_{2}}x_{3}^{n_{3}}}, [x_{1}x_{2}, x_{3}], x_{1}x_{2}]x^{-1}_{2}. 
\end{split}
\end{equation*}
and hence
\begin{equation*}
\begin{split}
\pi_{1, 2}\alpha_{1, (n_{1}, n_{2}, n_{3})}\pi^{-1}_{1, 2}(x_{1}) = x_{1}[[x_{1}x_{2}, x_{2}]^{(x_{1}x_{2})^{n_{1}}x_{2}^{n_{2}}x_{3}^{n_{3}}}, [x_{1}x_{2}, x_{3}], x_{1}x_{2}]. 
\end{split}
\end{equation*}
Therefore, using the group commutator identities $[ab, c] = [a, c]^{b}[b, c]$ and $[a, bc] = [a, c][a, b]^{c}$ and by our notation,
\begin{equation*}
\pi_{1, 2}\alpha_{1, (n_{1}, n_{2}, n_{3})}\pi^{-1}_{1, 2}(x_{1}) = x_{1}[A^{x_{2}(x_{1}x_{2})^{n_{1}}x_{2}^{n_{2}}x_{3}^{n_{3}}}, B^{x_{2}}C, x_{1}x_{2}].
\end{equation*}
Hence, by Lemma \ref{leem2} (3) and Lemma \ref{leem2} (6),
\begin{equation*}
\pi_{1, 2}\alpha_{1, (n_{1}, n_{2}, n_{3})}\pi^{-1}_{1, 2}(x_{1}) = 
x_{1}[A^{x_{2}(x_{1}x_{2})^{n_{1}}x_{2}^{n_{2}}x_{3}^{n_{3}}}, B^{x_{2}}, x_{1}x_{2}][A^{x_{2}(x_{1}x_{2})^{n_{1}}x_{2}^{n_{2}}x_{3}^{n_{3}}}, C, x_{1}x_{2}].
\end{equation*}
Since, by Lemma \ref{leem2} (5) and Lemma \ref{leem2} (9),
\begin{equation*}
[A^{x_{2}(x_{1}x_{2})^{n_{1}}x_{2}^{n_{2}}x_{3}^{n_{3}}}, B^{x_{2}}, x_{1}x_{2}] = [A^{x_{1}^{n_{1}}x_{2}^{n_{1}+n_{2}}x_{3}^{n_{3}}}, B, x_{1}x_{2}],
\end{equation*}
and by Lemma \ref{leem2} (5), 
\begin{equation*}
[A^{x_{2}(x_{1}x_{2})^{n_{1}}x_{2}^{n_{2}}x_{3}^{n_{3}}}, C, x_{1}x_{2}] = [A^{x_{1}^{n_{1}}x_{2}^{n_{1}+n_{2}+1}x_{3}^{n_{3}}}, C, x_{1}x_{2}],
\end{equation*}
it follows from the above equation that 
\begin{equation*}
\pi_{1, 2}\alpha_{1, (n_{1}, n_{2}, n_{3})}\pi_{1, 2}^{-1}(x_{1})  =
x_{1}[A^{x_{1}^{n_{1}}x_{2}^{n_{1}+n_{2}}x_{3}^{n_{3}}}, B, x_{1}x_{2}][A^{x_{1}^{n_{1}}x_{2}^{n_{1}+n_{2}+1}x_{3}^{n_{3}}}, C, x_{1}x_{2}].
\end{equation*}
Write $W_{1} = [A^{x_{1}^{n_{1}}x_{2}^{n_{1}+n_{2}}x_{3}^{n_{3}}}, B]$ and $W_{2} = [A^{x_{1}^{n_{1}}x_{2}^{n_{1}+n_{2}+1}x_{3}^{n_{3}}}, C]$. Hence, by Lemma \ref{leem2} (7), $[W_{1}, x_{1}x_{2}] = [W_{1}, x_{1}][W_{1}, x_{2}]$ and similarly $[W_{2}, x_{1}x_{2}] = [W_{2}, x_{1}][W_{2}, x_{2}]$.
Thus,
\begin{equation*}
\begin{split}
\pi_{1, 2}\alpha_{1, (n_{1}, n_{2}, n_{3})}\pi^{-1}_{1, 2}(x_{1}) &=x_{1}[A^{x_{1}^{n_{1}}x_{2}^{n_{1}+n_{2}}x_{3}^{n_{3}}}, B, x_{1}][A^{x_{1}^{n_{1}}x_{2}^{n_{1}+n_{2}}x_{3}^{n_{3}}}, B, x_{2}]\\&[A^{x_{1}^{n_{1}}x_{2}^{n_{1}+n_{2}+1}x_{3}^{n_{3}}}, C, x_{1}][A^{x_{1}^{n_{1}}x_{2}^{n_{1}+n_{2}+1}x_{3}^{n_{3}}}, C, x_{2}]
\end{split}
\end{equation*}
and hence
\begin{equation*}
\pi_{1, 2}\alpha_{1, (n_{1}, n_{2}, n_{3})}\pi^{-1}_{1, 2}(x_{1}) = \alpha_{1, (n_{1}, n_{1}+n_{2}, n_{3})}\alpha_{2, (n_{1}, n_{1}+n_{2}, n_{3})}\beta_{1, (n_{1}, n_{1}+n_{2}+1, n_{3})}\beta_{2, (n_{1}, n_{1}+n_{2}+1, n_{3})}(x_{1}).
\end{equation*}
Since also 
\begin{equation*}
\pi_{1, 2}\alpha_{j, (n_{1}, n_{2}, n_{3})}\pi^{-1}_{1, 2}(x_{\ell}) = \beta_{j, (n_{1}, n_{1}+n_{2}+1, n_{3})}(x_{\ell}) =x_{\ell} \text{ for all } j = 1, 2 \text{ and } \ell = 2, 3, 
\end{equation*}
we have
\begin{equation*}
\pi_{1, 2}\alpha_{1, (n_{1}, n_{2}, n_{3})}\pi^{-1}_{1, 2} = \alpha_{1, (n_{1}, n_{1}+n_{2}, n_{3})}\alpha_{2, (n_{1}, n_{1}+n_{2}, n_{3})}\beta_{1, (n_{1}, n_{1}+n_{2}+1, n_{3})}\beta_{2, (n_{1}, n_{1}+n_{2}+1, n_{3})}.
\end{equation*}
Therefore, for $n_{1}  = m_{1}$, $n_{2} = -m_{1} + m_{2} - 1$  and $n_{3} = m_{3}$, it follows from the above equation that
\begin{equation*}
\beta_{1, (m_{1}, m_{2}, m_{3})}\beta_{2, (m_{1}, m_{2}, m_{3})} = \alpha^{-1}_{1, (m_{1}, m_{2} - 1, m_{3})}\alpha^{-1}_{2, (m_{1}, m_{2} - 1, m_{3})}\pi_{1, 2}\alpha_{1, (m_{1}, -m_{1} + m_{2} - 1, m_{3})}\pi^{-1}_{1, 2}.
\end{equation*}
Assume now that $i \neq 1$. An analogous argument gives
\begin{equation*}
\begin{split}
&\beta_{2, (m_{1}, m_{2}, m_{3})} = \alpha^{-1}_{2, (m_{1}, m_{2} - 1, m_{3})}\pi_{1, 2}\alpha_{2, (m_{1}, -m_{1} + m_{2} - 1, m_{3})}\pi^{-1}_{1, 2},\\
&\beta_{3, (m_{1}, m_{2}, m_{3})} = \alpha^{-1}_{3, (m_{1}, m_{2} - 1, m_{3})}\pi_{1, 2}\alpha_{3, (m_{1}, -m_{1} + m_{2} - 1, m_{3})}\pi^{-1}_{1, 2}.
\end{split}
\end{equation*}
Thus, by the above equations, $\beta_{i, (m_{1}, m_{2}, m_{3})} \in B_{1}$ for all $i = 1, 2, 3$. 

We show now that $f_{j, i, (m_{1}, m_{2}, m_{3})} \in B_{1}$ for all $i = 1, 2, 3$, $j = 1, \ldots, 4$ and $m_{1}, m_{2}, m_{3} \in \mathbb{Z}$. By calculations similar to the above, we get
\begin{equation*}
\begin{split}
&f_{1, 1, (m_{1}, m_{2}, m_{3})} = \alpha^{-1}_{1, (m_{1}, m_{2} - 1, m_{3})}\pi_{3, 2}\alpha_{1, (m_{1}, m_{2} - m_{3}, m_{3})}\pi^{-1}_{3, 2},\\
&f_{1, 2, (m_{1}, m_{2}, m_{3})} = \alpha^{-1}_{2, (m_{1}, m_{2} - 1, m_{3})}\pi_{3, 2}\alpha_{2, (m_{1}, m_{2} - m_{3}, m_{3})}\pi^{-1}_{3, 2},\\
&f_{1, 2, (m_{1}, m_{2}, m_{3})}f_{1, 3, (m_{1}, m_{2}, m_{3})} = \alpha^{-1}_{2, (m_{1}, m_{2} - 1, m_{3})}\alpha^{-1}_{3, (m_{1}, m_{2} - 1, m_{3})}\pi_{3, 2}\alpha_{3, (m_{1}, m_{2} - m_{3}, m_{3})}\pi^{-1}_{3, 2}, \\
&f_{4, 2, (m_{1}, m_{2}, m_{3})} = \beta_{2, (m_{1}, m_{2}, m_{3}+1)}\pi_{1, 3}\beta^{-1}_{2, (m_{1}, m_{2}, m_{3} - m_{1})}\pi^{-1}_{1, 3},\\
&f_{4, 3, (m_{1}, m_{2}, m_{3})} = \beta_{3, (m_{1}, m_{2}, m_{3}+1)}\pi_{1, 3}\beta^{-1}_{3, (m_{1}, m_{2}, m_{3} - m_{1})}\pi^{-1}_{1, 3},\\
&f_{4, 1, (m_{1}, m_{2}, m_{3})}f_{4, 3, (m_{1}, m_{2}, m_{3})} = \beta_{3, (m_{1}, m_{2}, m_{3}+1)}\beta_{1, (m_{1}, m_{2}, m_{3}+1)}\pi_{1, 3}\beta^{-1}_{1, (m_{1}, m_{2}, m_{3} - m_{1})}\pi^{-1}_{1, 3}.
\end{split}
\end{equation*}
Furthermore, it is straightforward to verify that
\begin{equation*}
\begin{split}
&f_{2, 1, (m_{1}, m_{2}, m_{3})} = \sigma^{-1}_{2, 3}f_{1, 1, (m_{1}, m_{3}, m_{2})}\sigma_{2, 3},\\
&f_{2, 2, (m_{1}, m_{2}, m_{3})} = \sigma^{-1}_{2, 3}f_{1, 3, (m_{1}, m_{3}, m_{2})}\sigma_{2, 3},\\
&f_{2, 3, (m_{1}, m_{2}, m_{3})} = \sigma^{-1}_{2, 3}f_{1, 2, (m_{1}, m_{3}, m_{2})}\sigma_{2, 3},\\ 
&f_{3, 1, (m_{1}, m_{2}, m_{3})} = \sigma^{-1}_{2, 3}\beta^{-1}_{1, (m_{1}, m_{3}, m_{2})}\sigma_{2, 3},\\
&f_{3, 2, (m_{1}, m_{2}, m_{3})} = \sigma^{-1}_{2, 3}\beta^{-1}_{3, (m_{1}, m_{3}, m_{2})}\sigma_{2, 3},\\ 
&f_{3, 3, (m_{1}, m_{2}, m_{3})} = \sigma^{-1}_{2, 3}\beta^{-1}_{2, (m_{1}, m_{3}, m_{2})}\sigma_{2, 3}.
\end{split}
\end{equation*}
Hence, by the above equations, it follows that $f_{j, i, (m_{1}, m_{2}, m_{3})} \in B_{1}$ for all $i = 1, 2, 3$ and $j = 1, \ldots, 4$. 

Thus, our claim is true, that is, $\beta_{i, (m_{1}, m_{2}, m_{3})} \in B_{1}$ for all $i = 1, 2, 3$ and $f_{j, i, (m_{1}, m_{2}, m_{3})} \in B_{1}$ for all $i = 1, 2, 3$ and $j = 1, 2, 3, 4$. Hence, it follows from Lemma \ref{leem3} that $H_{3} = B_{1}$.
\end{proof}

\begin{lemma}\label{leem5}
$H_{3}= \langle \alpha_{1, (m_{1}, m_{2}, m_{3})}, \alpha_{2, (m_{1}, m_{2}, m_{3})}, \alpha_{3, (m_{1}, m_{2}, m_{3})}: m_{1}, m_{2}, m_{3} \geq 0\rangle^{T_{3}}$.
\end{lemma}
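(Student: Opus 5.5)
Write $B_{2}$ for the normal closure under $T_{3}$ of the $\alpha_{i,(m_{1},m_{2},m_{3})}$ with $i=1,2,3$ and $m_{1},m_{2},m_{3}\geq 0$. By Lemma \ref{leem4} it suffices to show that every $\alpha_{i,(m_{1},m_{2},m_{3})}$ with arbitrary integer exponents lies in $B_{2}$. The natural tool is conjugation by the inversions $k_{1},k_{2},k_{3}$. These are tame, so conjugation by them preserves $B_{2}$, and they flip the signs of the exponents in the conjugating element $g$. The price is that they also invert the letters inside $A$, $B$ and the final entry $x_{i}$.

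I would first compute $k_{j}\alpha_{i,(m_{1},m_{2},m_{3})}k_{j}^{-1}$ for each $j$. Take $j=2$. Then $k_{2}(A)=[x_{1},x_{2}^{-1}]=(A^{-1})^{x_{2}^{-1}}$, while $B$ is unchanged. Also $x_{2}^{m_{2}}\mapsto x_{2}^{-m_{2}}$, and $x_{i}$ becomes $x_{i}^{-1}$ when $i=2$. Using Lemma \ref{leem2} (5), (8) and (9), the inverse on $A$ and the inverse on $x_{i}$ only contribute a sign. So I expect
\begin{equation*}
k_{2}\alpha_{i,(m_{1},m_{2},m_{3})}k_{2}^{-1}=\alpha_{i,(m_{1},-m_{2}-1,m_{3})}^{\pm1}.
\end{equation*}
For $j=3$ the same computation should give $\alpha_{i,(m_{1},m_{2},-m_{3}-1)}^{\pm1}$.

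For $j=1$ the situation is more delicate, since $x_{1}$ itself is inverted. One has $k_{1}\alpha k_{1}^{-1}(x_{1})=(x_{1}^{-1}u')^{-1}$, where $u'$ is central, so this equals $x_{1}u'^{-1}$ with no problem. But $A$ and $B$ both become conjugates by $x_{1}^{-1}$ of their inverses, and Lemma \ref{leem2} (9) then lets these conjugations cancel or shift. I expect
\begin{equation*}
k_{1}\alpha_{i,(m_{1},m_{2},m_{3})}k_{1}^{-1}=\alpha_{i,(-m_{1},m_{2},m_{3})}^{\pm1}
\end{equation*}
or a shift of the form $-m_{1}\pm 1$. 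In either case the map $m\mapsto -m-1$ or $m\mapsto -m$ sends negative integers to nonnegative ones. If a shift by one goes the wrong way, I would compose with $\sigma_{2,3}$ or redo the bookkeeping through Lemma \ref{leem2} (5). The key point is that each involution $m\mapsto -m-c$ with $c\in\{0,1\}$ pairs every negative value with a nonnegative one.

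Given these three formulas, any triple $(m_{1},m_{2},m_{3})$ can be moved to a triple with all entries nonnegative by applying the relevant $k_{j}$ to each negative coordinate. The coordinates are handled independently, because each $k_{j}$ only alters the $j$-th exponent and preserves the index $i$. Hence every $\alpha_{i,(m_{1},m_{2},m_{3})}$ is a $T_{3}$-conjugate of an element of $B_{2}$ or its inverse, and so lies in $B_{2}$. Lemma \ref{leem4} then gives $H_{3}=B_{2}$.

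The main obstacle is the exact sign and shift bookkeeping in the $k_{1}$ computation. There the commutator $[A^{g},B,x_{i}]$ has $x_{1}$ appearing in $A$, in $B$, in $g$, and possibly in $x_{i}$. One must check via Lemma \ref{leem2} (9) that the conjugating exponent transforms as an affine involution, not into a mixture of several $\alpha$'s. If a mixture does appear, as a product of two terms, I would instead argue by induction on $\sum|m_{\ell}|$ over the negative coordinates, with the leftover terms having smaller absolute exponent.
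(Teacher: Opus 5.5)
Your plan is correct and is essentially the paper's proof: conjugating by $k_{1},k_{2},k_{3}$ sends $\alpha_{i,(m_{1},m_{2},m_{3})}$ to $\alpha^{\pm1}_{i,(\cdot)}$ with only the corresponding exponent changed by an affine involution, and Lemma \ref{leem4} then finishes. One bookkeeping correction is needed: $k_{1}$ gives exactly $-m_{1}$ and $k_{2}$ gives $-m_{2}-1$, but for $k_{3}$ the conjugating factor $x_{3}^{-1}$ sits on $B$, so Lemma \ref{leem2} (9) moves it to $A$ as $x_{3}$ and the exponent becomes $1-m_{3}$, not $-m_{3}-1$ (this is the paper's $k^{-1}_{3}\alpha^{-1}_{1,(m_{1},m_{2},m_{3}+1)}k_{3}=\alpha_{1,(m_{1},m_{2},-m_{3})}$); since $m\mapsto 1-m$ also sends negative integers to nonnegative ones, your argument goes through unchanged.
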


\begin{proof}
Let $B_{2} = \langle \alpha_{1, (m_{1}, m_{2}, m_{3})}, \alpha_{2, (m_{1}, m_{2}, m_{3})}, \alpha_{3, (m_{1}, m_{2}, m_{3})}: m_{1}, m_{2}, m_{3} \geq 0\rangle^{T_{3}}$. By Lemma \ref{leem4}, it suffices to prove that $\alpha_{i, (m_{1}, m_{2}, m_{3})} \in B_{2}$ for all $i = 1, 2, 3$ and $m_{1}, m_{2}, m_{3} \in \mathbb{Z}$. Let $m_{1}, m_{2}, m_{3} \in \mathbb{Z}$. Throughout the proof, we write $h = x_{2}^{m_{2}}x_{3}^{m_{3}}$. By a direct calculation, we get
\begin{equation*}
k^{-1}_{1}\alpha_{1, (m_{1}, m_{2}, m_{3})}k_{1}(x_{1}) = [[x^{-1}_{1}, x_{2}]^{x^{-m_{1}}_{1}h}, [x^{-1}_{1}, x_{3}], x^{-1}_{1}]^{-1}x_{1},
\end{equation*}
and thus, 
\begin{equation*}
\begin{split}
k^{-1}_{1}\alpha_{1, (m_{1}, m_{2}, m_{3})}k_{1}(x_{1}) = x_{1}[[x^{-1}_{1}, x_{2}]^{x^{-m_{1}}_{1}h}, [x^{-1}_{1}, x_{3}], x^{-1}_{1}]^{-1}.
\end{split}
\end{equation*}
By our notation, by the group commutator identity $[a^{-1}, b] = ([a, b]^{-1})^{a^{-1}}$ and by Lemma \ref{leem2} (8), we get
\begin{equation*}
k_{1}^{-1}\alpha_{1, (m_{1}, m_{2}, m_{3})}k_{1}(x_{1}) = x_{1}[(A^{-1})^{x^{-m_{1}-1}_{1}h}, (B^{-1})^{x^{-1}_{1}}, x^{-1}_{1}]^{-1} = x_{1}[A^{x^{-m_{1}-1}_{1}h}, B^{x^{-1}_{1}}, x_{1}].
\end{equation*}
Since, by Lemma \ref{leem2} (9) and Lemma \ref{leem2} (5), 
\begin{equation*}
[A^{x^{-m_{1}-1}_{1}h}, B^{x^{-1}_{1}}, x_{1}] = [A^{x^{-m_{1}-1}_{1}hx_{1}}, B, x_{1}] = [A^{x^{-m_{1}}_{1}h}, B, x_{1}] = [A^{x^{-m_{1}}_{1}h}, B, x_{1}],
\end{equation*}
it follows from the above equation that 
\begin{equation*}
k^{-1}_{1}\alpha_{1, (m_{1}, m_{2}, m_{3})}k_{1}(x_{1}) = x_{1}[A^{x^{-m_{1}}_{1}h}, B, x_{1}] = \alpha_{1, (-m_{1}, m_{2}, m_{3})}(x_{1}). 
\end{equation*}
Furthermore, since 
\begin{equation*}
k^{-1}_{1}\alpha_{1, (m_{1}, m_{2}, m_{3})}k_{1}(x_{j}) = \alpha_{1, (-m_{1}, m_{2}, m_{3})}(x_{j}) = x_{j} \text{ for } j = 2, 3, 
\end{equation*}
it follows that 
\begin{equation*}
k^{-1}_{1}\alpha_{1, (m_{1}, m_{2}, m_{3})}k_{1} = \alpha_{1, (-m_{1}, m_{2}, m_{3})}.
\end{equation*}
By analogous calculations, we get
\begin{equation*}
\begin{split}
k^{-1}_{2}\alpha^{-1}_{1, (m_{1}, m_{2}-1, m_{3})}k_{2} &= \alpha_{1, (m_{1}, -m_{2}, m_{3})},\\
k^{-1}_{3}\alpha^{-1}_{1, (m_{1}, m_{2}, m_{3}+1)}k_{3} &= \alpha_{1, (m_{1}, m_{2}, -m_{3})},\\ 
k^{-1}_{1}\alpha^{-1}_{2, (m_{1}, m_{2}, m_{3})}k_{1} &= \alpha_{2, (-m_{1}, m_{2}, m_{3})},\\
k^{-1}_{2}\alpha_{2, (m_{1}, m_{2}-1, m_{3})}k_{2} &= \alpha_{2, (m_{1}, -m_{2}, m_{3})},\\
k^{-1}_{3}\alpha^{-1}_{2, (m_{1}, m_{2}, m_{3}+1)}k_{3} &= \alpha_{2, (m_{1}, m_{2}, -m_{3})},\\
k^{-1}_{1}\alpha^{-1}_{3, (m_{1}, m_{2}, m_{3})}k_{1} &= \alpha_{3, (-m_{1}, m_{2}, m_{3})},\\
k^{-1}_{2}\alpha^{-1}_{3, (m_{1}, m_{2}-1, m_{3})}k_{2} &= \alpha_{3, (m_{1}, -m_{2}, m_{3})},\\
k^{-1}_{3}\alpha_{3, (m_{1}, m_{2}, m_{3}+1)}k_{3} &= \alpha_{3, (m_{1}, m_{2}, -m_{3})}.
\end{split}
\end{equation*}
By the above equations, we deduce that $\alpha_{i, (m_{1}, m_{2}, m_{3})} \in B_{2}$ for all $i = 1, 2, 3$ and $m_{1}, m_{2}, m_{3} \in \mathbb{Z}$. Hence, by Lemma \ref{leem4}, $H_{3} \subseteq B_{2}$ and thus, $H_{3} = B_{2}$.
\end{proof}

\section{Technical Commutator Identities}\label{sec4}

In this section, we give all the computational results we will use later. 

\begin{lemma}\label{leem6}
Let $m_{1}, m_{2}, m_{3} \geq 0$ and let $\tau_{1}$ be the tame automorphism of $G_{3}$ defined by $\tau_{1}(x_{1}) = x_{2}x_{1}$, $\tau_{1}(x_{i}) = x_{i}$ for $i = 2, 3$. Then, the following hold:
\begin{enumerate}
\item 
\begin{enumerate}
\item $\alpha^{-1}_{2, (0, 0, 0)}\tau_{1}\alpha_{2, (0, 0, 0)}\tau^{-1}_{1} = k^{-1}_{1}\beta_{2, (0, 0, 0)}k_{1}$.

\item $\alpha^{-1}_{3, (0, 0, 0)}\tau_{1}\alpha_{3, (0, 0, 0)}\tau^{-1}_{1} = k^{-1}_{1}\beta_{3, (0, 0, 0)}k_{1}$.

\item $\alpha_{2, (0, 0, 0)}\alpha_{1, (0, 0, 0)}\tau_{1}\alpha^{-1}_{1, (0, 0, 0)}\tau^{-1}_{1} = k^{-1}_{1}\beta^{-1}_{1, (0, 0, 0)}\beta_{2, (0, 0, 0)}k_{1}$.
\end{enumerate}

\item
\begin{enumerate}
\item $\sigma_{2, 3}\alpha_{1, (m_{1}, m_{3}, -m_{2})}\sigma^{-1}_{2, 3} = k^{-1}_{3}k^{-1}_{1}\alpha_{1, (m_{1}, m_{2}, m_{3}+1)}k_{1}k_{3}$.

\item $\sigma_{2, 3}\alpha_{2, (m_{1}, m_{3}, -m_{2})}\sigma^{-1}_{2, 3} = k^{-1}_{3}k^{-1}_{1}\alpha_{3, (m_{1}, m_{2}, m_{3}+1)}k_{1}k_{3}$.

\item $\sigma_{2, 3}\alpha_{3, (m_{1}, m_{3}, -m_{2})}\sigma^{-1}_{2, 3} = k^{-1}_{3}k^{-1}_{1}\alpha^{-1}_{2, (m_{1}, m_{2}, m_{3}+1)}k_{1}k_{3}$.
\end{enumerate}

\item $\sigma^{-1}_{2, 3}\alpha^{-1}_{2, (0, 0, 0)}\sigma_{2, 3} = \alpha_{3, (0, 0, 0)}$.
\end{enumerate}
\end{lemma}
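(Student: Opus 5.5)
Each identity in the statement is an equality of two elements of $H_{3}$. Every map involved fixes $x_{2}$ and $x_{3}$ and sends $x_{1}$ to $x_{1}$ times an element of $[G_{3}^{\prime\prime}, G_{3}]$. The only exception is $\tau_{1}$, and after conjugating, $\alpha_{i}^{-1}\tau_{1}\alpha_{i}\tau_{1}^{-1}$ also fixes $x_{2}, x_{3}$ modulo central terms. So it suffices, as in the proofs of Lemmas \ref{leem4} and \ref{leem5}, to evaluate both sides on $x_{1}$ (and check that $x_{2}, x_{3}$ are fixed). Because $H_{3}$ is abelian and acts trivially on $G_{3}^{\prime\prime}$ (Lemma \ref{extrlem1}), a product of elements of $H_{3}$ acts on $x_{1}$ by multiplying the corresponding central correction terms. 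Hence each identity reduces to an equality of elements of $[G_{3}^{\prime\prime}, G_{3}]$, written in the normal form $[A^{g}, X, x_{i}]$ with $X \in \{B, C\}$, where $g$ depends only on its image modulo $G_{3}^{\prime}$ (Lemma \ref{leem2} (5)).

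For (1), I would first compute $\tau_{1}\alpha_{i,(0,0,0)}\tau_{1}^{-1}(x_{1})$. Since $\tau_{1}^{-1}(x_{1}) = x_{2}^{-1}x_{1}$ and $\alpha$ fixes $x_{2}$, this gives $x_{1}[[x_{2}x_{1}, x_{2}], [x_{2}x_{1}, x_{3}], x_{i}]$, using that the correction is central up to one more commutator and that the $x_{2}$ factors cancel. Expanding with $[ab,c] = [a,c]^{b}[b,c]$ gives $[x_{2}x_{1}, x_{2}] = A^{x_{1}}$ and $[x_{2}x_{1}, x_{3}] = C^{x_{1}}B$. Lemma \ref{leem2} (3), (6), (9) then splits the correction as $[A, B^{x_{1}^{-1}}\!, x_{i}]\,[A, C, x_{i}]$, suitably conjugated. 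Composing with $\alpha_{i,(0,0,0)}^{-1}$ cancels the $B$-part up to the conjugation shift and leaves a $[A^{x_{1}^{\pm1}}, C, x_{i}]$-type term. On the other side, $k_{1}^{-1}\beta_{i,(0,0,0)}k_{1}(x_{1})$ is computed exactly as in the proof of Lemma \ref{leem5}: use $[x_{1}^{-1}, x_{2}] = (A^{-1})^{x_{1}^{-1}}$ and Lemma \ref{leem2} (8) to turn inverses into signs. For $i = 2, 3$, replacing $x_{i}$ leaves the last entry untouched, which yields (a) and (b). For $i = 1$ the last entry $x_{2}x_{1}$ splits by Lemma \ref{leem2} (7) into an $x_{1}$-part and an extra $x_{2}$-part. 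That extra piece is precisely the $\alpha_{2}$ and $\beta_{2}$ factors that appear in (c).

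For (2) and (3), I would conjugate by $\sigma_{2,3}$. This swaps $A \leftrightarrow B$ and sends $C \mapsto C^{-1}$ up to conjugation ($[x_{3},x_{2}] = C^{-1}$). By Lemma \ref{leem2} (8) and (9), $[B^{g'}, A, x_{j}]$ becomes $[A^{g'^{-1}}, B, x_{j}]^{-1}$. The exponent $g' = x_{1}^{m_{1}}x_{3}^{m_{3}}x_{2}^{-m_{2}}$ (after the swap, read modulo $G_{3}^{\prime}$) then gets inverted. Conjugating by $k_{1}k_{3}$ and applying the same inversion rules as in Lemma \ref{leem5} flips the signs of the $x_{1}$ and $x_{3}$ exponents and produces the shift $m_{3}+1$, coming from $[x_{1}, x_{3}^{-1}] = (B^{-1})^{x_{3}^{-1}}$. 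For (2)(c) and (3), the last entry $x_{3} \mapsto x_{2}$ under $\sigma_{2,3}$, together with the sign from $k_{3}$ (Lemma \ref{leem2} (8)), explains the index change and the inverse.

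The main obstacle is bookkeeping rather than ideas: tracking the conjugating exponent $g$ modulo $G_{3}^{\prime}$ through each inversion and through the transfer rule $[u^{x}, v^{y}, z] = [u^{xy^{-1}}, v, z]$, and tracking the signs produced by $k_{i}$ via Lemma \ref{leem2} (8). I would carry the exponent vector explicitly at each step and do part (1)(c) last, since there the splitting of $x_{2}x_{1}$ in the final slot creates the two additional factors.
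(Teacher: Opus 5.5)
Your route is the paper's: evaluate both sides on $x_{1}$, use that everything lies in the abelian group $H_{3}$, and reuse the $k_{i}$-conjugation rules from Lemma \ref{leem5}. Your outline of (2) and (3) matches the paper's computation. Part (1), however, has two concrete problems.

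\textbf{A slip in (a) and (b).} You have $[x_{2}x_{1},x_{2}] = [x_{2},x_{2}]^{x_{1}}[x_{1},x_{2}] = A$, not $A^{x_{1}}$. Write $c_{j} = [A,B,x_{j}]$ and $d_{j} = [A^{x_{1}^{-1}},C,x_{j}]$. With the correct value, $\tau_{1}\alpha_{i,(0,0,0)}\tau_{1}^{-1}(x_{1}) = x_{1}c_{i}d_{i}$ for $i=2,3$, and $x_{1}c_{1}c_{2}d_{1}d_{2}$ for $i=1$. The $B$-part then cancels \emph{exactly} against $\alpha^{-1}_{i,(0,0,0)}$, and what remains, $x_{1}\mapsto x_{1}d_{i}$, is precisely $k_{1}^{-1}\beta_{i,(0,0,0)}k_{1}$. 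With your value, the $B$-part would be $[A^{x_{1}},B,x_{i}]\neq c_{i}$ and the $C$-part would be $[A,C,x_{i}]\neq d_{i}$. Nothing on the right-hand side can absorb a ``conjugation shift'', so (a) and (b) do not follow from the computation as you wrote it. Phrases like ``up to the conjugation shift'' and ``$[A^{x_{1}^{\pm1}},C,x_{i}]$-type term'' hide this.

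\textbf{(1)(c) is false as printed.} Once the signs are tracked, the identity fails. Since $k_{1}^{-1}\beta_{1,(0,0,0)}k_{1}(x_{1}) = x_{1}d_{1}^{-1}$ and $k_{1}^{-1}\beta_{2,(0,0,0)}k_{1}(x_{1}) = x_{1}d_{2}$, the left- and right-hand sides send $x_{1}$ respectively to
\[
x_{1}\,c_{2}c_{1}(c_{1}c_{2}d_{1}d_{2})^{-1} = x_{1}d_{1}^{-1}d_{2}^{-1}
\qquad\text{and}\qquad
x_{1}d_{1}d_{2}.
\]
These are different elements:
\begin{itemize}
\item Because $[F_{3}^{\prime\prime},F_{3},F_{3}]\subseteq\gamma_{6}(F_{3})$, the quotient $G_{3}/\gamma_{6}(G_{3})$ is free nilpotent of class $5$.
\item In it, $d_{1}d_{2}$ maps to $[[[X_{1},X_{2}],[X_{2},X_{3}]],X_{1}]+[[[X_{1},X_{2}],[X_{2},X_{3}]],X_{2}]$ in the free Lie ring on the images $X_{j}$ of the $x_{j}$.
\item The two summands are nonzero and have different multidegrees, and $\gamma_{5}/\gamma_{6}$ is torsion-free. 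Hence $(d_{1}d_{2})^{2}\neq 1$.
\end{itemize}
What is actually true is
\[
\alpha_{2,(0,0,0)}\alpha_{1,(0,0,0)}\tau_{1}\alpha^{-1}_{1,(0,0,0)}\tau^{-1}_{1} = k_{1}^{-1}\beta^{-1}_{2,(0,0,0)}\beta_{1,(0,0,0)}k_{1}.
\]
So your sentence that the extra $x_{2}$-piece is ``precisely the $\alpha_{2}$ and $\beta_{2}$ factors that appear in (c)'' is exactly where the argument breaks. The factor $c_{2}$ is indeed cancelled by $\alpha_{2,(0,0,0)}$, but $d_{1}$ and $d_{2}$ both survive with exponent $-1$.

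The paper's own proof has the same defect: it says the result ``may be easily deduced'' from exactly these formulas. The error is harmless downstream, since Lemma \ref{leem9} (1)(a) only needs some product $\beta^{\pm1}_{1,(0,0,0)}\beta^{\mp1}_{2,(0,0,0)}$ to lie in $B_{3}$. Still, no correct proof of (1)(c) as stated exists.
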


\begin{proof}
\begin{enumerate}
\item  By arguments similar to those in the proof of Lemma \ref{leem4} and by our notation, we get
\begin{equation*}
\begin{split}
\tau_{1}\alpha_{2, (0, 0, 0)}\tau^{-1}_{1}(x_{1}) &= x_{1}[A, B, x_{2}][A^{x^{-1}_{1}}, C, x_{2}], \\
\tau_{1}\alpha_{3, (0, 0, 0)}\tau^{-1}_{1}(x_{1}) &= x_{1}[A, B, x_{3}][A^{x^{-1}_{1}}, C, x_{3}], \\
\tau_{1}\alpha_{1, (0, 0, 0)}\tau^{-1}_{1}(x_{1}) &= x_{1}[A, B, x_{1}][A, B, x_{2}][A^{x^{-1}_{1}}, C, x_{1}][A^{x^{-1}_{1}}, C, x_{2}],
\end{split}
\end{equation*}
and, by arguments similar to those in the proof of Lemma \ref{leem5}, 
\begin{equation*}
\begin{split}
k^{-1}_{1}\beta_{2, (0, 0, 0)}k_{1}(x_{1}) &= x_{1}[A^{x^{-1}_{1}}, C, x_{2}], \\
k^{-1}_{1}\beta_{3, (0, 0, 0)}k_{1}(x_{1}) &= x_{1}[A^{x^{-1}_{1}}, C, x_{3}], \\
k^{-1}_{1}\beta_{1, (0, 0, 0)}k_{1}(x_{1}) &= x_{1}[A^{x^{-1}_{1}}, C, x_{1}]^{-1}.
\end{split}
\end{equation*}
Thus, the result may be easily deduced.

\item By the proof of Lemma \ref{leem5}, 
\begin{equation*}
k^{-1}_{3}k^{-1}_{1}\alpha_{1, (m_{1}, m_{2}, m_{3}+1)}k_{1}k_{3} = k^{-1}_{3}\alpha_{1, (-m_{1}, m_{2}, m_{3}+1)}k_{3} = \alpha^{-1}_{1, (-m_{1}, m_{2}, -m_{3})}
\end{equation*}
Furthermore, 
\begin{equation*}
\sigma_{2, 3}\alpha_{1, (m_{1}, m_{3}, -m_{2})}\sigma^{-1}_{2, 3}(x_{1}) = \sigma_{2, 3}(x_{1}[A^{x_{1}^{m_{1}}x_{2}^{m_{3}}x_{3}^{-m_{2}}}, B, x_{1}]) = x_{1}[B^{x_{1}^{m_{1}}x_{3}^{m_{3}}x_{2}^{-m_{2}}}, A, x_{1}].
\end{equation*}
Since, by Lemma \ref{leem2} (8), Lemma \ref{leem2} (9) and Lemma \ref{leem2} (5),
\begin{equation*}
\begin{split}
[B^{x_{1}^{m_{1}}x_{3}^{m_{3}}x_{2}^{-m_{2}}}, A, x_{1}] &= [A, B^{x_{1}^{m_{1}}x_{3}^{m_{3}}x_{2}^{-m_{2}}}, x_{1}]^{-1} = [A^{(x_{1}^{m_{1}}x_{3}^{m_{3}}x_{2}^{-m_{2}})^{-1}}, B, x_{1}]^{-1}\\
& = [A^{x_{1}^{-m_{1}}x_{2}^{m_{2}}x_{3}^{-m_{3}}}, B, x_{1}]^{-1},
\end{split}
\end{equation*}
it follows that 
\begin{equation*}
\sigma_{2, 3}\alpha_{1, (m_{1}, m_{3}, -m_{2})}\sigma^{-1}_{2, 3}(x_{1}) = x_{1}[A^{x_{1}^{-m_{1}}x_{2}^{m_{2}}x_{3}^{-m_{3}}}, B, x_{1}]^{-1} = \alpha^{-1}_{1, (-m_{1}, m_{2}, -m_{3})}(x_{1}). 
\end{equation*}
Since also 
\begin{equation*}
\sigma_{2, 3}\alpha_{1, (m_{1}, m_{3}, -m_{2})}\sigma^{-1}_{2, 3}(x_{j}) = \alpha^{-1}_{1, (-m_{1}, m_{2}, -m_{3})}(x_{j}) = x_{j} \text{ for } j = 2, 3, 
\end{equation*}
we get 
\begin{equation*}
\sigma_{2, 3}\alpha_{1, (m_{1}, m_{3}, -m_{2})}\sigma^{-1}_{2, 3} = \alpha^{-1}_{1, (-m_{1}, m_{2}, -m_{3})}. 
\end{equation*}
Thus, 
\begin{equation*}
\sigma_{2, 3}\alpha_{1, (m_{1}, m_{3}, -m_{2})}\sigma^{-1}_{2, 3} = k^{-1}_{3}k^{-1}_{1}\alpha_{1, (m_{1}, m_{2}, m_{3}+1)}k_{1}k_{3}. 
\end{equation*}
Parts 2 (b) and 2 (c) follow similarly.

\item This follows from a direct calculation.
\end{enumerate}
\end{proof}

Recall that, by Lemma \ref{leem1} (1), any endomorphism of $G_{n}$ that induces an automorphism of $M_{n}$ is an automorphism. Let $\theta$ be the automorphism of $G_{3}$ satisfying the conditions
\begin{equation*}
\begin{split}
\theta(x_{1}) &\equiv x_{1}w \pmod {[G_{3}^{\prime\prime}, G_{3}]}, \text{ where } w \in G_{3}^{\prime\prime}, \\
\theta(x_{\ell}) &\equiv x_{\ell}\pmod {[G_{3}^{\prime\prime}, G_{3}]} \text{ for } \ell = 2, 3.
\end{split}
\end{equation*}
For $k \in \{1, 2, 3\}$, we associate to $\theta$ an automorphism $(\theta)_{k}$ defined by 
\begin{equation*}
(\theta)_{k}(x_{1}) = x_{1}[w, x_{k}], (\theta)_{k}(x_{\ell}) = x_{\ell} \text{ for } \ell = 2, 3. 
\end{equation*}
Moreover, for $i, j \in \{1, 2, 3\}$, with $i \neq j$, let $\tau_{i,j}$ be the tame automorphism of $G_{3}$ defined by 
\begin{equation*}
\tau_{i, j}(x_{i}) = x_{i}[x_{i}, x_{j}], \tau_{i, j}(x_{\ell}) = x_{\ell} \text{ for } \ell \neq i. 
\end{equation*}
Using this notation, we prove the following result.

\begin{lemma}\label{leem7}
\begin{enumerate}
\item 
\begin{enumerate}

\item For all $i, j \in \{1, 2, 3\}$, with $i \neq j$, the automorphism $\tau_{i, j}\theta \tau^{-1}_{i, j}$ satisfies the conditions
\begin{equation*}
\begin{split}
\tau_{i, j}\theta \tau^{-1}_{i, j}(x_{1}) &\equiv x_{1}\tau_{i, j}(w) \pmod {[G_{3}^{\prime\prime}, G_{3}]}\\
\tau_{i, j}\theta \tau^{-1}_{i, j}(x_{\ell}) &\equiv x_{\ell} \pmod {[G_{3}^{\prime\prime}, G_{3}]} \text{ for } \ell = 2, 3.
\end{split}
\end{equation*}
\item For any $k \in \{1, 2, 3\}$ and $i, j \in \{1, 2, 3\}$, with $i \neq j$, 
\begin{equation*}
\tau_{i, j}(\theta)_{k} \tau^{-1}_{i, j} = (\tau_{i, j}\theta \tau^{-1}_{i, j})_{k}.
\end{equation*}
\end{enumerate}

\item Let $f$ and $g$ be the automorphisms of $G_{3}$ satisfying the conditions
\begin{equation*}
\begin{split}
f(x_{1}) &= x_{1}w_{1}, \text{ where } w_{1} \in G_{3}^{\prime\prime}, f(x_{\ell}) = x_{\ell} \text{ for } \ell = 2, 3, \\
g(x_{1}) &= x_{1}w_{2}, \text{ where } w_{2} \in G_{3}^{\prime\prime}, g(x_{\ell}) = x_{\ell} \text{ for } \ell = 2, 3.
\end{split}
\end{equation*}
Then:
\begin{enumerate}
\item $(fg)_{k} = (f)_{k}(g)_{k}$ for all $k \in \{1, 2, 3\}$. 

\item Let $\phi_{3}$ be the group homomorphism from ${\rm Aut}(G_{3})$ into ${\rm Aut}(C_{3})$, induced from the natural mapping from $G_{3}$ onto $C_{3}$. If $\phi_{3}(f) = \phi_{3}(g)$, then $(f)_{k} = (g)_{k}$ for all $k \in \{1, 2, 3\}$.
\end{enumerate}
\end{enumerate}
\end{lemma}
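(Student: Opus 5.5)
The plan is to verify each part by evaluating both sides on the generators $x_{1}, x_{2}, x_{3}$. All automorphisms involved are either tame or congruent to the identity modulo $G_{3}^{\prime\prime}$. We use throughout that $[G_{3}^{\prime\prime}, G_{3}]$ is central in $G_{3}$ (it is the centre, by \cite[Theorem 6.1]{ngule}). We also use Lemma \ref{extrlem1}(1): any automorphism inducing the identity on $G_{3}/G_{3}^{\prime\prime}$ fixes $G_{3}^{\prime\prime}$ pointwise. Since $\theta$ satisfies $\theta(x_{1}) \equiv x_{1}w$ with $w \in G_{3}^{\prime\prime}$, this applies to $\theta$, to $f$, to $g$, and to all the $(\cdot)_{k}$.

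For 1(a), write $\theta(x_{1}) = x_{1}wc_{1}$ and $\theta(x_{\ell}) = x_{\ell}c_{\ell}$ with $c_{i} \in [G_{3}^{\prime\prime}, G_{3}]$.
\begin{itemize}
\item The automorphism $\tau_{i,j}^{-1}$ sends $x_{i}$ to $x_{i}v$ for some $v \in G_{3}^{\prime}$ and fixes the other generators.
\item Hence $\theta\tau_{i,j}^{-1}(x_{i}) = \theta(x_{i})\theta(v)$. Working modulo the centre $[G_{3}^{\prime\prime}, G_{3}]$, the map $\theta$ acts on $G_{3}^{\prime}$ as conjugation-free substitution $x_{1} \mapsto x_{1}w$.
\item Because $w$ is central modulo $[G_{3}^{\prime\prime}, G_{3}]$, commutators are unchanged: $[x_{1}w, y] \equiv [x_{1}, y]$. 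So $\theta(v) \equiv v$, and $\theta$ fixes $G_{3}^{\prime}$ modulo $[G_{3}^{\prime\prime}, G_{3}]$.
\item Applying $\tau_{i,j}$, we get $\tau_{i,j}\theta\tau_{i,j}^{-1}(x_{1}) \equiv \tau_{i,j}(x_{1}w\tau_{i,j}^{-1}(x_{1})^{-1}x_{1}\cdots)$. More cleanly, the automorphism differs from the identity on $x_{m}$ by $\tau_{i,j}(\theta(y)y^{-1})$ with $y = \tau_{i,j}^{-1}(x_{m})$.
\item We have $\theta(y)y^{-1} \equiv w^{e}$ modulo $[G_{3}^{\prime\prime}, G_{3}]$, where $e$ is the exponent sum of $x_{1}$ in $y$. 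This is $1$ for $m = 1$ and $0$ otherwise, since $y \equiv x_{m} \pmod{G_{3}^{\prime}}$.
\item This gives the two congruences of 1(a).
\end{itemize}

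For 1(b), compute $\tau_{i,j}(\theta)_{k}\tau_{i,j}^{-1}$ exactly, not just modulo the centre. Exactly the same computation applies with $w$ replaced by $[w, x_{k}]$, which is central, so there are no error terms. The result sends $x_{1} \mapsto x_{1}\tau_{i,j}([w, x_{k}])$ and fixes $x_{2}, x_{3}$. Now $\tau_{i,j}([w, x_{k}]) = [\tau_{i,j}(w), \tau_{i,j}(x_{k})]$, and $\tau_{i,j}(x_{k}) \equiv x_{k} \pmod{G_{3}^{\prime}}$. By Lemma \ref{leem2}(7), and since $[G_{3}^{\prime\prime}, G_{3}^{\prime}] = 1$ (Three Subgroups Lemma), this equals $[\tau_{i,j}(w), x_{k}]$.

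We must still check that $(\tau_{i,j}\theta\tau_{i,j}^{-1})_{k}$ is well defined, i.e.\ that it does not depend on the representative of $w$ modulo $[G_{3}^{\prime\prime}, G_{3}]$. This holds because $\tau_{i,j}(w) \in G_{3}^{\prime\prime}$ is determined modulo $[G_{3}^{\prime\prime}, G_{3}] \cap G_{3}^{\prime\prime}$. Any element there has trivial commutator with $x_{k}$, since $[G_{3}^{\prime\prime}, G_{3}, G_{3}] = 1$. The two sides of 1(b) therefore agree.

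For 2(a), compute $fg(x_{1}) = f(x_{1}w_{2}) = x_{1}w_{1}w_{2}$, using that $f$ fixes $G_{3}^{\prime\prime}$ by Lemma \ref{extrlem1}(1). So $(fg)_{k}$ sends $x_{1}$ to $x_{1}[w_{1}w_{2}, x_{k}] = x_{1}[w_{1}, x_{k}][w_{2}, x_{k}]$ by Lemma \ref{leem2}(6). The same computation gives $(f)_{k}(g)_{k}(x_{1})$, since $(f)_{k}$ fixes the central element $[w_{2}, x_{k}]$.

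For 2(b), the hypothesis $\phi_{3}(f) = \phi_{3}(g)$ means $x_{1}w_{1} \equiv x_{1}w_{2}$ modulo $[G_{3}^{\prime\prime}, G_{3}]$, so $w_{1}w_{2}^{-1}$ is central. Then $[w_{1}, x_{k}] = [w_{1}w_{2}^{-1}, x_{k}][w_{2}, x_{k}] = [w_{2}, x_{k}]$, again by Lemma \ref{leem2}(6).

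The main obstacle is 1(a): carefully justifying that $\theta$ acts trivially on $G_{3}^{\prime}$ modulo $[G_{3}^{\prime\prime}, G_{3}]$ even though $\theta(x_{1}) = x_{1}wc_{1}$. The key fact is that $w$ is central in $G_{3}/[G_{3}^{\prime\prime}, G_{3}]$ only in commutators with elements of $G_{3}$. Since $[G_{3}^{\prime\prime}, G_{3}]$ contains $[w, \cdot]$, this holds, and the rest is bookkeeping.
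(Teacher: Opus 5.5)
Your proof is correct. Parts 2(a) and 2(b) are essentially the paper's argument: Lemma \ref{extrlem1} (1) gives $fg(x_{1})=x_{1}w_{1}w_{2}$, then Lemma \ref{leem2} (6) is applied, and $\phi_{3}(f)=\phi_{3}(g)$ gives $w_{1}\equiv w_{2}$ modulo $Z=[G_{3}^{\prime\prime},G_{3}]$.

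For part 1 you take a more uniform route than the paper. The paper computes $\tau_{i,j}\theta\tau_{i,j}^{-1}$ on each generator by hand, using $\tau_{i,j}^{\pm 1}(x_{i})=x_{j}^{\mp 1}x_{i}x_{j}^{\pm 1}$, and splits into cases: $i\neq 1$, $i=1$, and the sub-case $\ell=i$, $j=1$. In that sub-case it must separately check $[x_{\ell},w^{-1}x_{1}^{-1}]\equiv[x_{\ell},x_{1}^{-1}]$. You instead observe once that $w$ is central modulo $Z$. Hence $\theta(y)\equiv y\,w^{e_{1}(y)}\pmod{Z}$ for every $y\in G_{3}$, where $e_{1}(y)$ is the exponent sum of $x_{1}$ in $y$. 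Writing $\tau_{i,j}\theta\tau_{i,j}^{-1}(x_{m})=\tau_{i,j}(\theta(y)y^{-1})\,x_{m}$ with $y=\tau_{i,j}^{-1}(x_{m})\equiv x_{m}\pmod{G_{3}^{\prime}}$ then gives both congruences of 1(a) at once. The same identity holds exactly for $(\theta)_{k}$, because $[w,x_{k}]$ is central in $G_{3}$. Together with the replacement $[\tau_{i,j}(w),\tau_{i,j}(x_{k})]=[\tau_{i,j}(w),x_{k}]$, which both proofs use, this gives 1(b).

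What your version buys is a case-free argument that works verbatim with any IA-automorphism in place of $\tau_{i,j}$. It also makes explicit that $(\cdot)_{k}$ is well defined, i.e.\ independent of the choice of $w$ modulo $Z$, which the paper uses silently. The paper's computation is more hands-on but longer.

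Some small clean-ups:
\begin{enumerate}
\item Delete the garbled expression $\tau_{i,j}(x_{1}w\tau_{i,j}^{-1}(x_{1})^{-1}x_{1}\cdots)$; your ``more cleanly'' line supersedes it.
\item In 2(a), $(f)_{k}$ fixes $[w_{2},x_{k}]$ because this element lies in $G_{3}^{\prime\prime}$ and Lemma \ref{extrlem1} (1) applies to $(f)_{k}$; centrality alone is not the reason.
\item Centrality of $Z$ follows directly from $[G_{3}^{\prime\prime},G_{3},G_{3}]=1$, so you do not need the citation identifying $Z$ as the full centre.
\end{enumerate}
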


\begin{proof}
\begin{enumerate}

\item 

\begin{enumerate}

\item Note that $\tau_{i, j}(x_{i}) = x^{-1}_{j}x_{i}x_{j}$ and $\tau^{-1}_{i, j}(x_{i}) = x_{j}x_{i}x^{-1}_{j}$. First assume that $i \neq 1$. Then, it follows directly that 
\begin{equation*}
\tau_{i, j}\theta \tau^{-1}_{i, j}(x_{1}) \equiv  x_{1}\tau_{i, j}(w) \pmod {[G_{3}^{\prime\prime}, G_{3}]}. 
\end{equation*}
Let $\ell \neq 1$. In the case that $\ell \neq i$ and in the case that $\ell = i$ and $j \neq 1$, it follows directly that $\tau_{i, j}\theta \tau^{-1}_{i, j}(x_{\ell}) \equiv  x_{\ell} \pmod {[G_{3}^{\prime\prime}, G_{3}]}$. Thus, we may assume that $\ell = i$ and $j = 1$. Then,  $\tau^{-1}_{\ell, 1}(x_{\ell}) = x_{\ell}[x_{\ell}, x^{-1}_{1}]$ and thus, $\tau_{\ell, 1}\theta \tau^{-1}_{\ell, 1}(x_{\ell}) \equiv \tau_{\ell,1}(x_{\ell}[x_{\ell}, w^{-1}x^{-1}_{1}])\pmod {[G_{3}^{\prime\prime}, G_{3}]}$. By the group commutator identity $[a, bc] = [a, c][a, b][a, b, c]$ and since $w \in G_{3}^{\prime\prime}$, it follows directly that $[x_{\ell}, w^{-1}x^{-1}_{1}] \equiv [x_{\ell}, x^{-1}_{1}] \pmod {[G_{3}^{\prime\prime}, G_{3}]}$ and therefore, $\tau_{\ell, 1}\theta \tau^{-1}_{\ell, 1}(x_{\ell}) \equiv \tau_{\ell, 1}(x_{\ell}[x_{\ell}, x^{-1}_{1}]) \equiv x_{\ell} \pmod {[G_{3}^{\prime\prime}, G_{3}]}$. Thus, 
\begin{equation*}
\tau_{\ell, 1}\theta \tau^{-1}_{\ell, 1}(x_{\ell}) \equiv x_{\ell} \pmod {[G_{3}^{\prime\prime}, G_{3}]} \text{ for } \ell = 2, 3.
\end{equation*}
Assume now that $i = 1$. Then, by a direct calculation, 
\begin{equation*}
\tau_{1, j}\theta \tau^{-1}_{1, j}(x_{1}) \equiv (x_{1}^{x_{j}}\tau_{1, j}(w))^{x^{-1}_{j}} \pmod {[G_{3}^{\prime\prime}, G_{3}]},
\end{equation*}
and thus, 
\begin{equation*}
\tau_{1, j}\theta \tau^{-1}_{1, j}(x_{1}) \equiv  x_{1}(\tau_{1 ,j}(w))^{x^{-1}_{j}} \equiv x_{1}\tau_{1, j}(w)[\tau_{1, j}(w), x^{-1}_{j}] \pmod {[G_{3}^{\prime\prime}, G_{3}]}.
\end{equation*}
Since $w \in G_{3}^{\prime\prime}$ and $G_{3}^{\prime\prime}$ is a fully invariant subgroup of $G_{3}$, we have $\tau_{i, j}(w) \in G^{\prime\prime}_{3}$. Hence, $[\tau_{i,j}(w), x^{-1}_{j}] \in [G_{3}^{\prime\prime}, G_{3}]$ and thus, by the above equation, 
\begin{equation*}
\tau_{1, j}\theta \tau^{-1}_{1, j}(x_{1}) \equiv x_{1}\tau_{1, j}(w) \pmod {[G_{3}^{\prime\prime}, G_{3}]}. 
\end{equation*}
Furthermore, it is straightforward to verify that 
\begin{equation*}
\tau_{1, j}\theta \tau^{-1}_{1, j}(x_{\ell}) \equiv x_{\ell} \pmod {[G_{3}^{\prime\prime}, G_{3}]} \text{ for } \ell =2, 3.
\end{equation*}
\item Let $k \in \{1, 2, 3\}$. First assume that $i = 1$. Then, by a direct calculation, 
\begin{equation*}
\tau_{1, j}(\theta)_{k} \tau^{-1}_{1, j}(x_{1}) = (x_{1}^{x_{j}}[\tau_{1, j}(w), \tau_{1, j}(x_{k})])^{x^{-1}_{j}}
\end{equation*}
and thus, 
\begin{equation*}
\tau_{1, j}(\theta)_{k} \tau^{-1}_{1, j}(x_{1}) = x_{1}[\tau_{1, j}(w), \tau_{1, j}(x_{k})]^{x^{-1}_{j}} = x_{1}[\tau_{1, j}(w), \tau_{1, j}(x_{k})]. 
\end{equation*}
Since $\tau_{1, j}(w) \in G_{3}^{\prime\prime}$, $\tau_{1, j}(x_{k}) \equiv x_{k} \mod G_{3}^{\prime}$ and $[G_{3}^{\prime\prime}, G_{3}^{\prime}] = \{1\}$, it follows that $[\tau_{1, j}(w), \tau_{1, j}(x_{k})] = [\tau_{1, j}(w), x_{k}]$. Thus, $\tau_{i,j}(\theta)_{k} \tau^{-1}_{i,j}(x_{1}) = x_{1}[\tau_{i,j}(w), x_{k}]$. Furthermore, $\tau_{1, j}(\theta)_{k} \tau^{-1}_{1, j}(x_{\ell}) = x_{\ell}$ for $\ell = 2, 3$.  Assume now that $i \neq 1$. Then, 
\begin{equation*}
\tau_{i,j}(\theta)_{k} \tau^{-1}_{i, j}(x_{1}) = \tau_{i, j}(\theta)_{k} (x_{1}) = \tau_{i, j}(x_{1}[w, x_{k}]) = x_{1}[\tau_{i, j}(w), \tau_{i, j}(x_{k})].
\end{equation*}
Hence, by analogous arguments, $\tau_{i, j}(\theta)_{k} \tau^{-1}_{i, j}(x_{1}) = x_{1}[\tau_{i, j}(w), x_{k}]$. Furthermore, by arguments similar to those in the proof of Lemma \ref{leem7} (1) (a), it is easily verified that $\tau_{i, j}(\theta)_{k} \tau^{-1}_{i, j}(x_{\ell}) = x_{\ell}$ for $\ell = 2, 3$. Thus, for all $i, j \in \{1, 2, 3\}$, with $i \neq j$,
\begin{equation*}
\tau_{i, j}(\theta)_{k} \tau^{-1}_{i, j}(x_{1}) = x_{1}[\tau_{i, j}(w), x_{k}], \tau_{i, j}(\theta)_{k} \tau^{-1}_{i, j}(x_{\ell}) = x_{\ell} \text{ for } \ell = 2, 3.
\end{equation*}
Since, by Lemma \ref{leem7} (1) (a) and the above definition, 
\begin{equation*}
(\tau_{i, j}\theta \tau^{-1}_{i, j})_{k}(x_{1}) = x_{1}[\tau_{i, j}(w), x_{k}], 
(\tau_{i, j}\theta \tau^{-1}_{i, j})_{k}(x_{\ell}) = x_{\ell} \text{ for } \ell = 2, 3,
\end{equation*}
it follows that $\tau_{i, j}(\theta)_{k} \tau^{-1}_{i, j} = (\tau_{i, j}\theta \tau^{-1}_{i, j})_{k}$.
\end{enumerate}

\item 

\begin{enumerate}

\item By Lemma \ref{extrlem1} (1), $fg(x_{1}) = x_{1}w_{1}w_{2}$ and $fg(x_{\ell}) = x_{\ell}$ for $\ell = 2, 3$. Let $k \in \{1, 2, 3\}$. Then, by Lemma \ref{leem2} (6),
\begin{equation*}
\begin{split}
(fg)_{k}(x_{1}) &= x_{1}[w_{1}w_{2}, x_{k}] = x_{1}[w_{1}, x_{k}][w_{2}, x_{k}] = (f)_{k}(g)_{k}(x_{1})\\
(fg)_{k}(x_{\ell}) &= x_{\ell} = (f)_{k}(g)_{k}(x_{\ell}) \text{ for }\ell = 2, 3.
\end{split}
\end{equation*}
\item Since $\phi_{3}(f) = \phi_{3}(g)$, $g^{-1}f \in {\rm Ker}(\phi_{3}) = H_{3}$ and hence, $f = gh$ for some $h \in H_{3}$. Therefore, $f(x_{1}) \equiv g(x_{1}) \pmod {[G_{3}^{\prime\prime}, G_{3}]}$, that is, $w_{1} \equiv w_{2} \pmod {[G_{3}^{\prime\prime}, G_{3}]}$, and $f(x_{\ell}) = x_{\ell} = g(x_{\ell})$ for $\ell = 2, 3$. Hence, for any $k \in \{1, 2, 3\}$, $(f)_{k}(x_{1}) = x_{1}[w_{1}, x_{k}] = x_{1}[w_{2}, x_{k}] = (g)_{k}(x_{1})$ and $(f)_{k}(x_{\ell}) = x_{\ell} = (g)_{k}(x_{\ell})$ for $\ell = 2, 3$. Thus, $(f)_{k} = (g)_{k}$ for all $k \in \{1, 2, 3\}$.
\end{enumerate}
\end{enumerate}
\end{proof}

Lemma \ref{leem7} allows us to transfer results proved in \cite{kof1} for automorphisms of $C_{3}$ to corresponding results for automorphisms of $G_{3}$.

\begin{lemma}\label{leem8}
Let $m_{1}, m_{2}, m_{3} \geq 0$. Let $\tau_{1}$ be the tame automorphism of $G_{3}$ defined by $\tau_{1}(x_{1}) = x_{2}x_{1}$, $\tau_{1}(x_{i}) = x_{i}$ for $i = 2, 3$. For $i, j \in \{1, 2, 3\}$, let $\tau_{i,j}$ be the tame automorphism of $G_{3}$ defined by $\tau_{i, j}(x_{i}) = x_{i}[x_{i}, x_{j}]$, $\tau_{i, j}(x_{\ell}) = x_{\ell}$ for $\ell \neq i$. Then, for all $i \in \{1, 2, 3\}$, the following hold:
\begin{enumerate}

\item $\alpha^{-1}_{i, (m_{1}+1, 0, 0)}\tau_{3,2}\alpha_{i, (m_{1}+1, 0, 0)}\tau^{-1}_{3,2} = \beta^{-1}_{i, (m_{1}+1, 0, 0)}\beta_{i, (m_{1}, 0, 0)}$.

\item  $\tau_{1, 2}\beta_{i, (m_{1}, m_{2}, 0)}\tau^{-1}_{1, 2} = \beta_{i, (m_{1}, m_{2}+1, 0)}$. 

\item $\tau_{1, 2}\beta_{i, (m_{1}, m_{2}, m_{3})}\tau^{-1}_{1, 2} = \beta_{i, (m_{1}, m_{2}+1, m_{3})}$.

\item $\beta^{-1}_{i, (m_{1}, m_{2}+1, 0)}\tau_{2, 1}\beta_{i, (m_{1}, m_{2}+1, 0)}\tau^{-1}_{2, 1} = \tau_{2,1}\alpha_{i, (m_{1}, m_{2}+1, 0)}\alpha^{-1}_{i, (m_{1}, m_{2}, 0)}\tau^{-1}_{2,1}$.

\item $\beta^{-1}_{i, (m_{1}, m_{2}+1, m_{3})}\tau_{2, 1}\beta_{i, (m_{1}, m_{2}+1, m_{3})}\tau^{-1}_{2, 1} = \tau_{2,1}\alpha_{i, (m_{1}, m_{2}+1, m_{3})}\alpha^{-1}_{i, (m_{1}, m_{2}, m_{3})}\tau^{-1}_{2,1}$.

\item $\tau_{3, 1}\alpha_{i, (m_{1}+1, 0, 0)}\tau^{-1}_{3, 1} = \alpha_{i, (m_{1}, 0, 0)}$.

\item 
\begin{enumerate}
\item $\beta^{-1}_{1, (m_{1}, m_{3}, 0)}\tau_{1, 3}\beta_{1, (m_{1}, m_{3}, 0)} \tau^{-1}_{1, 3} = \sigma_{2, 3}\beta^{-1}_{1, (m_{1}, 0, m_{3}+1)}\beta_{1, (m_{1}, 0, m_{3})}\sigma^{-1}_{2, 3}$.

\item $\beta^{-1}_{3, (m_{1}, m_{3}, 0)}\tau_{1, 3}\beta_{3, (m_{1}, m_{3}, 0)} \tau^{-1}_{1, 3} = \sigma_{2, 3}\beta^{-1}_{2, (m_{1}, 0, m_{3}+1)}\beta_{2, (m_{1}, 0, m_{3})}\sigma^{-1}_{2, 3}$.

\item $\beta^{-1}_{2, (m_{1}, m_{3}, 0)}\tau_{1, 3}\beta_{2, (m_{1}, m_{3}, 0)} \tau^{-1}_{1, 3} = \sigma_{2, 3}\beta^{-1}_{3, (m_{1}, 0, m_{3}+1)}\beta_{3, (m_{1}, 0, m_{3})}\sigma^{-1}_{2, 3}$.
\end{enumerate}
\end{enumerate}
\end{lemma}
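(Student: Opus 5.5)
Every identity in the statement is an equality between two elements of $H_3$ (or of the normal closure computations), and each side fixes $x_2,x_3$ and sends $x_1$ to $x_1$ times an element of $[G_3^{\prime\prime},G_3]$. My plan is therefore to compare the images of $x_1$ only. That already decides equality, because elements of $H_3$ commute and act trivially on $G_3^{\prime\prime}$ (Lemma \ref{extrlem1}), so products can be read off as products of the added factors.

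The key device is Lemma \ref{leem7}. Write $w_\alpha=[A^{g},B]$ with $g=x_1^{m_1}x_2^{m_2}x_3^{m_3}$, and $w_\beta=[A^g,C]$. Then $\alpha_{i,(m_1,m_2,m_3)}=(\widetilde\alpha)_i$ and $\beta_{i,(m_1,m_2,m_3)}=(\widetilde\beta)_i$, where $\widetilde\alpha,\widetilde\beta$ send $x_1\mapsto x_1w_\alpha$ and $x_1\mapsto x_1 w_\beta$ respectively. Modulo $[G_3'',G_3]$ these $\widetilde\alpha,\widetilde\beta$ are exactly lifts of the $C_3$-automorphisms studied in \cite{kof1}.

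Each of parts (1)–(7) then reduces to an identity in ${\rm Aut}(C_3)$, in two steps.
\begin{enumerate}
\item Conjugation by $\tau_{i,j}$ commutes with taking $(\cdot)_k$ (Lemma \ref{leem7}(1)(b)), and $(\cdot)_k$ is multiplicative (Lemma \ref{leem7}(2)(a)). Hence the left-hand side of, for example, (1) equals $\bigl(\widetilde\alpha^{-1}_{(m_1+1,0,0)}\tau_{3,2}\widetilde\alpha_{(m_1+1,0,0)}\tau_{3,2}^{-1}\bigr)_i$, and similarly for the right-hand side.
\item By Lemma \ref{leem7}(2)(b) it suffices that the two inner automorphisms agree under $\phi_3$. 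This is the corresponding computation for $\widehat\alpha,\widehat\beta$ in ${\rm Aut}(C_3)$ from \cite{kof1}, or can be checked directly.
\end{enumerate}
Concretely, the $C_3$-level identities needed are the following, all modulo $[G_3'',G_3]$:
\begin{itemize}
\item $\tau_{3,2}(w_\alpha)$ differs from $w_\alpha$ by $w_\beta$-type factors, which gives (1).
\item $\tau_{1,2}$ shifts the exponent $m_2$ by one in $w_\beta$, which gives (2) and (3). Here one uses $\tau_{1,2}(x_1)=x_1^{x_2}$, and $C$ is unchanged.
\item $\tau_{2,1}$ turns $C$ into $C^{x_1}$ times something, which produces $w_\alpha$-terms and gives (4) and (5).
\item $\tau_{3,1}$ sends $B$ to $B^{x_1^{-1}}$-ish, which lowers $m_1$ and gives (6).
\end{itemize}

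Part (7) also involves $\sigma_{2,3}$, which is not a $\tau_{i,j}$, so Lemma \ref{leem7}(1)(b) does not apply directly. For $\sigma_{2,3}$ I would verify directly that $\sigma_{2,3}(\theta)_k\sigma_{2,3}^{-1}=(\sigma_{2,3}\theta\sigma_{2,3}^{-1})_{\sigma(k)}$. The argument is immediate, since $\sigma_{2,3}$ fixes $x_1$ and permutes $x_2,x_3$. This relabelling accounts for the swap $\beta_3\leftrightarrow\beta_2$ in (7)(b),(c).

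The main obstacle is the commutator bookkeeping in $C_3$. One must show, for instance, that $[A^{g},C]$ under $\tau_{2,1}$ yields exactly $[A^{g'},C]$ times $[A^{x_1^{m_1}x_2^{m_2+1}},B][A^{x_1^{m_1}x_2^{m_2}},B]^{-1}$ conjugated by $\tau_{2,1}$. The difficulty is tracking the conjugating exponents via Lemma \ref{leem2}(5),(9) at the level of $G_3''$ modulo $[G_3'',G_3]$. I would first try to import these verbatim from the corresponding lemmas of \cite{kof1}, and only if necessary redo them using $[ab,c]=[a,c]^b[b,c]$ together with Lemma \ref{leem2}.
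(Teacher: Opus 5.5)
Your proposal is correct and is essentially the paper's proof. You write $\alpha_{i,(m_1,m_2,m_3)}$ and $\beta_{i,(m_1,m_2,m_3)}$ as $(\cdot)_i$ of the automorphisms $x_1\mapsto x_1[A^g,B]$ and $x_1\mapsto x_1[A^g,C]$, move $(\cdot)_i$ through products and $\tau_{i,j}$-conjugation by Lemma \ref{leem7}, import the $\phi_3$-level identities from \cite{kof1}, and handle part (7) by the relabelling $k\mapsto(23)(k)$ under $\sigma_{2,3}$; the paper does this relabelling via $\psi_1,\psi_2$, while you use the general identity $\sigma_{2,3}(\theta)_k\sigma_{2,3}^{-1}=(\sigma_{2,3}\theta\sigma_{2,3}^{-1})_{(23)(k)}$, which is valid. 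One small slip in your heuristic list: $\tau_{3,1}(B)=[x_1,x_3^{x_1}]=B^{x_1}$ exactly (not $B^{x_1^{-1}}$), and it is Lemma \ref{leem2}(9), $[A^{g},B^{x_1},x_i]=[A^{gx_1^{-1}},B,x_i]$, that lowers $m_1$.
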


\begin{proof}
For $m_{1}, m_{2}, m_{3} \in \mathbb{Z}$ and $g =x_{1}^{m_{1}}x_{2}^{m_{2}}x_{3}^{m_{3}}$, we write $\alpha_{(m_{1}, m_{2}, m_{3})}$ and $\beta_{(m_{1}, m_{2}, m_{3})}$ for the endomorphisms of $G_{3}$ defined by
\begin{equation*}
\begin{split}
\alpha_{(m_{1}, m_{2}, m_{3})}(x_{1}) &= x_{1}[[x_{1}, x_{2}]^{g}, [x_{1}, x_{3}]], \alpha_{(m_{1}, m_{2}, m_{3})}(x_{i}) = x_{i}, i = 2, 3,\\
\beta_{(m_{1}, m_{2}, m_{3})}(x_{1}) &= x_{1}[[x_{1}, x_{2}]^{g}, [x_{2}, x_{3}]], \beta_{(m_{1}, m_{2}, m_{3})}(x_{i}) = x_{i}, i = 2, 3.
\end{split}
\end{equation*}
Note that, by Lemma \ref{leem1} (1), $\alpha_{(m_{1}, m_{2}, m_{3})}$ and $\beta_{(m_{1}, m_{2}, m_{3})}$ are automorphisms of $G_{3}$. It follows from \cite[proof of Proposition 6]{kof1} that
\begin{enumerate}

\item $\phi_{3}(\alpha^{-1}_{(m_{1}+1, 0, 0)}\tau_{3,2}\alpha_{(m_{1}+1, 0, 0)}\tau^{-1}_{3,2}) = \phi_{3}(\beta^{-1}_{(m_{1}+1, 0, 0)}\beta_{(m_{1}, 0, 0)})$,

\item  $\phi_{3}(\tau_{1, 2}\beta_{(m_{1}, m_{2}, 0)}\tau^{-1}_{1, 2}) = \phi_{3}(\beta_{(m_{1}, m_{2}+1, 0)})$, 

\item $\phi_{3}(\tau_{1, 2}\beta_{(m_{1}, m_{2}, m_{3})}\tau^{-1}_{1, 2}) = \phi_{3}(\beta_{(m_{1}, m_{2}+1, m_{3})})$,

\item $\phi_{3}(\beta^{-1}_{(m_{1}, m_{2}+1, 0)}\tau_{2, 1}\beta_{(m_{1}, m_{2}+1, 0)}\tau^{-1}_{2, 1}) = \phi_{3}(\tau_{2,1}\alpha_{(m_{1}, m_{2}+1, 0)}\alpha^{-1}_{(m_{1}, m_{2}, 0)}\tau^{-1}_{2,1})$,

\item $\phi_{3}(\beta^{-1}_{(m_{1}, m_{2}+1, m_{3})}\tau_{2, 1}\beta_{(m_{1}, m_{2}+1, m_{3})}\tau^{-1}_{2, 1}) = \phi_{3}(\tau_{2,1}\alpha_{(m_{1}, m_{2}+1, m_{3})}\alpha^{-1}_{(m_{1}, m_{2}, m_{3})}\tau^{-1}_{2,1})$,

\item $\phi_{3}(\tau_{3, 1}\alpha_{(m_{1}+1, 0, 0)}\tau^{-1}_{3, 1}) = \phi_{3}(\alpha_{(m_{1}, 0, 0)})$.
\end{enumerate}

Let $i \in \{1, 2, 3\}$. By equation (1) above and by Lemma \ref{leem7} (2) (b), we get \begin{equation*}
(\alpha^{-1}_{(m_{1}+1, 0, 0)}\tau_{3,2}\alpha_{(m_{1}+1, 0, 0)}\tau^{-1}_{3,2})_{i} = (\beta^{-1}_{(m_{1}+1, 0, 0)}\beta_{(m_{1}, 0, 0)})_{i}. 
\end{equation*}
Furthermore, by Lemma \ref{leem7} (2) (a), 
\begin{equation*}
\begin{split}
(\alpha^{-1}_{(m_{1}+1, 0, 0)}\tau_{3,2}\alpha_{(m_{1}+1, 0, 0)}\tau^{-1}_{3,2})_{i} &= (\alpha^{-1}_{(m_{1}+1, 0, 0)})_{i}(\tau_{3,2}\alpha_{(m_{1}+1, 0, 0)}\tau^{-1}_{3,2})_{i},\\
(\beta^{-1}_{(m_{1}+1, 0, 0)}\beta_{(m_{1}, 0, 0)})_{i} &= (\beta^{-1}_{(m_{1}+1, 0, 0)})_{i}(\beta_{(m_{1}, 0, 0)})_{i},
\end{split}
\end{equation*}
and by Lemma \ref{leem7} (1) (b), 
\begin{equation*}
(\tau_{3,2}\alpha_{(m_{1}+1, 0, 0)}\tau^{-1}_{3,2})_{i} = \tau_{3,2}(\alpha_{(m_{1}+1, 0, 0)})_{i}\tau^{-1}_{3,2}.
\end{equation*}
Thus, by the above equations,
\begin{equation*}
(\alpha^{-1}_{(m_{1}+1, 0, 0)})_{i}\tau_{3,2}(\alpha_{(m_{1}+1, 0, 0)})_{i}\tau^{-1}_{3,2} = (\beta^{-1}_{(m_{1}+1, 0, 0)})_{i}(\beta_{(m_{1}, 0, 0)})_{i}.
\end{equation*}
But by our notation, $(\alpha_{(m_{1}+1, 0, 0)})_{i} = \alpha_{i, (m_{1}+1, 0, 0)}$, $(\beta_{(m_{1}, 0, 0)})_{i} = \beta_{i, (m_{1}, 0, 0)}$ and $(\beta_{(m_{1}+1, 0, 0)})_{i} = \beta_{i, (m_{1}+1, 0, 0)}$ and thus, we obtain Lemma \ref{leem8} (1). 

By the above equations (2)-(6) and by analogous arguments, we get Lemma \ref{leem8} (1)-(6). 

Furthermore, by \cite[proof of Proposition 6]{kof1},
\begin{equation*}
\phi_{3}(\beta^{-1}_{(m_{1}, m_{3}, 0)}\tau_{1, 3}\beta_{(m_{1}, m_{3}, 0)} \tau^{-1}_{1, 3}) = \phi_{3}(\sigma_{2, 3}\beta^{-1}_{(m_{1}, 0, m_{3}+1)}\beta_{(m_{1}, 0, m_{3})}\sigma^{-1}_{2, 3}).
\end{equation*}
Let $\psi_{1} = \sigma_{2, 3}\beta^{-1}_{(m_{1}, 0, m_{3}+1)}\sigma^{-1}_{2, 3}$ and $\psi_{2} = \sigma_{2, 3}\beta_{(m_{1}, 0, m_{3})}\sigma^{-1}_{2, 3}$. It is easily verified that 
\begin{equation*}
\psi_{1}(x_{1}) = x_{1}[B^{x_{1}^{m_{1}}x_{2}^{m_{3}+1}}, C], \psi_{1}(x_{j}) = x_{j}, j = 2,3,
\end{equation*}
and 
\begin{equation*}
\psi_{2}(x_{1}) = x_{1}[B^{x_{1}^{m_{1}}x_{2}^{m_{3}}}, C]^{-1}, \psi_{2}(x_{j}) = x_{j}, j = 2,3.
\end{equation*}
Hence, $\phi_{3}(\beta^{-1}_{(m_{1}, m_{3}, 0)}\tau_{1, 3}\beta_{(m_{1}, m_{3}, 0)} \tau^{-1}_{1, 3}) = \phi_{3}(\psi_{1}\psi_{2})$. 
Let $k \in \{1, 2, 3\}$. By arguments similar to the above, we get $\beta^{-1}_{k, (m_{1}, m_{3}, 0)}\tau_{1, 3}\beta_{k, (m_{1}, m_{3}, 0)} \tau^{-1}_{1, 3} = (\psi_{1})_{k}(\psi_{2})_{k}$. But it is easily verified that
\begin{equation*}
\begin{split}
(\psi_{1})_{k} &= \sigma_{2, 3}\beta^{-1}_{(23)(k), (m_{1}, 0, m_{3}+1)}\sigma^{-1}_{2, 3}, \\
(\psi_{2})_{k} &= \sigma_{2, 3}\beta_{(23)(k), (m_{1}, 0, m_{3})}\sigma^{-1}_{2, 3},\end{split}
\end{equation*}
where $(23)(k)$ is the image of $k$ via the permutation $(23)$ of the symmetric group $S_{3}$. Hence, we may deduce Lemma \ref{leem8} (7).
\end{proof}

\section{Further analysis of $H_{3}$}\label{sec5}

\subsection{A Reduction of the Generating Set of $H_{3}$}

The relations in Lemma \ref{leem8} allow us to successively eliminate two of the parameters in $\alpha_{i, (m_{1}, m_{2}, m_{3})}$ ($i = 1, 2, 3$) and reduce these automorphisms to a one-parameter form.

\begin{lemma}\label{leem9}
Let $B_{3} = \langle \alpha_{1, (m, 0, 0)}, \alpha_{2, (m, 0, 0)}, \alpha_{3, (m, 0, 0)} : m \geq 0\rangle^{T_{3}}$. Then:
\begin{enumerate}
\item $\beta_{i, (m_{1}, m_{2}, m_{3})} \in B_{3}$ for all $i \in \{1, 2, 3\}$ and $m_{1}, m_{2}, m_{3} \geq 0$. 

\item $\alpha_{i, (m_{1}, m_{2}, m_{3})} \in B_{3}$ for all $i \in \{1, 2, 3\}$ and $m_{1}, m_{2}, m_{3} \geq 0$.

\item $H_{3} = B_{3}$.
\end{enumerate}
\end{lemma}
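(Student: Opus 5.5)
The plan is to run a double induction on the parameters, using only the relations of Lemma \ref{leem8}, with $B_{3}$ as the target group. Since $B_{3}$ is by definition the normal closure under $T_{3}$, it is closed under conjugation by tame automorphisms. It therefore contains every $\tau_{i,j}$-conjugate and every $\sigma_{2,3}$-conjugate of its elements. We do not even need the abelianness of $H_{3}$; we only need closure under products and tame conjugation.

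\emph{Step 1: $\beta_{i,(m_{1},0,0)}\in B_{3}$.} Lemma \ref{leem8} (1) gives
\[
\beta_{i,(m_{1}+1,0,0)}^{-1}\beta_{i,(m_{1},0,0)}=\alpha_{i,(m_{1}+1,0,0)}^{-1}\,\tau_{3,2}\alpha_{i,(m_{1}+1,0,0)}\tau_{3,2}^{-1}\in B_{3}.
\]
So it suffices to know $\beta_{i,(0,0,0)}\in B_{3}$, and then to induct on $m_{1}$. The base case comes from Lemma \ref{leem6} (1). Parts (a) and (b) express $k_{1}^{-1}\beta_{2,(0,0,0)}k_{1}$ and $k_{1}^{-1}\beta_{3,(0,0,0)}k_{1}$ as products of $\alpha_{i,(0,0,0)}$'s and their tame conjugates. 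Part (c) then gives $\beta_{1,(0,0,0)}^{-1}\beta_{2,(0,0,0)}$, hence $\beta_{1,(0,0,0)}$. Conjugating back by $k_{1}$ gives all three in $B_{3}$.

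\emph{Step 2: $\beta_{i,(m_{1},m_{2},0)}$ and $\alpha_{i,(m_{1},m_{2},0)}$.} Lemma \ref{leem8} (2) gives $\beta_{i,(m_{1},m_{2}+1,0)}=\tau_{1,2}\beta_{i,(m_{1},m_{2},0)}\tau_{1,2}^{-1}$. Induction on $m_{2}$ from Step 1 then puts every $\beta_{i,(m_{1},m_{2},0)}$ in $B_{3}$. Next, Lemma \ref{leem8} (4) shows that $\tau_{2,1}\alpha_{i,(m_{1},m_{2}+1,0)}\alpha_{i,(m_{1},m_{2},0)}^{-1}\tau_{2,1}^{-1}$ is a product of $\beta$'s and their tame conjugates, so it lies in $B_{3}$. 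Conjugating by $\tau_{2,1}^{-1}$ shows that $\alpha_{i,(m_{1},m_{2}+1,0)}\in B_{3}$ whenever $\alpha_{i,(m_{1},m_{2},0)}\in B_{3}$. The base case $m_{2}=0$ is a generator, so induction on $m_{2}$ gives all $\alpha_{i,(m_{1},m_{2},0)}$.

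\emph{Step 3: the third parameter.} This is the main obstacle, because Lemma \ref{leem8} (3) and (5) only shift $m_{2}$, whereas the induction now has to raise $m_{3}$. My plan is to use Lemma \ref{leem8} (7) (a)--(c). Its left-hand sides involve only $\beta$'s with third index $0$, which lie in $B_{3}$ by Step 2, together with tame conjugates of them. So $\beta_{\ell,(m_{1},0,m_{3}+1)}^{-1}\beta_{\ell,(m_{1},0,m_{3})}\in B_{3}$ for each $\ell$, after conjugating by $\sigma_{2,3}$ and permuting the index $\ell$ via $(23)$. With base $m_{3}=0$ from Step 1, induction on $m_{3}$ gives $\beta_{i,(m_{1},0,m_{3})}\in B_{3}$. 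Lemma \ref{leem8} (3) and induction on $m_{2}$ then give all $\beta_{i,(m_{1},m_{2},m_{3})}$, which proves (1).

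For (2), fix $m_{3}$ and proceed exactly as in Step 2, but with Lemma \ref{leem8} (5). It reduces $\alpha_{i,(m_{1},m_{2},m_{3})}$ to $\alpha_{i,(m_{1},0,m_{3})}$. To handle $\alpha_{i,(m_{1},0,m_{3})}$, I would apply Lemma \ref{leem6} (2). It identifies $\sigma_{2,3}$-conjugates of $\alpha_{j,(m_{1},m_{3},-m_{2})}$ with tame conjugates of $\alpha$'s having third parameter $m_{3}+1$. Taking $m_{2}=0$, it converts $\alpha_{\cdot,(m_{1},0,m_{3}+1)}$ into $\alpha_{\cdot,(m_{1},m_{3},0)}$ up to tame conjugation and inversion. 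The latter lies in $B_{3}$ by Step 2. Lemma \ref{leem6} (3) covers the permutation of the indices $2$ and $3$. Some care with signs and index permutations is needed here, and that is where I expect the fiddly bookkeeping.

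Finally, (3) follows because (2) together with Lemma \ref{leem5} gives $H_{3}\subseteq B_{3}$, while $B_{3}\subseteq H_{3}$ holds trivially since $H_{3}$ is normal in ${\rm Aut}(G_{3})$.
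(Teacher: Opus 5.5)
Your proposal is correct and follows essentially the same route as the paper: $\beta_{i,(0,0,0)}$ comes from Lemma \ref{leem6} (1), then Lemma \ref{leem8} (1), (2), (7), (3) are used in turn for the $\beta$'s, and Lemma \ref{leem8} (4), Lemma \ref{leem6} (2) at $m_{2}=0$, and Lemma \ref{leem8} (5) for the $\alpha$'s, with (3) from Lemma \ref{leem5}. One small correction: Lemma \ref{leem6} (3) is not needed in part (2), because Lemma \ref{leem6} (2) (a)--(c) already treat all three indices; Lemma \ref{leem6} (3) only concerns the parameter $(0,0,0)$ and is used later, in Proposition \ref{propo5}.
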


\begin{proof}
\begin{enumerate}
\item We split the proof into several steps. Let $i \in \{1, 2, 3\}$.

\begin{enumerate}
\item By definition, $\alpha_{i, (m_{1}, 0, 0)} \in B_{3}$ for all $m_{1} \geq 0$. Thus, using Lemma \ref{leem8} (1), we obtain $\beta^{-1}_{i, (m_{1}+1, 0, 0)}\beta_{i, (m_{1}, 0, 0)} \in B_{3}$ for all $m_{1} \geq 0$. Furthermore, by Lemma \ref{leem6} (1) (a)-(c), we deduce that $\beta_{i, (0, 0, 0)} \in B_{3}$. Therefore, by induction on $m_{1}$, we get $\beta_{i, (m_{1}, 0, 0)} \in B_{3}$ for all $m_{1} \geq 0$. 

\item By the above step, $\beta_{i, (m_{1}, 0, 0)} \in B_{3}$ for all $m_{1} \geq 0$, and by Lemma \ref{leem8} (2), $\tau_{1, 2}\beta_{i, (m_{1}, m_{2}, 0)}\tau^{-1}_{1, 2} = \beta_{i, (m_{1}, m_{2}+1, 0)}$ for all $m_{1}, m_{2} \geq 0$. Thus, by induction on $m_{2}$ we get $\beta_{i, (m_{1}, m_{2}, 0)} \in B_{3}$ for all $m_{1}, m_{2} \geq 0$. 

\item Since, by the above step, $\beta_{i, (m_{1}, m_{2}, 0)} \in B_{3}$ for all $m_{1}, m_{2} \geq 0$, it follows from Lemma \ref{leem8} (7) that $\beta^{-1}_{i, (m_{1}, 0, m_{3}+1)}\beta_{i, (m_{1}, 0, m_{3})} \in B_{3}$ for all $m_{1}, m_{3} \geq 0$. Hence, by induction on $m_{3}$, we get $\beta_{i, (m_{1}, 0, m_{3})} \in B_{3}$ for all $m_{1}, m_{3} \geq 0$. 

\item By the above step, $\beta_{i, (m_{1}, 0 , m_{3})} \in B_{3}$ for all $m_{1}, m_{3} \geq 0$, and by Lemma \ref{leem8} (3), $\tau_{1, 2}\beta_{i, (m_{1}, m_{2}, m_{3})}\tau^{-1}_{1, 2} = \beta_{i, (m_{1}, m_{2}+1, m_{3})}$ for all $m_{1}, m_{2}, m_{3} \geq 0$. Therefore, by induction on $m_{2}$, $\beta_{i, (m_{1}, m_{2} , m_{3})} \in B_{3}$ for all $m_{1}, m_{2}, m_{3} \geq 0$.
\end{enumerate}

\item We argue similarly to the above. Let $i \in \{1, 2, 3\}$.

\begin{enumerate}
\item By part (1), $\beta_{i, (m_{1}, m_{2}, 0)} \in B_{3}$ for all $m_{1}, m_{2} \geq 0$. Thus, using Lemma \ref{leem8} (4), we obtain $\alpha_{i, (m_{1}, m_{2}+1, 0)}\alpha^{-1}_{i, (m_{1}, m_{2}, 0)} \in B_{3}$ for all $m_{1}, m_{2} \geq 0$. Since, by definition, $\alpha_{i, (m_{1}, 0, 0)} \in B_{3}$ for all $m_{1} \geq 0$, by induction on $m_{2}$ we get $\alpha_{i, (m_{1}, m_{2}, 0)} \in B_{3}$ for all $m_{1}, m_{2} \geq 0$. 

\item By the above step, $\alpha_{i, (m_{1}, m_{2}, 0)} \in B_{3}$ for all $m_{1}, m_{2} \geq 0$. Therefore, it follows from Lemma \ref{leem6} (2), applied for $m_{2} = 0$, that $\alpha_{i, (m_{1}, 0, m_{3}+1)} \in B_{3}$ for all $m_{1}, m_{3} \geq 0$. Since, by definition, $\alpha_{i, (m_{1}, 0, 0)} \in B_{3}$ for all $m_{1} \geq 0$, we get $\alpha_{i, (m_{1}, 0, m_{3})} \in B_{3}$ for all $m_{1}, m_{3} \geq 0$. 

\item By part (1), $\beta_{i, (m_{1}, m_{2}, m_{3})} \in B_{3}$, and by Lemma \ref{leem8} (5), 
\begin{equation*}
\beta^{-1}_{i, (m_{1}, m_{2}+1, m_{3})}\tau_{2, 1}\beta_{i, (m_{1}, m_{2}+1, m_{3})}\tau^{-1}_{2, 1} = \tau_{2,1}\alpha_{i, (m_{1}, m_{2}+1, m_{3})}\alpha^{-1}_{i, (m_{1}, m_{2}, m_{3})}\tau^{-1}_{2,1}
\end{equation*}
for all $m_{1}, m_{2}, m_{3} \geq 0$. Thus, $\alpha_{i, (m_{1}, m_{2}+1, m_{3})}\alpha^{-1}_{i, (m_{1}, m_{2}, m_{3})} \in B_{3}$ for all $m_{1}, m_{2}, m_{3} \geq 0$. Since, by the above step, $\alpha_{i, (m_{1}, 0, m_{3})} \in B_{3}$ for all $m_{1}, m_{3} \geq 0$, by induction on $m_{2}$ we get $\alpha_{i, (m_{1}, m_{2}, m_{3})} \in B_{3}$ for all $m_{1}, m_{2}, m_{3} \geq 0$.
\end{enumerate}

\item By Lemma \ref{leem5}, $H_{3}= \langle \alpha_{1, (m_{1}, m_{2}, m_{3})}, \alpha_{2, (m_{1}, m_{2}, m_{3})}, \alpha_{3, (m_{1}, m_{2}, m_{3})}: m_{1}, m_{2}, m_{3} \geq 0\rangle^{T_{3}}$ and by part (2), $\alpha_{i, (m_{1}, m_{2}, m_{3})} \in B_{3}$ for all $i \in \{1, 2, 3\}$ and $m_{1}, m_{2}, m_{3} \geq 0$. Hence, it follows that $H_{3} = B_{3}$.
\end{enumerate}
\end{proof}

\begin{proposition}\label{propo5}
$H_{3} = \langle \alpha_{1, (0, 0, 0)}, \alpha_{2, (0, 0, 0)}\rangle^{T_{3}}$, 
where $\alpha_{1, (0, 0, 0)}$ and $\alpha_{2, (0, 0, 0)}$ are the automorphisms of $G_{3}$ defined by
\begin{align*}
\alpha_{1, (0, 0, 0)}(x_{1}) &= x_{1}[[x_{1}, x_{2}], [x_{1}, x_{3}], x_{1}], \alpha_{1, (0, 0, 0)}(x_{i})= x_{i}, i = 2, 3,\\
\alpha_{2, (0, 0, 0)}(x_{1}) &= x_{1}[[x_{1}, x_{2}], [x_{1}, x_{3}], x_{2}], \alpha_{2, (0, 0, 0)}(x_{i})= x_{i}, i = 2, 3.
\end{align*}
\end{proposition}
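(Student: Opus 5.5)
By Lemma \ref{leem9} (3), $H_{3} = B_{3}$, where $B_{3}$ is the normal closure under $T_{3}$ of the automorphisms $\alpha_{i, (m, 0, 0)}$ with $i = 1, 2, 3$ and $m \geq 0$. So I would set $B_{4} = \langle \alpha_{1, (0, 0, 0)}, \alpha_{2, (0, 0, 0)}\rangle^{T_{3}}$. Since $B_{4} \subseteq H_{3}$ is clear, it suffices to show that every $\alpha_{i, (m, 0, 0)}$ lies in $B_{4}$. The plan has two reductions: first remove the parameter $m$, then remove the index $i = 3$.

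\emph{Removing $m$.} Lemma \ref{leem8} (6) gives
\begin{equation*}
\tau_{3, 1}\alpha_{i, (m+1, 0, 0)}\tau^{-1}_{3, 1} = \alpha_{i, (m, 0, 0)}
\end{equation*}
for all $m \geq 0$ and all $i$. Hence
\begin{equation*}
\alpha_{i, (m+1, 0, 0)} = \tau^{-1}_{3, 1}\alpha_{i, (m, 0, 0)}\tau_{3, 1}.
\end{equation*}
Since $\tau_{3,1} \in T_{3}$, induction on $m$ puts every $\alpha_{i, (m, 0, 0)}$ in the $T_{3}$-normal closure of $\alpha_{i, (0, 0, 0)}$. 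So it is enough to have $\alpha_{1, (0,0,0)}, \alpha_{2, (0,0,0)}, \alpha_{3, (0,0,0)} \in B_{4}$.

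\emph{Removing $i = 3$.} Lemma \ref{leem6} (3) states $\sigma^{-1}_{2, 3}\alpha^{-1}_{2, (0, 0, 0)}\sigma_{2, 3} = \alpha_{3, (0, 0, 0)}$. So $\alpha_{3, (0,0,0)}$ is a $T_{3}$-conjugate of $\alpha_{2, (0,0,0)}^{-1}$ and lies in $B_{4}$. Combining this with the previous step gives $B_{3} \subseteq B_{4}$, and therefore $H_{3} = B_{3} = B_{4}$.

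I expect no serious obstacle here, because the heavy computations are already contained in Lemmas \ref{leem6}, \ref{leem8} and \ref{leem9}. The one point to check is the direction of conjugation in Lemma \ref{leem8} (6): it expresses $\alpha_{i,(m,0,0)}$ in terms of $\alpha_{i,(m+1,0,0)}$, so I must invert the relation as above. This is harmless, since $B_{4}$ is closed under conjugation by $\tau_{3,1}^{\pm 1}$.
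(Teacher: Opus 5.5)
Your proposal is correct and follows the paper's proof essentially verbatim: reduce to the $\alpha_{i,(m,0,0)}$ via Lemma \ref{leem9} (3), then remove $m$ by induction using Lemma \ref{leem8} (6), then remove $\alpha_{3,(0,0,0)}$ as a $T_{3}$-conjugate of $\alpha_{2,(0,0,0)}^{-1}$ via Lemma \ref{leem6} (3). Your explicit inversion of the conjugation by $\tau_{3,1}$ is exactly what the paper means by ``by induction on $m$''.
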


\begin{proof}
By Lemma \ref{leem9} (3), $H_{3} =\langle \alpha_{1, (m, 0, 0)}, \alpha_{2, (m, 0, 0)}, \alpha_{3, (m, 0, 0)} : m \geq 0\rangle^{T_{3}}$ and hence it suffices to show that $\alpha_{i, (m, 0, 0)} \in  \langle \alpha_{1, (0, 0, 0)}, \alpha_{2, (0, 0, 0)}\rangle^{T_{3}}$ for all $i \in \{1, 2, 3\}$ and $m \geq 0$. Let $i \in \{1, 2 ,3\}$. Since, by Lemma \ref{leem8} (6), $\tau_{3, 1}\alpha_{i, (m+1, 0, 0)}\tau^{-1}_{3, 1} = \alpha_{i, (m, 0, 0)}$, by induction on $m$ we get $\alpha_{i, (m, 0, 0)} \in \langle \alpha_{1, (0, 0, 0)}, \alpha_{2, (0, 0, 0)}, \alpha_{3, (0, 0, 0)} \rangle^{T_{3}}$. Therefore, $H_{3} \subseteq \langle \alpha_{1, (0, 0, 0)}, \alpha_{2, (0, 0, 0)}, \alpha_{3, (0, 0, 0)} \rangle^{T_{3}}$. Furthermore, by Lemma \ref{leem6} (3), $\sigma^{-1}_{2, 3}\alpha^{-1}_{2, (0, 0, 0)}\sigma_{2, 3} = \alpha_{3, (0, 0, 0)}$ and thus, the result follows.
\end{proof}

\section{Proof of Theorem \ref{the}}\label{lastsec}

\begin{lemma}\label{leem10}
Let $w$ be the automorphism of $G_{3}$ defined by $w(x_{1}) = x_{1}[[x_{1},x_{2}],[x_{1},x_{3}]]$ and $w(x_{i}) = x_{i}$ for $i=2,3$. Then, 
$\langle T_{3}, w, \alpha_{2, (0, 0, 0)}\rangle = \langle T_{3}, w \rangle$.
\end{lemma}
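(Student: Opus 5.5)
We need to show that $\alpha_{2,(0,0,0)} \in \langle T_{3}, w\rangle$. The plan is to realize $\alpha_{2,(0,0,0)}$ as a commutator of $w$ with a suitable tame automorphism. Conjugating $w$ by a tame automorphism $\pi$ fixing $x_{2},x_{3}$ should produce $w$ multiplied by an element of $H_{3}$ whose $x_{1}$-component lies in $[G_{3}^{\prime\prime},G_{3}]$.

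The natural candidate is $\pi_{1,2}$ (or $\tau_{1}$), since $\alpha_{2,(0,0,0)}$ involves commutation with $x_{2}$. The paper defines $\pi_{1,2}$ by $x_{1}\mapsto x_{1}x_{2}$, and then
\[
\pi_{1,2}w\pi_{1,2}^{-1}(x_{1}) = x_{1}x_{2}[[x_{1}x_{2},x_{2}],[x_{1}x_{2},x_{3}]]x_{2}^{-1}.
\]
I would compute this as follows.

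First, write $[x_{1}x_{2},x_{2}] = A^{x_{2}}$ and $[x_{1}x_{2},x_{3}] = B^{x_{2}}C$.

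Second, expand $[A^{x_{2}},B^{x_{2}}C]$. In $G_{3}$ one has $[G_{3}^{\prime\prime},G_{3}^{\prime}]=1$, so commutators of elements of $G_{3}^{\prime}$ are bilinear; this is Lemma \ref{leem2}(3), which holds in $G_{3}$ itself and not only modulo the centre. This gives $[A^{x_{2}},B^{x_{2}}][A^{x_{2}},C]$. Also $[A^{x_{2}},B^{x_{2}}] = [A,B]^{x_{2}}$.

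Third, conjugate by $x_{2}^{-1}$, using $u^{g}=u[u,g]$. This yields
\[
\pi_{1,2}w\pi_{1,2}^{-1}(x_{1}) = x_{1}\,[A,B]\,[A^{x_{2}},C]^{x_{2}^{-1}} .
\]
The factor $[A,B]$ is exactly $w$'s contribution. The remaining factor $[A^{x_{2}},C]^{x_{2}^{-1}}$ equals $[A^{x_{2}},C]\,[A^{x_{2}},C,x_{2}^{-1}]$, and so is a $\widehat\beta$-type term plus a central correction. Consequently $\pi_{1,2}w\pi_{1,2}^{-1}w^{-1}$ is a $\beta$-type automorphism and not $\alpha_{2,(0,0,0)}$ directly.

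To obtain $\alpha_{2,(0,0,0)}$, whose image is the central element $[[A,B],x_{2}]$, I would instead conjugate $w$ by the inner-type tame automorphism $\tau_{1,2}\colon x_{1}\mapsto x_{1}[x_{1},x_{2}]=x_{1}^{x_{2}}$. We have $\tau_{1,2}(A)=A^{x_{2}}$ modulo nothing harmful, and $\tau_{1,2}(B)=[x_{1}^{x_{2}},x_{3}]=B^{x_{2}}\cdot(\text{correction in }\gamma_{3})$. Computing exactly as in the proof of Lemma \ref{leem7}(1)(a),(b), and keeping the central terms this time, should give
\[
\tau_{1,2}w\tau_{1,2}^{-1}(x_{1}) = x_{1}\,\tau_{1,2}([A,B])^{x_{2}^{-1}} .
\]
Here $\tau_{1,2}([A,B]) = [A^{x_{2}},[x_{1}^{x_{2}},x_{3}]]$, and $[x_{1}^{x_{2}},x_{3}] = B^{x_{2}}[x_{2},x_{3}]^{\ldots}$ up to elements of $G_{3}^{\prime}$ which again bilinearly split off. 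The expected outcome is
\[
\tau_{1,2}w\tau_{1,2}^{-1}w^{-1} = \alpha_{2,(0,0,0)}^{\pm1}\cdot(\text{product of }\beta\text{-type and }f\text{-type terms}).
\]
Those extra terms would then have to be removed by further conjugates of $w$, for instance by $\tau_{3,2}$ or $\pi_{3,2}$, which affect only $B$ and $C$.

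The main obstacle is the precise bookkeeping of the non-central terms $[A^{g},C]$ and $[A^{g},A]$ arising from the expansion of $\tau_{1,2}(B)$. I would first try the combination
\[
w^{-1}\,\tau_{1,2}w\tau_{1,2}^{-1}\;\cdot\;\bigl(\pi\,w\,\pi^{-1}w^{-1}\bigr)^{-1}
\]
for a tame $\pi$ chosen to reproduce exactly the same non-central $\beta$-part but with $x_{2}$-conjugation absent. Alternatively, one can use that $\phi_{3}(\tau_{1,2}w\tau_{1,2}^{-1}w^{-1})$ equals $\phi_{3}$ of an explicit tame-conjugate word in $w$ by \cite{kof1}. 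Then, by Lemma \ref{leem1} and since $H_{3}$ is abelian and acts trivially on $G_{3}^{\prime\prime}$, the discrepancy lies in $H_{3}$ and is computable as $(\cdot)_{2}$ of the $C_{3}$-level term, via Lemma \ref{leem7}(2). That discrepancy should be exactly $\alpha_{2,(0,0,0)}^{\pm1}$ times elements already shown to be tame-conjugates of $w$-commutators, which gives $\alpha_{2,(0,0,0)}\in\langle T_{3},w\rangle$. The reverse inclusion is trivial.
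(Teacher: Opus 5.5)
\textbf{There is a genuine gap.} Your proposal never produces an identity. No word in $T_{3}$ and $w$ is exhibited and checked to equal $\alpha_{2,(0,0,0)}$, and each decisive step is stated only as an expectation (``should give'', ``expected outcome'', ``should be exactly'').

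The route you settle on also does not behave as you hope. Your formula $\tau_{1,2}w\tau_{1,2}^{-1}(x_{1})=x_{1}\tau_{1,2}([A,B])^{x_{2}^{-1}}$ is correct, but the outer conjugation by $x_{2}^{-1}$ just undoes the conjugation in $\tau_{1,2}(A)=A^{x_{2}}$. Concretely, let $\iota\colon y\mapsto x_{2}^{-1}yx_{2}$, and let $\kappa$ fix $x_{1},x_{2}$ with $\kappa(x_{3})=x_{2}x_{3}x_{2}^{-1}$. Then $\tau_{1,2}=\iota\kappa$, and $\iota$ commutes with every automorphism fixing $x_{2}$. Hence $\tau_{1,2}w\tau_{1,2}^{-1}=\kappa w\kappa^{-1}$, and one finds exactly that $\tau_{1,2}w\tau_{1,2}^{-1}w^{-1}$ fixes $x_{2},x_{3}$ and sends $x_{1}\mapsto x_{1}[A,[x_{1},[x_{3},x_{2}^{-1}]]]$. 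No central factor $[A,B,x_{2}]$ is isolated. What you get is a non-central, $\widehat\beta$-type element of $G_{3}^{\prime\prime}$, built from factors $[A^{g},C]^{\pm1}$. Cancelling it by further conjugates of $w$, while tracking the central terms, is exactly the work you leave undone.

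The fallback via Lemma \ref{leem7}(2) does not fill the gap. Part (2)(b) only says $(f)_{k}=(g)_{k}$ when $\phi_{3}(f)=\phi_{3}(g)$. It does not compute $g^{-1}f\in H_{3}$, and it does not place $(f)_{k}$ in $\langle T_{3},f\rangle$. Note that $(w)_{2}=\alpha_{2,(0,0,0)}$, so that second property would be the lemma itself. You also cite no concrete relation from the $C_{3}$ literature.

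\textbf{The missing idea.} Conjugate $w$ by a tame $\psi$ that \emph{fixes} $x_{1}$ yet still sends $[A,B]$ to a conjugate of $[A,B]^{-1}$ by $x_{2}^{-1}$. The paper takes $\psi=k_{2}^{-1}\pi_{3,2}$, so $\psi(x_{1})=x_{1}$, $\psi(x_{2})=x_{2}^{-1}$ and $\psi(x_{3})=x_{3}x_{2}^{-1}$. Then $\psi(A)=[x_{1},x_{2}^{-1}]=(A^{-1})^{x_{2}^{-1}}$ and $\psi(B)=[x_{1},x_{2}^{-1}]\,B^{x_{2}^{-1}}$. The first factor of $\psi(B)$ drops out of $[\psi(A),\psi(B)]$ because $[u,u]=1$. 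Lemma \ref{leem2} then gives $\psi([A,B])=\bigl([A,B]^{x_{2}^{-1}}\bigr)^{-1}=[A,B]^{-1}[A,B,x_{2}]$. Since $\psi w\psi^{-1}$ fixes $x_{2},x_{3}$ and $w$ fixes $G_{3}^{\prime\prime}$ pointwise, $w\,\psi w\psi^{-1}(x_{1})=x_{1}[A,B][A,B]^{-1}[A,B,x_{2}]=x_{1}[A,B,x_{2}]$. So $w\,k_{2}^{-1}\pi_{3,2}w\pi_{3,2}^{-1}k_{2}=\alpha_{2,(0,0,0)}$ exactly. The sign flip from $k_{2}$ cancels the non-central part against $w$, and the conjugation by $x_{2}^{-1}$ leaves the central term. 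Nothing at the level of ${\rm Aut}(C_{3})$ remains to be cancelled.
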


\begin{proof}
It suffices to show that $\alpha_{2, (0, 0, 0)} \in \langle T_{3}, w \rangle$. We claim that $wk^{-1}_{2}\pi_{3,2}w\pi^{-1}_{3,2}k_{2} = \alpha_{2, (0, 0, 0)}$. Observe that $wk^{-1}_{2}\pi_{3,2}w\pi^{-1}_{3,2}k_{2}(x_{j}) = x_{j} = \alpha_{2, (0, 0, 0)}(x_{j})$ for $j = 2, 3$. Thus, to prove our claim, it is enough to verify that $wk^{-1}_{2}\pi_{3,2}w\pi^{-1}_{3,2}k_{2}(x_{1}) = \alpha_{2, (0, 0, 0)}(x_{1})$. By direct calculations, we get
\begin{equation*}
k^{-1}_{2}\pi_{3,2}w\pi^{-1}_{3,2}k_{2}(x_{1}) = x_{1}[[x_{1},x^{-1}_{2}],[x_{1},x_{3}x^{-1}_{2}]].
\end{equation*}
By the group commutator identity $[a, bc] = [a, c][a, b]^{c}$ and by Lemma \ref{leem2} (3), 
\begin{equation*}
[[x_{1}, x^{-1}_{2}],[x_{1},x_{3}x^{-1}_{2}]] = [[x_{1}, x^{-1}_{2}],[x_{1}, x^{-1}_{2}][x_{1}, x_{3}]^{x^{-1}_{2}}] = [[x_{1}, x^{-1}_{2}],[x_{1}, x_{3}]^{x^{-1}_{2}}].
\end{equation*}
Hence, by our notation, by the group commutator identity $[a, b^{-1}] = ([a, b]^{-1})^{b^{-1}}$, by Lemma \ref{leem2} (1) and by Lemma \ref{leem2} (8), we get
\begin{equation*}
\begin{split}
[[x_{1}, x^{-1}_{2}],[x_{1}, x_{3}x^{-1}_{2}]] &= [[x_{1}, x^{-1}_{2}],[x_{1}, x_{3}]^{x^{-1}_{2}}] = [(A^{-1})^{x^{-1}_{2}}, B^{x^{-1}_{2}}] = [A^{x^{-1}_{2}}, B^{x^{-1}_{2}}]^{-1}\\&= ([A, B][A, B, x^{-1}_{2}])^{-1} = [A, B]^{-1}[A, B, x^{-1}_{2}]^{-1}\\ &= [A, B]^{-1}[A, B, x_{2}].
\end{split}
\end{equation*}
Hence, $k^{-1}_{2}\pi_{3,2}w\pi^{-1}_{3,2}k_{2}(x_{1}) = x_{1}[A, B]^{-1}[A, B, x_{2}]$. Therefore, it follows directly that 
\begin{equation*}
wk^{-1}_{2}\pi_{3,2}w\pi^{-1}_{3,2}k_{2}(x_{1}) = x_{1}[A, B, x_{2}] = \alpha_{2, (0, 0, 0)}(x_{1}). 
\end{equation*}
Hence, our claim is true and thus, $\alpha_{2, (0, 0, 0)} \in \langle T_{3}, w \rangle$.
\end{proof}

\begin{proof}[Proof of Theorem \ref{the}]
Since, by Proposition \ref{propo5}, $H_{3} = \langle \alpha_{1, (0, 0, 0)}, \alpha_{2, (0, 0, 0)}\rangle^{T_{3}}$, we get $\langle T_{3}, \mu, w \rangle H_{3} = \langle T_{3}, \mu, w, \alpha_{1, (0, 0, 0)}, \alpha_{2, (0, 0, 0)}\rangle$. Since, by Proposition \ref{propo4}, $\langle T_{3}, \mu, w\rangle H_{3}$ is dense in ${\rm Aut}(G_{3})$, it follows that $\langle T_{3}, \mu, w, \alpha_{1, (0, 0, 0)}, \alpha_{2, (0, 0, 0)}\rangle$ is dense in ${\rm Aut}(G_{3})$. Furthermore, by our notation, $\alpha_{1, (0, 0, 0)} = \alpha$, and by Lemma \ref{leem10}, $\langle T_{3}, w, \alpha_{2, (0, 0, 0)}\rangle = \langle T_{3}, w \rangle$. Therefore, $\langle T_{3}, \mu, w, \alpha_{1, (0, 0, 0)}, \alpha_{2, (0, 0, 0)}\rangle = \langle T_{3}, \mu, w, \alpha\rangle$, and thus, the subgroup $\langle T_{3}, \mu, w, \alpha\rangle$ is dense in ${\rm Aut}(G_{3})$. Since, by Lemma \ref{leem1} (2)-(3), there is a group epimorphism from ${\rm Aut}(G_{3})$ onto ${\rm Aut}(M_{3})$ and, by \cite{bamu}, ${\rm Aut}(M_{3})$ is not finitely generated, it follows that $\langle T_{3}, w, \mu, \alpha\rangle$ is a proper subgroup of ${\rm Aut}(G_{3})$.
\end{proof}

\end{document}